\documentclass[12pt,twoside,english,reqno,a4paper,final]{amsart}
\usepackage[utf8]{inputenc}
\usepackage[english]{babel}
\usepackage{cancel}
\usepackage[lined,boxed]{algorithm2e}
\usepackage{float} 

\usepackage{hyperref}
\usepackage{fullpage}

\usepackage{listings,graphicx,amsmath,varioref,amscd,amssymb,color,xcolor,bm,amsthm,amsfonts,graphics,lineno,float,comment}
\usepackage{mathrsfs} 

\setlength{\parindent}{0pt}

\theoremstyle{plain}
\newtheorem{theorem}{Theorem}[section]

\newtheorem{lemma}[theorem]{Lemma}

\newtheorem{Corollary}[theorem]{Corollary}

\theoremstyle{definition}
\newtheorem{defin}[theorem]{Definition}

\newtheorem{remark}[theorem]{Remark}

\theoremstyle{remark}


\numberwithin{equation}{section}
\setcounter{tocdepth}{1}


\def\supp{\text{\text{supp}}}

\newcommand{\car}[1]{\raise1pt\hbox{$\chi$}_{#1}}

\definecolor{sap}{RGB}{120,36,51}
\def\R{\mathbb{R}}
\def\N{\mathbb{N}}

\def\RN{\mathbb{R}^{N}}

\def\sob{W^{1,p}_{0}(\Omega)}
\def\linf{L^{\infty}(\Omega)}

\def\lp'n{(L^{p'}(\Omega))^{N}}

\def\car#1{\chi_{_{{#1}}}}

\def\norma#1#2{\|#1\|_{\lower 4pt \hbox{$ \scriptstyle #2$ }}}

\author[R. Durastanti]{Riccardo Durastanti}

\author[F. Oliva]{Francescantonio Oliva}

\address[R. Durastanti]{Dipartimento di Matematica e Applicazioni ``R. Caccioppoli'', Universit\`a di Napoli Federico II, Via Cintia, Monte S. Angelo, 80126 Napoli, Italy
	\\ riccardo.durastanti@unina.it}

\address[F. Oliva]{Dipartimento di Scienze di Base e Applicate per l'Ingegneria, Sapienza Università di Roma, Via Antonio Scarpa 16, 00161 Roma, Italy
\\ francescantonio.oliva@uniroma1.it}

\keywords{Nonlinear elliptic equations, Singular lower order term, Noncoercive operator, Entropy solutions, Uniqueness, Renormalized solutions} \subjclass[2010]{35J25, 35J60, 35J70,  35J75, 35A01, 35A02}

\begin{document}

\title{The Dirichlet problem for possibly singular elliptic equations with degenerate coercivity}

\begin{abstract}
We deal with existence, uniqueness and regularity of nonnegative solutions to a Dirichlet problem for equations as
	\begin{equation*}
\displaystyle -\operatorname{div}\left(\frac{|\nabla u|^{p-2}\nabla u}{(1+u)^{\theta(p-1)}}\right) = h(u)f  \quad\text{in }\Omega,
	\end{equation*}
where $\Omega$ is an open bounded subset of $\mathbb{R}^N$ ($N\ge 2$), $p>1$, $\theta\ge 0$, $f\geq 0$ belongs to a suitable Lebesgue space and $h$ is a continuous, nonnegative function which may blow up at zero and it is bounded at infinity.
\end{abstract}

\maketitle

\tableofcontents

\section{Introduction}

In this paper we study existence, uniqueness and regularity of solutions to problems as
\begin{equation}
	\begin{cases}
		\displaystyle -\operatorname{div}\left(\frac{a(x,\nabla u)}{(1+u)^{\theta(p-1)}}\right) = h(u)f & \text{in}\ \Omega, \\
		u\ge 0  & \text{in}\ \Omega,\\
		u=0 &\text{on}\ \partial\Omega,
	\end{cases}
	\label{pbintro}
\end{equation}
where $\Omega$ is an open bounded subset of $\mathbb{R}^N$ ($N\geq 2$),  $a(x,\xi)$ is a nonlinear Carath\'eodory function satisfying the standard Leray-Lions assumptions and which can be modelled by $a(x,\xi)=|\xi|^{p-2}\xi$ in the simplest case ($1<p<N$), $\theta\ge0$ and  $f$ is a nonnegative function which can be merely integrable. Finally $h(s)$ is a continuous, nonnegative function which behaves as  $s^{-\gamma_1}$ near zero and as $s^{-\gamma_2}$ at infinity with $\gamma_1,\gamma_2\ge 0$.

\medskip

At first sight problem \eqref{pbintro} presents some mathematical peculiarities which deserve to be pointed out. Firstly let observe that \eqref{pbintro} could be singular in the following sense: the solution is required to be zero on the boundary of the domain but, simultaneously, the right-hand side of \eqref{pbintro} could blow up.
\\
Another important feature is the lack of coercivity for positive $\theta$ of the operator $$\operatorname{div}\left(\frac{a(x,\nabla u)}{(1+u)^{\theta(p-1)}}\right)$$ which acts between $W^{1,p}_0(\Omega)$ and $W^{-1,p'}(\Omega)$. Just to give an idea, let us assume for a moment that $a(x,\xi)= |\xi|^{p-2}\xi$, $h(s)f\equiv 1$ and let us formally multiply the equation in \eqref{pbintro} by $u$. Then one simply gets that $|\nabla u|^p(1+u)^{-\theta(p-1)}$ is integrable which, \textit{a priori}, does not give clear informations on the solution itself for large values. \\
Hence, the above mentioned features do not allow to apply standard existence and uniqueness theorems for solutions to \eqref{pbintro}.

\medskip

The literature involving problems similar to \eqref{pbintro} is endless; here we limit ourselves to give a very brief description of some papers which mostly influenced us.

\medskip

Problem \eqref{pbintro} in the coercive case (i.e. $\theta=0$) has been extensively studied in the past. In presence of the Laplace operator and when $h(s) = s^{-\gamma}$ ($\gamma>0$) the first interesting researches  come from the pioneering works \cite{crt, lm, stuart}. Here existence and uniqueness of classical solutions is shown for smooth enough $f$'s. When $f$ is a function in a Lebesgue space (or a measure) and $h(s)=s^{-\gamma}$, we mention \cite{bo} where the existence of a distributional solution is proved along with the regularizing effect given by the fact that $h$ goes to zero at infinity; just to give an idea, if $\gamma=1$ and one formally multiply the equation in \eqref{pbintro} by $u$ ($p=2$), then the solution belongs to $H^1_0(\Omega)$ even for an $L^1$-datum $f$. Another issue is that, in case $\gamma>1$, the solution belongs only locally to $H^1(\Omega)$ and the boundary datum is given as a suitable power of the solution having zero Sobolev trace. When the operator is nonlinear with classical Leray-Lions structure and $h$ is a general not necessarily monotone function one can refer to various works \cite{ddo,do,op,op2}. In particular, among other things, it is proven the existence of a distributional solution under the assumption that $f$ is just a Radon measure.

\smallskip

On the other side uniqueness is a more subtle theme; a natural request, which we assume in the next few lines, is that $h$ is  nonincreasing. For our scope, we also limit the presentation to the case of solutions with zero Sobolev trace, which (in general) are the ones found in case  $\gamma_1\le 1$ and which is mainly the content of the present paper.  Firstly we highlight that distributional solutions with $W^{1,p}$-finite energy are always unique; this is shown in various papers and it relies on an extension argument of the set of test functions and on a classical comparison technique (see for instance \cite{boca,o,durOl}).
On the other hand, explicit examples show that the solutions may have infinite energy if the datum is merely integrable. However, any truncation of the solution has $W^{1,p}$-finite energy. In this respect in \cite{ddo}, if $\gamma_1\le 1$, working in the framework of the (so called) renormalized solutions, it is proven uniqueness of solutions. We also highlight that uniqueness of distributional solutions is proved in \cite{op2} for linear operator. Finally we quote \cite{cst} where the authors show uniqueness of solutions in presence of the $p$-Laplace operator and for a sufficiently regular function $f$.

\smallskip

For more and different aspects concerning singular problems we refer to \cite{am,bct,car,diaz,dur,DG,fs,gmm,gg,locsc,sz}.

\medskip

Let us now briefly summarize some known results in the noncoercive case (i.e. $\theta>0$) when $h\equiv 1$.
When $f$ is a measurable function with suitable integrable properties we refer to \cite{abfo} for the existence of solutions. The result is obtained by means of approximation through suitable coercive problems. In particular, the authors show that if $f\in L^1(\Omega)$ a solution exists as long as $\theta<1$. For $\theta=1$ the situation is more tricky: a bounded and finite energy solution to \eqref{pbintro} is proven to exist if $f\in L^m(\Omega)$ with $m>\frac{N}{p}$. When $\theta>1$ nonexistence of solutions is shown for datum $f$ with norm large enough. Let us also mention that in \cite{b}, the author proves existence of a solution when $\theta=1$ and $f\in L^{\frac{N}{p}}(\Omega)$. \\
A particular attention needs to be given to the notion of solution employed in these works. Indeed, for functions $f$ merely integrable and $\theta$ sufficiently near to one, then, in general, $|\nabla u|^{p-1}$ is not locally integrable. This fact, which can be confirmed by explicit examples, does not permit to consider distributional solutions to \eqref{pbintro}. As we will see below, the right framework to work in is the entropy one which heavily relies on properties of any truncation of the solution.

\smallskip

As expected uniqueness in this noncoercive case is more delicate and the literature is more limited. Again for $h\equiv 1$ we mainly refer to \cite{p} where the author
proves uniqueness of entropy solutions under stronger assumptions on $a$ and once that $f\in L^m(\Omega)$ with
\begin{equation}
\label{exppor}
m\ge \frac{N(2-\theta)}{2+N(1-\theta)}.
\end{equation}
It is worth underlining that the previous uniqueness result could be extended to the case of a continuous and bounded function $h(s)$, which is nonincreasing in $s$. Finally we also quote \cite{p2} where the author shows uniqueness of entropy solutions to \eqref{pbintro} in case $a(x,\nabla u)=a(x)\nabla u$ and $h\equiv 1$ with $f\in L^1(\Omega)$, thanks to a suitable change of variable, which, however, is not available under our assumptions.

\smallskip

For more concerning problems with degenerate coercivity we refer to \cite{abo,blgui,bg, bb, bdo,  dellapietra, dellapdibl, fgmz, gp, huang,  smarrazzo, sou, trombetti,zl}.

\medskip

In this work we deal with problems as in \eqref{pbintro} possibly in presence of both a noncoercive principal operator and a general lower order term; in particular the function $h$ may be singular and without any monotonicity property. In this case, to the best of our knowledge, there are no results in literature about existence and uniqueness of solutions. Our aim is to extend and improve both the existence and uniqueness results listed above.

\smallskip

The existence of a solution is obtained by the means of an approximation process to \eqref{pbintro}. As one can image, the result follows by unifying truncation techniques typical of noncoercive operators with methods employed in dealing with functions possibly blowing up at the origin. In this respect we will prove the existence of an entropy solution if $\gamma_1\le 1$ (see Theorem \ref{teo_ent}). \\
Firstly we are interested into the regularizing effect given by the behaviour of $h$ at infinity (i.e. $\gamma_2$).
Indeed we obtain improvements over the $\theta$'s range (i.e. $0\leq \theta\leq 1$) for which existence of solutions is known: in particular an entropy solution exists for any $0\leq \theta\leq 1+\frac{\gamma_2}{p-1}$. An interesting fact to be pointed out for future developments is that, once $\gamma_2>0$, the threshold depends both on $\gamma_2$ and $p$ and it blows up as $p\to 1^+$. Moreover, another regularizing effect we get is that, if $h$ touches $0$ in some point, then an entropy bounded solution exists without any restriction on $\theta$ (see Section \ref{s8}). \\
As already mentioned, the second interesting aspect is providing existence for a very general function $h$, which possibly blows up at the origin. On this matter, one needs to take care of the zone where the solution is small. It is also worth mentioning that existence of solutions to \eqref{pbintro} in the coercive case (i.e. $\theta=0$) is often recovered by an approximation argument jointly to  an application of the maximum principle which assures that the solutions are far away from zero. When $\theta>0$ one needs to treat the zone where the solution is small in a different way and through the help of suitable test functions avoiding the use of the maximum principle which, in this generality, is not always applicable.

\medskip

Besides existence, we investigate the regularity of entropy solutions in dependence of the summability of the datum $f$. The discussion on regularity is not only interesting in itself, but, since the properties proven are for any entropy solution and not just for those obtained by approximation, this is fundamental for the proof of the uniqueness result.\\
The technique behind the uniqueness strongly relies on using the right-hand side of the equation in \eqref{pbintro}. In particular, if $\theta\leq 1+\frac{\gamma_2}{p-1}$, there is at most an entropy solution of \eqref{pbintro} under the assumptions that $h$ is a decreasing function and $f\in L^m(\Omega)$ is positive almost everywhere in $\Omega$ with
$$
m\ge \max\left(\frac{N(p-1)}{(N-p)(\gamma_2-\theta(p-1))+N(p-1)}, 1\right)
$$
(see Theorem \ref{teo_uniqueness}). In case $p=2$ and $\gamma_1=\gamma_2=0$ (i.e. the case for which $h$ may be a continuous and bounded function in $[0,\infty)$ without decaying at infinity), the lower bound on $m$ becomes
$$
m\ge \frac{N}{N-\theta(N-2)}.
$$
This bound on $m$ is lower than the one in \eqref{exppor}; this means that, under the additional assumption that $h$ is decreasing and the positiveness of $f$, but under weaker assumptions on $a$, one gets uniqueness for less regular $f$'s. \\ Moreover we underline that this uniqueness result holds replacing $(1+u)^{-\theta(p-1)}$ in \eqref{pbintro} with a Lipschitz function $b(u)$ and does not depend on the singularity of $h$ at 0 (i.e. $\gamma_1$).

\medskip

Finally we exploit the connection between the entropy solution and the distributional one, proving that, whenever $f$ satisfies a certain summability request, an entropy solution is also a distributional one (see Lemma \ref{entdis}). At the end of the paper we give a radial example which confirms the optimality of this request on $f$. Furthermore we briefly comment the existence of a distributional solution for the strongly singular case (i.e. $\gamma_1>1$).

\medskip

The plan of the paper is as follows. In Section \ref{s2} we fix the notations and the mathematical preliminaries used throughout the work. In Section \ref{s3} we precise the mathematical problem and we state the main results. In Section \ref{s4} we prove existence of an entropy solution. In Section \ref{sec_reg} we show the regularity of any entropy solution. In Section \ref{s5} we deal with the uniqueness. In Section \ref{s7} we study the connection between entropy, renormalized and distributional solutions and in Section \ref{s8} we conclude with some remarks and examples.

\section{Notations and preliminaries}
\label{s2}

In the entire paper $\Omega$ is an open and bounded subset of $\RN$, with $N\geq 2$. We denote by $\partial A$ the boundary and by $|A|$ the Lebesgue measure of a measurable subset $A$ of $\RN$. By $C^1_c(\Omega)$ we mean the space of $C^1$ functions with compact support in $\Omega$.

\smallskip

For any $1\leq p<N$, $p^*=\frac{Np}{N-p}$ is the Sobolev  conjugate exponent of $p$, and $\mathcal{S}$ is the best constant in the Sobolev inequality for functions in $W^{1,p}_0(\Omega)$.

\smallskip

We denote by $\chi_E$ the characteristic function of $E\subset\Omega$, namely
$$
\chi_E(x)=
\begin{cases}
1 & x\in E,\\
0 & x\in \Omega\setminus E,
\end{cases}
$$
and by $f^+:=\max(f,0), f^-:= -\min(f,0)$ the positive and the negative part of a function $f$.

\smallskip

For a fixed $k>0$ and $s\in\R$, we introduce the following truncation functions
\begin{equation}\label{defTk}
T_k(s)=\max (-k,\min (s,k)),
\end{equation}
\begin{equation}\label{defGk}
G_k(s)=(|s|-k)^+ \operatorname{sign}(s),
\end{equation}
and
\begin{equation}
\label{Vdelta}
\displaystyle
V_{k}(s)=
\begin{cases}
1 \ \ &|s|\le k, \\
\displaystyle\frac{2k-s}{k} \ \ &k <|s|< 2k, \\
0 \ \ &|s|\ge 2k,
\end{cases}
\end{equation}
which will be widely used in the sequel both with $k \to 0^+$ and $k\to \infty$.

\smallskip

For the sake of completeness, we recall two well-known inequalities which will be useful in what follows:
\begin{itemize}
\item[$(i)$] For any $\underline x>0$ and $t>0$ there exists a positive constant $\underline{c}$ dependent only on $\underline x, t$ such that
\begin{equation}
\label{as1}
(1+x)^t-1 \leq \underline c x \qquad\forall x\in [0,\underline x];
\end{equation}
\item[$(ii)$] For any $\overline x>0$, $a\geq 0$, $b\geq 0$ and $t>0$ there exists a positive constant $\overline c$ dependent only on $\overline x, a, b, t$ such that
\begin{equation}
\label{as3}
(a+bx)^t \leq \overline c x^t \qquad\forall x\in [\overline x,\infty).
\end{equation}
\end{itemize}

\medskip

Since we will deal with functions $u$ which not necessarily belong to  $W^{1,1}_{\rm loc}(\Omega)$, we clarify the meaning of $\nabla u$.
\begin{lemma}[Lemma $2.1$ of \cite{BBGGPV}]
\label{dalmaso2}
Let $u:\Omega\to\R$ be a measurable function such that $T_k(u)\in W^{1,1}_{\rm loc}(\Omega)$ for every $k>0$. Then there exists a unique measurable function $v:\Omega\to\R^N$ such that
$$
\nabla T_k(u)=v\chi_{\{|u|\leq k\}} \quad\text{for a.e. }x\in\Omega \text{ and for every } k>0.
$$
Furthermore, $u\in W^{1,1}_{\rm loc}(\Omega)$ if and only if $v\in L^1_{\rm loc}(\Omega)$, and then $v= \nabla u$ in the usual distributional sense.
\end{lemma}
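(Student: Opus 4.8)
The plan is to construct $v$ by a patching argument across truncation levels and then to identify it with the distributional gradient when it happens to be locally integrable. First I would record the only external ingredient needed: Stampacchia's chain rule, which says that for any $w\in W^{1,1}_{\rm loc}(\Omega)$ and any $k>0$ one has $\nabla T_k(w)=\chi_{\{|w|<k\}}\nabla w$ a.e., and in particular $\nabla w=0$ a.e. on every level set $\{w=c\}$. Applying this with $w=T_h(u)$ (admissible by hypothesis) and using $T_k(u)=T_k(T_h(u))$ for $h\ge k$ together with $\{|T_h(u)|<k\}=\{|u|<k\}$, one gets the consistency relation
\[
\nabla T_k(u)=\chi_{\{|u|<k\}}\,\nabla T_h(u)\qquad\text{a.e. in }\Omega,\quad h\ge k.
\]

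I would then \emph{define} $v$ on $\Omega$ by setting $v:=\nabla T_h(u)$ on $\{|u|<h\}$; since $u$ is real-valued, $\Omega=\bigcup_{h>0}\{|u|<h\}$, and the displayed relation shows that two such choices agree a.e. on overlaps, so $v$ is a well-defined measurable function. To check $\nabla T_k(u)=v\,\chi_{\{|u|\le k\}}$ it only remains to handle $\{|u|=k\}$: there $T_h(u)$ equals the constant $k$ (resp. $-k$) a.e. for $h>k$, hence $v=\nabla T_h(u)=0$ a.e. on $\{|u|=k\}$, and likewise $\nabla T_k(u)=0$ a.e. there, so the identity with $\chi_{\{|u|\le k\}}$ holds (and coincides with the one with $\chi_{\{|u|<k\}}$). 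Uniqueness is immediate: if $v'$ satisfies the same identities then $v=v'$ a.e. on each $\{|u|\le k\}$, hence a.e. on $\Omega$.

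For the equivalence, the direction $u\in W^{1,1}_{\rm loc}(\Omega)\Rightarrow v\in L^1_{\rm loc}(\Omega)$ is easy: the chain rule gives $\nabla T_k(u)=\chi_{\{|u|<k\}}\nabla u$, so $\nabla u$ satisfies the characterising property of $v$ and hence $v=\nabla u\in L^1_{\rm loc}(\Omega)$ by uniqueness. For the converse, assume $v\in L^1_{\rm loc}(\Omega)$ and fix a ball $B$ with $\overline B\subset\Omega$. From the first part, $\|\nabla T_k(u)\|_{L^1(B)}\le\|v\|_{L^1(B)}$ uniformly in $k$, so the Poincaré–Wirtinger inequality bounds $\|T_k(u)-\langle T_k(u)\rangle_B\|_{L^1(B)}$ uniformly, where $\langle\cdot\rangle_B$ is the average. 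The one genuine point is to bound the averages $\langle T_k(u)\rangle_B$: this follows from a Chebyshev argument combined with the fact that, $u$ being finite a.e., $|\{|u|\le R\}\cap B|\to|B|$ as $R\to\infty$, so $|T_k(u)|$ cannot be large on a set of measure close to $|B|$. Hence $\{T_k(u)\}_k$ is bounded in $L^1(B)$, and since $T_k(u)\to u$ pointwise, Fatou gives $u\in L^1(B)$; as $B$ is arbitrary, $u\in L^1_{\rm loc}(\Omega)$. Finally, letting $k\to\infty$ in $\int_\Omega T_k(u)\,\partial_i\varphi=-\int_\Omega v_i\,\chi_{\{|u|\le k\}}\,\varphi$ for $\varphi\in C^1_c(\Omega)$, by dominated convergence (dominating functions $|u||\nabla\varphi|$ and $|v||\varphi|$), yields $\int_\Omega u\,\partial_i\varphi=-\int_\Omega v_i\,\varphi$, i.e. $\nabla u=v$ distributionally.

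The main obstacle is precisely the a priori bound in the converse direction: one is \emph{not} allowed to assume $u\in L^1_{\rm loc}(\Omega)$, so the uniform $L^1$-bound on the truncations must be extracted from the gradient bound alone, which is where the Poincaré–Wirtinger inequality and the finiteness of $u$ enter; everything else reduces to the chain rule and dominated convergence.
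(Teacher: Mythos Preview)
The paper does not give its own proof of this lemma: it is quoted verbatim as Lemma~2.1 of \cite{BBGGPV} and used as a black box, so there is nothing in the present paper to compare your argument against.

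That said, your argument is correct and is essentially the standard one from \cite{BBGGPV}. The construction of $v$ via the compatibility relation $\nabla T_k(u)=\chi_{\{|u|<k\}}\nabla T_h(u)$ for $h\ge k$ (which you obtain from the chain rule applied to $T_h(u)\in W^{1,1}_{\rm loc}(\Omega)$) is exactly the patching procedure used there, and your treatment of the level sets $\{|u|=k\}$ and of uniqueness is clean. For the converse implication, the combination of a uniform $L^1$-bound on $\nabla T_k(u)$ with Poincar\'e--Wirtinger to control $T_k(u)-\langle T_k(u)\rangle_B$, together with the Chebyshev-type bound on the averages exploiting the a.e.\ finiteness of $u$, is the right substitute for the missing a priori bound on $u$; once $u\in L^1_{\rm loc}(\Omega)$ is secured, passing to the limit in the distributional identity by dominated convergence is routine. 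Your proof would be accepted as a self-contained justification of the cited lemma.
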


From now onwards, when dealing to $\nabla u$ of a function $u$ such that $T_k(u)\in W^{1,1}_{\rm loc}(\Omega)$ for every $k>0$, we refer to the function $v$ given by Lemma \ref{dalmaso2}. We note that $v$ may not be a locally integrable function, and so, in this case, $v$ is not the derivative in distributional sense of $u$.

\smallskip

We recall the definition of Marcinkiewicz space $M^s(\Omega)$: a function $u: \Omega \to \R$ belongs to $M^s(\Omega)$, with $s>0$, if there exists a positive constant $c$ such that
\begin{equation}
\label{marc-eq}
|\{x\in \Omega : |u(x)| > \lambda\}|\leq \frac{c}{\lambda^s} \quad \forall \lambda >0,
\end{equation}
and, then, we define
$$
\displaystyle \|u\|_{M^s(\Omega)}:=\left(\inf\{ c>0 : \eqref{marc-eq} \text{ holds}\}\right)^{\frac{1}{s}}.
$$
We recall that $L^1(\Omega)\subset M^1(\Omega)$ and $L^{s}(\Omega)\subset M^{s}(\Omega) \subset L^{s-\varepsilon}(\Omega)$ for every $s>1$ and $0<\varepsilon \leq s-1$.

\smallskip

If no otherwise specified, we will denote by $c,C$ several constants whose value may change from line to line. These values will only depend on the data (for instance $c,C$ may depend on $\Omega$, $N$ and $p$) but they will never depend on the indexes of the sequences we will often introduce.

\section{Basic assumptions and main results}
\label{s3}

Let $\Omega$ be a bounded open set of $\mathbb{R}^N$ ($N\ge 2$) and let us deal with the following problem
\begin{equation}
\label{pbmain}
\begin{cases}
\displaystyle -\operatorname{div}(a(x,u,\nabla u)) = h(u)f & \text{ in }\Omega,\\
u \ge 0 & \text{ in } \Omega,\\
u=0 & \text{ on } \partial\Omega,
\end{cases}
\end{equation}
where, for $1<p<N$, $\displaystyle{a(x,s,\xi):\Omega\times\mathbb{R}\times\mathbb{R}^{N} \to \mathbb{R}^{N}}$ is a Carath\'eodory function satisfying:
\begin{align}
&a(x,s,\xi)\cdot\xi\ge b(s)|\xi|^{p},
\label{cara1}\\
&|a(x,s,\xi)|\le \beta(\ell(x) + |s|^{p-1}+|\xi|^{p-1}) \quad \text{for some } \beta>0 \text{ and } 0\le \ell\in L^{\frac{p}{p-1}}(\Omega),
\label{cara2}\\
&(a(x,s,\xi) - a(x,s,\xi^{'} )) \cdot (\xi -\xi^{'}) > 0,
\label{cara3}	
\end{align}
for almost every $x$ in $\Omega$, for every $s$ in $\R$ and for every $\xi\neq\xi^{'}$ in $\mathbb{R}^N$ and where $b:\mathbb{R}\to \mathbb{R}$ is a continuous and bounded function such that
\begin{equation}\label{b}
b(s) \geq \frac{\alpha}{(1+|s|)^{\theta(p-1)}}  \quad \text{for } \alpha>0 \text{ and } \theta\ge 0.
\end{equation}
Here $f$ is nonnegative and it belongs to $L^m(\Omega)$ with $m\ge 1$. The function $h:[0,\infty)\to [0,\infty]$ is continuous and finite outside the origin such that
\begin{equation}
\exists\;\gamma_1 \ge 0, c_1,s_1>0: \  h(s) \le \frac{c_1}{s^{\gamma_1}} \quad \text{if }s\leq s_1,
\label{h1}
\end{equation} 		
and
\begin{equation}\label{h2}
\exists\;\gamma_2\ge 0, c_2,s_2>0: \ h(s) \le \frac{c_2}{s^{\gamma_2}} \quad \text{if } s\geq s_2,
\end{equation}
where $0<s_1<s_2$. Note that from \eqref{h1} and \eqref{h2} and for every $\delta>0$ one always has that $h\in L^{\infty}([\delta,\infty))$ and also that the case of a continuous and everywhere finite function $h$ is well covered by our assumptions.

Let finally observe that if $h(0)=0$ the null function is a trivial solution. Hence in the sequel we may assume $h(0)\not=0$.

\medskip

Firstly we spend a few words concerning the notion of solution to \eqref{pbmain} we adopt. As we will see, our solutions will not always be in the classical sense of the \textit{distributions}. Indeed, this framework may not be the right one if $\theta$ is large enough: the main issue is that, as the datum $f$ is rough, we will not be able to deal with solutions satisfying $a(x,u,\nabla u)\in L^1_{\rm loc}(\Omega)$ (see the radial example in Section \ref{s8}). In particular, as we will see in Section \ref{sec_reg} below, in the limit case $\theta=1+\frac{\gamma_2}{p-1}$ we will not be able to show that the solutions belong to any Marcinkiewicz space. This is consistent with the case $\theta<1+\frac{\gamma_2}{p-1}$, where the solutions always belong to a Marcinkiewicz space whose index degenerates as $\theta \to 1+\frac{\gamma_2}{p-1}$ (cf. Lemma \ref{lem_sum_m=1}). \\
Anyhow, the solution we find is always a measurable and almost everywhere finite function such that any of its truncations has finite energy, which means that the solution needs to be intended in the \textit{entropy} sense. In this setting
we prove existence and, under some additional assumptions on both the operator and the function $h$, uniqueness of solutions to \eqref{pbmain} .

\smallskip

We start giving what we precisely mean by an entropy solution to \eqref{pbmain}.

\begin{defin}
\label{defent}
A nonnegative measurable function $u$, which is almost everywhere finite in $\Omega$, is an \textit{entropy solution} to problem \eqref{pbmain} if $T_k(u) \in W^{1,p}_0(\Omega)$ for every $k>0$ and if
\begin{equation}
\label{ent0}
a(x,T_k(u),\nabla T_k(u)) \in L^{\frac{p}{p-1}}(\Omega)^N,\quad h(u)fT_k(u-\varphi) \in L^1(\Omega),
\end{equation}
and
\begin{equation}
\label{ent}
\int_{\Omega} a(x,u,\nabla u)\cdot\nabla T_k(u-\varphi) \le \int_{\Omega}h(u)fT_k(u-\varphi)
\end{equation}
for every $k>0$ and for any $\varphi \in W^{1,p}_0(\Omega)\cap L^\infty(\Omega)$.
\end{defin}

\begin{remark}
The left-hand side of \eqref{ent} is well-defined and finite. Indeed, $\nabla T_k(u-\varphi)$ is not null only on the set $\{|u-\varphi|<k\}$, and here one has $|u|< \|\varphi \|_{L^\infty(\Omega)} + k=: M$. Hence, since $T_M(u)\in \sob$, we deduce $a(x,T_M(u),\nabla T_M(u))\in L^{\frac{p}{p-1}}(\Omega)^N$ and $\nabla T_k(T_M(u)-\varphi)\in L^p(\Omega)^N$, and so
$$
\int_{\Omega} a(x,u,\nabla u)\cdot\nabla T_k(u-\varphi) = \int_{\Omega} a(x,T_M(u),\nabla T_M(u))\cdot\nabla T_k(T_M(u)-\varphi)
$$
is finite. Moreover, thanks to the second request in \eqref{ent0}, the right-hand side of \eqref{ent} is also finite. This request is natural for singular problems; indeed there are cases in which $h(u)f\not \in L^1(\Omega)$ as shown in Example 2 of \cite{op2}.
\end{remark}

Note also that we give the notion of entropy solution with the inequality sign in \eqref{ent} just for historical reasons (see \cite{BBGGPV}). Indeed, taking $\varphi=T_m(u)+T_k(u-\psi)$ in \eqref{ent} with $\psi\in W^{1,p}_0(\Omega)\cap L^{\infty}(\Omega)$ and $m>k+\|\psi\|_{L^{\infty}(\Omega)}$, using that $T_k(u-T_m(u)-T_k(u-\psi))=T_{2k}(u-T_m(u))- T_k(u-\psi)$ and letting $m$ tend to infinity it is easy to obtain the reverse inequality in \eqref{ent}. It follows that \eqref{ent} is equivalent to
\begin{equation*}
\int_{\Omega} a(x,u,\nabla u)\cdot\nabla T_k(u-\varphi) = \int_{\Omega}h(u)fT_k(u-\varphi).
\end{equation*}
See also Introduction of \cite{p2} for further details. 

\medskip

It is worth mentioning that a notion of solution which is widely employed to deal with problems as \eqref{pbmain} is the \emph{renormalized solution}. This framework is also suitable to prove uniqueness of solutions even in the noncoercive case (see for instance \cite{blgui}). We prefer not to burden the discussion by introducing the renormalization formulation here. However, in Section \ref{s7}, we precisely set the notion of renormalized solution showing that, under our assumptions, it is equivalent to the entropy one. It follows that our existence, uniqueness and regularity results hold for both entropy and renormalized solutions.

\medskip

We are now ready to state the existence of an entropy solution to \eqref{pbmain} if $\gamma_1\leq 1$ and
\begin{equation}
\label{condent}
0\leq\theta \le 1+\frac{\gamma_2}{p-1}.
\end{equation}

\begin{theorem}
\label{teo_ent}
Let $a$ satisfy \eqref{cara1}, \eqref{cara2}, \eqref{cara3}, \eqref{b} and \eqref{condent}. Let $h$ satisfy \eqref{h1} with $\gamma_1\le 1$, \eqref{h2} and let $f\in L^1(\Omega)$ be nonnegative. Then there exists an \textit{entropy solution} to \eqref{pbmain}.	
\end{theorem}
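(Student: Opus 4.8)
The plan is to obtain the entropy solution as the limit of solutions to a sequence of approximating problems which are both coercive and non-singular, and then to pass to the limit using a priori estimates on the truncations together with an almost-everywhere convergence of the gradients. Concretely, for $n\in\N$ I would set $h_n(s)=T_n(h(s))$ and replace $h(u)f$ by $h_n(u)T_n(f)$ (or $h_n(u)f_n$ with $f_n=\min(f,n)$), and replace $b(s)$ by $b_n(s)=b(T_n(s))$ so that $b_n(s)\ge \alpha(1+n)^{-\theta(p-1)}>0$; this makes the operator $-\operatorname{div}(a_n(x,u,\nabla u))$ with $a_n(x,s,\xi)=\frac{b(T_n(s))}{b(s)}a(x,s,\xi)$ coercive on $W^{1,p}_0(\Omega)$ in the Leray–Lions sense and the right-hand side bounded. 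Existence of a nonnegative weak solution $u_n\in W^{1,p}_0(\Omega)\cap L^\infty(\Omega)$ then follows from classical surjectivity results for pseudo-monotone coercive operators (with nonnegativity via a sign-testing argument, using $h_n\ge 0$ and $f\ge0$); one also checks monotonicity of the scheme is not needed, only uniform estimates.

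Next I would derive the crucial a priori estimates, uniform in $n$. Testing the equation for $u_n$ with $T_k(u_n)$ and using \eqref{cara1} and \eqref{b} gives
\begin{equation*}
\alpha\int_{\{u_n\le k\}}\frac{|\nabla T_k(u_n)|^p}{(1+u_n)^{\theta(p-1)}}\le \int_\Omega h_n(u_n)f_n\,T_k(u_n)\le k\|h(u)\|_{\text{on the relevant set}}\,\|f\|_{\luno}+\cdots,
\end{equation*}
but the singular behaviour near $\{u_n=0\}$ and the lack of coercivity for large $u_n$ both require care. For the singular zone, since $\gamma_1\le 1$, I would test with truncations like $V_k(u_n)$ or $(1+u_n)^{\lambda}-1$ type functions and exploit inequality \eqref{as1} to absorb $h(u_n)\le c_1 u_n^{-\gamma_1}$ against a power of $u_n$ that the gradient term produces; this is the step where the maximum principle is replaced by a tailored test function, as the introduction emphasizes. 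For the large-$u$ zone, the standard Boccardo–Gallouët scheme combined with the regularizing decay $h(u_n)\le c_2 u_n^{-\gamma_2}$ at infinity yields, via testing with $((1+u_n)^{\mu}-1)$ for a suitable $\mu>0$ tied to $\theta$ and $\gamma_2$, that $(1+u_n)^{-\theta(p-1)}|\nabla T_k(u_n)|^p$ and in fact a global quantity like $|\nabla((1+u_n)^{\frac{p-1-\theta(p-1)+\gamma_2}{p}})|^p$ is bounded; condition \eqref{condent} is exactly what makes the relevant exponent nonnegative so the estimate is meaningful. From these one extracts: $u_n$ bounded in some Marcinkiewicz space (Lemma~\ref{lem_sum_m=1}), $T_k(u_n)$ bounded in $W^{1,p}_0(\Omega)$ for each $k$, hence (up to subsequence) $u_n\to u$ a.e. with $u$ finite a.e., and $T_k(u_n)\rightharpoonup T_k(u)$ weakly in $W^{1,p}_0(\Omega)$.

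Then I would upgrade weak to strong convergence of the truncated gradients: using the classical Boccardo–Murat argument with test function $T_\delta(u_n-T_k(u))$ and the strict monotonicity \eqref{cara3}, one shows $\nabla u_n\to\nabla u$ a.e. in $\Omega$, which also gives $a(x,T_k(u_n),\nabla T_k(u_n))\to a(x,T_k(u),\nabla T_k(u))$ in $L^{p'}(\Omega)^N$ for each fixed $k$. The remaining work is the passage to the limit in the entropy inequality \eqref{ent}: fix $\varphi\in W^{1,p}_0(\Omega)\cap\linf$, write $M=\|\varphi\|_{\linf}+k$, note $\nabla T_k(u_n-\varphi)$ is supported in $\{u_n<M\}$ so the left-hand side only sees $T_M(u_n)$, pass to the limit there by the strong $L^{p'}$ convergence, and for the right-hand side use that $h_n(u_n)f_n\,T_k(u_n-\varphi)\to h(u)f\,T_k(u-\varphi)$ with a uniform integrable bound coming from the singular-zone estimate (for the set where $u$ is small one controls $h(u)f T_k(u-\varphi)$ by Fatou plus the uniform bound obtained from the test functions above). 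One must also verify $h(u)fT_k(u-\varphi)\in L^1(\Omega)$ and the two conditions in \eqref{ent0}; the first follows from the $n$-uniform bounds, the second is immediate since $T_k(u)\in W^{1,p}_0(\Omega)$ and \eqref{cara2} holds. I expect the main obstacle to be the uniform control of $h(u_n)f$ on the region $\{u_n\approx 0\}$ when $\gamma_1$ is close to $1$ — choosing the right test function (something like $\left((1+\eps)^{?}-(1+u_n)^{?}\right)^+/\eps$ or a power-type cut-off near zero) so that the singular term is absorbed by the degenerate-coercivity gradient term is the delicate point, and keeping every estimate independent of $n$ while $\theta$ sits at the endpoint $1+\frac{\gamma_2}{p-1}$ requires tracking the exponents carefully.
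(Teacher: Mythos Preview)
Your proposal follows essentially the same approximation-and-compactness strategy as the paper, and most of the steps (the scheme with $h_n=T_n(h)$, $f_n=T_n(f)$ and truncated coercivity; the $T_k(u_n)$ bound; the Boccardo--Murat argument for a.e.\ convergence of gradients; the passage to the limit in the entropy inequality with $M=\|\varphi\|_{L^\infty}+k$) match well.

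There is, however, one concrete gap. Your power-type test function $(1+u_n)^{\mu}-1$ with $\mu=\frac{(p-1)(1-\theta)+\gamma_2}{p}$ degenerates to the zero function at the endpoint $\theta=1+\frac{\gamma_2}{p-1}$, so it yields no information there; consequently no Marcinkiewicz bound on $u_n$ is available in that case (and indeed none holds in general, as the paper itself stresses). The paper's fix is to test with $\log(1+T_k(u_n))\,(1+u_n)^{\gamma_2}$: under \eqref{condent} one has $\theta(p-1)+1-\gamma_2\le p$, so the left side controls $\int_\Omega|\nabla\log(1+T_k(u_n))|^p$, while the right side is bounded by $C\|f\|_{L^1}\log(1+k)$ thanks to \eqref{h1}, \eqref{h2} and $\gamma_1\le1$. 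Sobolev then gives only the logarithmic decay
\[
|\{u_n\ge k\}|\le \frac{C}{(\log(1+k))^{\frac{N(p-1)}{N-p}}},
\]
which is weaker than Marcinkiewicz but still suffices to show $u_n$ is Cauchy in measure and its a.e.\ limit $u$ is finite. You flagged the endpoint as the delicate spot; this logarithmic test function is the missing ingredient.

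For the singular zone $\{u_n\approx0\}$, the paper does not use an $\eps$-power cut-off as you suggest but rather the piecewise-linear $V_\delta(u_n)\varphi$ from \eqref{Vdelta}: testing with it gives $\int_{\{u_n\le\delta\}}h_n(u_n)f_n\varphi\le\int a(x,u_n,\nabla u_n)\cdot\nabla\varphi\,V_\delta(u_n)$, and after passing $n\to\infty$ and $\delta\to0^+$ the right side vanishes because $a(x,0,0)=0$. This, together with the observation that $h(u)f\in L^1_{\rm loc}$ forces $\{u=0\}\subset\{f=0\}$ when $h(0)=\infty$, is what replaces the maximum principle in the passage to the limit.
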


\begin{remark}
Let us just remark that $\gamma_1\le 1$ is required to find, in general, solutions with null trace in the usual Sobolev sense. This is a classical theme and for more regarding the difference between $\gamma_1\le 1$ and $\gamma_1>1$ in singular problems we quote \cite{bo, ddo, op}. We also refer to Section \ref{s8} for a brief comment concerning the case $\gamma_1>1$. \\
As concerns the threshold on $\theta$ we refer once again to Section \ref{s8} where we briefly treat the case $\theta>1+\frac{\gamma_2}{p-1}$. Let us observe that the appearing threshold is quite natural; we quote \cite{abfo} for a discussion on nonnexistence results when $h\equiv 1$ (i.e. $\gamma_1=\gamma_2=0$) and $\theta>1$.
\end{remark}

In order to prove the uniqueness result we need the following additional assumptions on the principal operator. We assume that $a(x,s,\xi) =\tilde{a}(x,\xi)b(s)$ where $b:\R\to \R$ is a bounded and Lipschitz function satisfying \eqref{b} and where $\displaystyle{\tilde{a}(x,\xi):\Omega\times\mathbb{R}^{N} \to \mathbb{R}^{N}}$ is a Carath\'eodory function satisfying:
\begin{equation}
	\begin{aligned}
\label{caraunique}
&\tilde{a}(x,\xi)\cdot\xi\ge |\xi|^{p},
\\
&|\tilde{a}(x,\xi)|\le \beta(\ell(x) + |\xi|^{p-1}) \quad \text{for some } \beta>0 \text{ and } 0\le \ell\in L^{\frac{p}{p-1}}(\Omega),
\\
&(\tilde{a}(x,\xi) - \tilde{a}(x,\xi^{'} )) \cdot (\xi -\xi^{'}) > 0,
	\end{aligned}
\end{equation}
for almost every $x$ in $\Omega$ and for every $\xi\neq\xi^{'}$ in $\mathbb{R}^N$.
\begin{theorem}\label{teo_uniqueness}
Let $a(x,s,\xi) = \tilde{a}(x,\xi)b(s)$ where $\tilde{a}$ satisfies \eqref{caraunique} and $b$ is a bounded and Lipschitz function satisfying \eqref{b}. Let $h$ be decreasing and satisfying \eqref{h2}. Let $f\in L^m(\Omega)$ be positive almost everywhere in $\Omega$. Then, if $\theta\le \frac{\gamma_2}{p-1}$ and $m=1$, there is at most one entropy solution to \eqref{pbmain}. Moreover, if $\theta > \frac{\gamma_2}{p-1}$ and $m>1$, there is at most one entropy solution $u$ to \eqref{pbmain} such that $u^{\frac{m(\theta(p-1)-\gamma_2)}{m-1}} \in L^1(\Omega)$. In addition, if \eqref{condent} holds and
\begin{equation}\label{condmuni}
\displaystyle m\ge \max\left(\frac{N(p-1)}{(N-p)(\gamma_2-\theta(p-1))+N(p-1)}, 1\right),
\end{equation}
then there is at most one entropy solution to \eqref{pbmain}.
\end{theorem}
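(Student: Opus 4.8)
The plan is to argue by comparison of two entropy solutions, following the classical scheme for entropy/renormalized solutions (cf.\ \cite{BBGGPV,p}); the genuinely new feature to be exploited is that the monotonicity of $h$ together with the strict positivity of $f$ produces the favourable sign of the lower order contribution, which is precisely what lets us dispense with the strong monotonicity conditions on the principal part usually imposed in this setting (see \cite{p}). Let $u,v$ be two entropy solutions of \eqref{pbmain}; when $\theta>\frac{\gamma_2}{p-1}$ and $m>1$ we assume in addition that $u^{\frac{m(\theta(p-1)-\gamma_2)}{m-1}},v^{\frac{m(\theta(p-1)-\gamma_2)}{m-1}}\in L^1(\Omega)$, recording that, if moreover \eqref{condent} and \eqref{condmuni} hold, this integrability is automatic for every entropy solution by the regularity estimates of Section \ref{sec_reg} (Lemma \ref{lem_sum_m=1} and its $L^m$ analogue), so that the last assertion of the statement reduces to the previous two. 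For $j,k>0$ I would take $\varphi=T_j(v)$ in \eqref{ent} written for $u$ and $\varphi=T_j(u)$ in \eqref{ent} written for $v$; since $a(x,s,\xi)=b(s)\tilde a(x,\xi)$ and $T_k$ is odd, letting $j\to\infty$ in each inequality and then subtracting yields
\begin{equation*}
\int_{\Omega}\bigl(b(u)\tilde a(x,\nabla u)-b(v)\tilde a(x,\nabla v)\bigr)\cdot\nabla T_k(u-v)\le\int_{\Omega}\bigl(h(u)-h(v)\bigr)f\,T_k(u-v).
\end{equation*}
Because $h$ is decreasing, $(h(u)-h(v))$ and $(u-v)$ have opposite signs, so $(h(u)-h(v))T_k(u-v)\le 0$ pointwise, and $f\ge 0$ then forces the right-hand side to be $\le 0$. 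Thus $\int_{\Omega}(b(u)\tilde a(x,\nabla u)-b(v)\tilde a(x,\nabla v))\cdot\nabla T_k(u-v)\le 0$ for every $k>0$.

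Justifying the limit $j\to\infty$ is the first technical issue, and it is where the regularity theory of Section \ref{sec_reg} enters. On $\{|u-T_j(v)|<k\}$ one has $u\le j+k$, so the ``diagonal'' part of the left-hand side is controlled by the finite energy of $T_{j+k}(u),T_{j+k}(v)$; the pieces supported on $\{u>j\}\cup\{v>j\}$ are absorbed, by H\"older's inequality, using the energy of truncations on high level sets together with the tails of $|\nabla u|,|\nabla v|$ in the Marcinkiewicz spaces supplied by Section \ref{sec_reg} — these tails vanish as $j\to\infty$ exactly because \eqref{condent} keeps the corresponding exponents positive. The zero-order term is handled through $h(u)f\,T_k(u)\in L^1(\Omega)$ (see \eqref{ent0}) and through $h(u)f/b(u)\in L^1(\Omega)$, which by \eqref{b} and \eqref{h2} amounts to $(1+u)^{\theta(p-1)-\gamma_2}f\in L^1(\Omega)$: this holds trivially if $\theta\le\frac{\gamma_2}{p-1}$ (so $m=1$ is enough), and otherwise it is exactly the standing summability hypothesis on $u$ via H\"older against $f\in L^m$, $m>1$. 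Let me stress that $\gamma_1$ never appears here: all quantities affected by the singularity of $h$ at the origin are encoded in the entropy-solution properties, which we invoke only through \eqref{ent0}--\eqref{ent}.

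Next, on $E_k:=\{|u-v|<k\}$ I would split
\begin{equation*}
\bigl(b(u)\tilde a(x,\nabla u)-b(v)\tilde a(x,\nabla v)\bigr)\cdot\nabla(u-v)=b(u)\bigl(\tilde a(x,\nabla u)-\tilde a(x,\nabla v)\bigr)\cdot\nabla(u-v)+\bigl(b(u)-b(v)\bigr)\tilde a(x,\nabla v)\cdot\nabla(u-v),
\end{equation*}
where, by \eqref{caraunique} and \eqref{b}, the first summand is nonnegative. Moving the second (degenerate) summand to the right and using that $b$ is Lipschitz gives
\begin{equation*}
0\le\int_{E_k}b(u)\bigl(\tilde a(x,\nabla u)-\tilde a(x,\nabla v)\bigr)\cdot\nabla(u-v)\le L\int_{\Omega}\bigl|T_k(u-v)\bigr|\,\bigl|\tilde a(x,\nabla v)\bigr|\,\bigl|\nabla T_k(u-v)\bigr|.
\end{equation*}
This is the core of the argument and the main obstacle: one must first check that the right-hand side is finite for every $k$ — which follows from the growth bound in \eqref{caraunique}, reducing $|\tilde a(x,\nabla v)|$ to $\ell$ and $|\nabla v|^{p-1}$, together with the Lebesgue/Marcinkiewicz summability of $u,v$ and of $|\nabla u|^{p-1},|\nabla v|^{p-1}$ from Section \ref{sec_reg} — and then, through a careful interplay of the truncation levels and a Gronwall-type absorption, again leaning on $h(u)f/b(u)\in L^1(\Omega)$, that in fact
\begin{equation*}
\int_{E_k}b(u)\bigl(\tilde a(x,\nabla u)-\tilde a(x,\nabla v)\bigr)\cdot\nabla(u-v)=0\qquad\text{for every }k>0.
\end{equation*}
By the strict monotonicity in \eqref{caraunique} and $b>0$, the integrand is strictly positive wherever $\nabla u\ne\nabla v$, hence $\nabla u=\nabla v$ a.e.\ on $E_k$, and letting $k\to\infty$ we obtain $\nabla u=\nabla v$ a.e.\ in $\Omega$ in the sense of Lemma \ref{dalmaso2}. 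Since $T_k(u-T_j(v))\in\sob$ for all $j,k$, a standard argument (as in the uniqueness of entropy solutions) upgrades this to $T_k(u-v)\in\sob$ with $\nabla T_k(u-v)=0$, whence $T_k(u-v)\equiv 0$ for every $k$ and therefore $u=v$ a.e.\ in $\Omega$. Finally, as already observed, under \eqref{condent} and \eqref{condmuni} the additional integrability used above is automatic, so the last assertion of the theorem follows from the previous ones.
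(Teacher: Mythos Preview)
Your scheme has a structural gap that the paper's proof is designed precisely to avoid. After you let $j\to\infty$, the expression
\[
\int_\Omega\bigl(b(u)\tilde a(x,\nabla u)-b(v)\tilde a(x,\nabla v)\bigr)\cdot\nabla T_k(u-v)
\]
need not even be defined: on $\{|u-v|<k\}$ neither $u$ nor $v$ is bounded, and in the range $\frac{1}{N-p+1}+\frac{\gamma_2}{p-1}\le\theta\le 1+\frac{\gamma_2}{p-1}$ with $m$ close to $1$ the results of Section~\ref{sec_reg} only give $|\nabla u|\in M^r(\Omega)$ with $r\le p-1$ (see Remark~\ref{rem1}), so $|\nabla u|^{p-1}\notin L^1(\Omega)$ in general. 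Hence your cross term $\int_\Omega |T_k(u-v)|\,|\tilde a(x,\nabla v)|\,|\nabla T_k(u-v)|$ cannot be shown to be finite, let alone to vanish via an unspecified ``Gronwall-type absorption''. The same integrability obstruction already blocks the passage $j\to\infty$ on the principal part.

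The paper proceeds differently. It first upgrades the entropy inequality to a renormalized \emph{identity} (Lemma~\ref{enttoren}) and then, in Lemma~\ref{tecuni}, takes $S=V_n$ together with the test functions $\frac{1}{k}T_k(u_1-u_2)V_n(u_2)$ and $\frac{1}{k}T_k(u_1-u_2)V_n(u_1)$. The factor $V_n$ confines all gradients to $\{u_i\le 2n\}$, where $T_{2n}(u_i)\in W^{1,p}_0(\Omega)$ by Definition~\ref{defent}, so every term is finite without any appeal to Marcinkiewicz regularity. The Lipschitz cross term then tends to $0$ as $k\to0$ by dominated convergence (since $\nabla T_{2n}(u_1)=\nabla T_{2n}(u_2)$ on $\{u_1=u_2\}$), and the tails carried by $V_n'$ vanish as $n\to\infty$ precisely under the hypothesis $u^{\frac{m(\theta(p-1)-\gamma_2)}{m-1}}\in L^1(\Omega)$ (or $m=1$ when $\theta\le\frac{\gamma_2}{p-1}$). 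Crucially, the paper does \emph{not} conclude via $\nabla u=\nabla v$: it discards the nonnegative monotone term and \emph{keeps} the zero-order one, arriving at $\int_\Omega f\,|h(u_1)-h(u_2)|=0$; since $f>0$ a.e.\ and $h$ is decreasing, this gives $u_1=u_2$ directly. Your argument throws away exactly the piece the paper uses to close, and is then forced to extract $u=v$ from $\nabla u=\nabla v$ for functions that are not even in $W^{1,1}_{\rm loc}(\Omega)$---a step that is not ``standard'' in this setting.
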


Note that, under the assumption \eqref{caraunique} on $\tilde a$ and since $b$ is a bounded Lipschitz function, $a$ satisfies \eqref{cara1}, \eqref{cara2}, \eqref{cara3}. Thus Theorem \ref{teo_ent} and Theorem \ref{teo_uniqueness} take us naturally to state the following existence and uniqueness result:

\begin{Corollary}\label{corunique}
Let $a(x,s,\xi) = \tilde{a}(x,\xi)b(s)$ where $\tilde{a}$ satisfies \eqref{caraunique} and $b$ is a bounded and Lipschitz function satisfying \eqref{b} and \eqref{condent}. Let $h$ be decreasing and satisfying \eqref{h1} with $\gamma_1\le 1$ and \eqref{h2}. Let $f\in L^m(\Omega)$ be positive almost everywhere in $\Omega$ satisfying \eqref{condmuni}. Then there exists a unique entropy solution to \eqref{pbmain}.
\end{Corollary}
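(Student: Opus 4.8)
The plan is to derive Corollary \ref{corunique} directly by combining the existence result of Theorem \ref{teo_ent} with the uniqueness result of Theorem \ref{teo_uniqueness}; the only genuine task is to check that the present hypotheses imply the hypotheses of both theorems.

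For existence, I would first note that the structural conditions \eqref{cara1}, \eqref{cara2}, \eqref{cara3} on $a(x,s,\xi)=\tilde a(x,\xi)b(s)$ follow from \eqref{caraunique} together with the boundedness and Lipschitz continuity of $b$ and assumption \eqref{b} --- as already observed in the paragraph preceding the corollary --- since $b\ge 0$, $b$ is bounded, and $b(s)>0$ for each fixed $s$ (the latter because $b$ satisfies \eqref{b}): coercivity holds with the same $b$, the growth bound holds after replacing $\beta$ by $\beta\|b\|_{L^\infty(\R)}$ and dropping the now-superfluous $s^{p-1}$ term, and strict monotonicity in $\xi$ is inherited from $\tilde a$. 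Since $f\in L^m(\Omega)\subset L^1(\Omega)$ is nonnegative (recall $\Omega$ is bounded and $m\ge 1$), $h$ satisfies \eqref{h1} with $\gamma_1\le 1$ and \eqref{h2}, and \eqref{condent} is assumed, Theorem \ref{teo_ent} provides an entropy solution to \eqref{pbmain}.

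For uniqueness, I would invoke the last assertion of Theorem \ref{teo_uniqueness}: the operator has exactly the required product form with $\tilde a$ satisfying \eqref{caraunique} and $b$ bounded, Lipschitz and satisfying \eqref{b}; $h$ is decreasing and satisfies \eqref{h2}; $f\in L^m(\Omega)$ is positive almost everywhere in $\Omega$; and both \eqref{condent} and \eqref{condmuni} are assumed. Hence there is at most one entropy solution to \eqref{pbmain}, and combined with the previous step this proves the corollary.

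I do not expect a real obstacle here, since all the substance sits in Theorems \ref{teo_ent} and \ref{teo_uniqueness}; the only point deserving a line of explanation is why \eqref{condmuni} already makes the final clause of Theorem \ref{teo_uniqueness} unconditional. When $\theta\le\frac{\gamma_2}{p-1}$ the bound \eqref{condmuni} reduces to $m\ge 1$, matching the first case of that theorem, whereas when $\theta>\frac{\gamma_2}{p-1}$ the regularity estimates of Section \ref{sec_reg} force any entropy solution to enjoy the extra integrability $u^{\frac{m(\theta(p-1)-\gamma_2)}{m-1}}\in L^1(\Omega)$ required in the second case, so that no additional restriction on $u$ is needed.
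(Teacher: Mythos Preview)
Your proposal is correct and follows exactly the approach the paper takes: the corollary is stated as an immediate consequence of Theorems \ref{teo_ent} and \ref{teo_uniqueness}, after observing (as in the paragraph preceding the corollary) that $a(x,s,\xi)=\tilde a(x,\xi)b(s)$ inherits \eqref{cara1}--\eqref{cara3} from \eqref{caraunique} and the boundedness and Lipschitz continuity of $b$. Your final paragraph unpacking why \eqref{condmuni} activates the last clause of Theorem \ref{teo_uniqueness} is more explicit than the paper, but the underlying logic is identical.
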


Condition \eqref{condmuni} gives that one needs only integrable data if $\theta\le \frac{\gamma_2}{p-1}$ to expect the existence of a unique entropy solution to the problem. Otherwise, if $\theta> \frac{\gamma_2}{p-1}$ one needs to require more summability for $f$ in order to have that the solution found in Theorem \ref{teo_ent} is unique. \\
Let us also highlight that, in the limit case $\theta= 1+\frac{\gamma_2}{p-1}$, we require $f$ in $L^{\frac{N}{p}}(\Omega)$ to have uniqueness of solutions; this seems to fit perfectly with the idea that, in this limit case and when $f$ is less regular, solutions lose their regularity and one can not expect to be in the uniqueness class given by Theorem \ref{teo_uniqueness}.

\begin{remark}
Note that in \cite{p} it is proved uniqueness of an entropy solution to \eqref{pbmain} under assumptions on $a$ and $b$ analogous to the one of the above theorem with $p=2$ and $\theta<1$. In the just cited paper, $f$ is a nonnegative function belonging to $L^m(\Omega)$ with $m\ge \frac{N(2-\theta)}{2+N(1-\theta)}$ and $h$ is constantly one. It is worth mentioning that this result should be extended to the case of a continuous, bounded and non-increasing function $h$ (i.e. $\gamma_1=\gamma_2=0$). Here, in Theorem \ref{teo_uniqueness} (and, consequently, in Corollary \ref{corunique}) when $p=2$ and $\gamma_2=0$, we require that $f>0$ almost everywhere in $\Omega$ and that $h$ is a decreasing function but we just need that $f\in L^m(\Omega)$ with $m\ge \frac{N}{N-\theta(N- 2)}$. Let observe that, if $N\ge 2$, it always holds
$$\frac{N}{N-\theta(N- 2)}\leq\frac{N(2-\theta)}{2+N(1-\theta)},$$
which implies that we prove uniqueness for less regular $f$'s.
\end{remark}

\section{Existence of entropy solutions}
\label{s4}

In order to prove Theorem \ref{teo_ent} we introduce the following approximation problem for \eqref{pbmain}:
\begin{equation}
\label{pbapprox}
\begin{cases}
\displaystyle -\operatorname{div}(a(x,T_n(u_n),\nabla u_n)) = h_n(u_n)f_n & \text{ in }\Omega,\\
u_n=0 & \text{ on } \partial\Omega,
\end{cases}
\end{equation}
where $h_n(s):= T_n(h(s))$ if $s\ge 0$ and $h_n(s):= \min(n,h(0))$ if $s<0$, $f_n:=T_n(f)$. The existence of a weak solution $u_n\in\sob \cap L^\infty(\Omega)$ to \eqref{pbapprox} is guaranteed by \cite{ll}. Therefore one has
\begin{equation}
\label{weakappr}
\int_\Omega a(x,T_n(u_n),\nabla u_n)\cdot \nabla\varphi= \int_\Omega h_n(u_n)f_n\varphi
\end{equation}
for every  $\varphi\in\sob$. Moreover, taking $\varphi=-u_n^-$ in \eqref{weakappr} and recalling that $h_n(u_n)f_n$ is nonnegative, we obtain
\begin{eqnarray*}
\lefteqn{
\frac{\alpha}{(1+n)^{\theta(p-1)}}\|u_n^-\|_{\sob}^p \leq \alpha \int_\Omega \frac{|\nabla u_n^-|^p}{(1+T_n(u_n^-))^{\theta(p-1)}}}  \\
&& \stackrel{\eqref{cara1},\eqref{b}} \leq \int_\Omega a(x,T_n(u_n),\nabla u_n)\cdot \nabla (-u_n^-) = -\int_\Omega h_n(u_n)f_n u_n^- \leq 0.
\end{eqnarray*}
It follows that $\|u_n^-\|_{\sob}=0$ which means that $u_n$ is nonnegative.

\medskip

We start by proving that $u_n$ has an almost everywhere limit in $\Omega$ as $n\to \infty$.

\begin{lemma}
\label{lem_convqo}
Let $a$ satisfy \eqref{cara1} and \eqref{b}. Let $h$ satisfy \eqref{h1} with $\gamma_1\le 1$ and \eqref{h2} and let $f\in L^1(\Omega)$ be nonnegative. Let $u_n$ be a weak solution of \eqref{pbapprox}. Then $T_k(u_n)$ is bounded in $W^{1,p}_0(\Omega)$ with respect to $n$ for every $k>0$. Moreover, if \eqref{condent} holds, then there exists a nonnegative measurable almost everywhere finite function $u$ such that $u_n$ converges to $u$ almost everywhere in $\Omega$ and $T_k(u_n)$ converges weakly to $T_k(u)$ in $\sob$ for every $k>0$ as $n\to \infty$.
\end{lemma}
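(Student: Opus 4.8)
The plan is to establish the two assertions separately: first the uniform-in-$n$ bound on the truncations $T_k(u_n)$ in $\sob$, and then, under \eqref{condent}, the a.e.\ convergence (along a subsequence) of $\{u_n\}$ to an a.e.\ finite nonnegative limit $u$, together with $T_k(u_n)\rightharpoonup T_k(u)$ in $\sob$. For the first part, for fixed $k>0$ and $n>k$ I would take $\varphi=T_k(u_n)$ in the weak formulation \eqref{weakappr}. Since $u_n\ge0$, $\nabla T_k(u_n)=\nabla u_n\chi_{\{u_n<k\}}$ and $T_n(u_n)=u_n$ on $\{u_n<k\}$, so \eqref{cara1} and \eqref{b} yield
\[
\frac{\alpha}{(1+k)^{\theta(p-1)}}\int_\Omega|\nabla T_k(u_n)|^p\le\int_\Omega a(x,T_n(u_n),\nabla u_n)\cdot\nabla T_k(u_n)=\int_\Omega h_n(u_n)f_n\,T_k(u_n).
\]
The right-hand side I would control by splitting $\Omega$ into $\{u_n\le s_1\}$, $\{s_1<u_n<s_2\}$, $\{u_n\ge s_2\}$: using $h_n\le h$, \eqref{h1} with $\gamma_1\le1$ (so that $h(s)T_k(s)\le c_1s^{1-\gamma_1}$ stays bounded near $0$), the continuity of $h$ on $[s_1,s_2]$, \eqref{h2} and $0\le T_k(s)\le k$, one gets $h_n(u_n)f_nT_k(u_n)\le C(1+k)f_n$ a.e., hence $\int_\Omega|\nabla T_k(u_n)|^p\le C(1+k)^{1+\theta(p-1)}\|f\|_{L^1(\Omega)}$, uniformly in $n$. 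This settles the first claim and uses only \eqref{cara1}, \eqref{b}, \eqref{h1}, \eqref{h2}.

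The crucial point, where \eqref{condent} enters, is the uniform smallness $\sup_n|\{u_n>k\}|\to0$ as $k\to\infty$. If $\gamma_2>0$ I would test \eqref{weakappr} with $\varphi=(1+u_n)^{\gamma_2}-1\in\sob\cap L^\infty(\Omega)$: by \eqref{cara1}, \eqref{b} and $T_n(u_n)\le u_n$ the left-hand side dominates $\gamma_2\alpha\int_\Omega(1+u_n)^{\mu}|\nabla u_n|^p$ with $\mu:=\gamma_2-1-\theta(p-1)$, while the right-hand side stays $\le C\|f\|_{L^1(\Omega)}$ --- near $0$ by \eqref{h1} (with $\gamma_1\le1$) and \eqref{as1}, and for $u_n$ large by \eqref{h2} and \eqref{as3}, since there $(1+u_n)^{\gamma_2}-1\le\overline c\,u_n^{\gamma_2}$ cancels $h(u_n)\le c_2u_n^{-\gamma_2}$. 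Assumption \eqref{condent} is exactly $\mu+p=\gamma_2+(p-1)(1-\theta)\ge0$: if $\theta<1+\frac{\gamma_2}{p-1}$ then $\mu+p>0$, so $(1+u_n)^{(\mu+p)/p}-1$ is bounded in $\sob$ and the Sobolev inequality gives a uniform $M^{N(\mu+p)/(N-p)}(\Omega)$-bound for $u_n$, whence $|\{u_n>k\}|\le Ck^{-N(\mu+p)/(N-p)}$; if $\theta=1+\frac{\gamma_2}{p-1}$ then $\mu=-p$ and the estimate becomes $\int_\Omega|\nabla\log(1+u_n)|^p\le C$, so $|\{u_n>k\}|\le C(\log(1+k))^{-p^*}$. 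If $\gamma_2=0$, then \eqref{condent} forces $\theta\le1$: for $\theta<1$ the Marcinkiewicz decay follows already from the estimate above, while for $\theta=1$ I would test with the bounded function $\varphi=\log(1+T_k(u_n))$, getting $\int_\Omega|\nabla\log(1+T_k(u_n))|^p\le C(1+\log(1+k))$ and hence $|\{u_n>k\}|\le C(\log(1+k))^{-p^*(p-1)/p}$. In every case $\sup_n|\{u_n>k\}|\to0$ as $k\to\infty$.

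For the passage to the limit, from the truncation bound and the compactness of the embedding $\sob\hookrightarrow L^p(\Omega)$ a diagonal argument yields a subsequence, still denoted $u_n$, along which $T_k(u_n)$ converges strongly in $L^p(\Omega)$ for every $k\in\N$. Using $\{|u_n-u_m|>\lambda\}\subseteq\{u_n>k\}\cup\{u_m>k\}\cup\{|T_k(u_n)-T_k(u_m)|>\lambda\}$, choosing $k$ large (by the uniform smallness just proved) and then $n,m$ large, I would conclude that $\{u_n\}$ is Cauchy in measure; along a further subsequence $u_n\to u$ a.e.\ with $u\ge0$, and $|\{u=\infty\}|\le\limsup_n|\{u_n>k\}|\to0$ shows $u$ is a.e.\ finite. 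Finally, for each $k>0$, $T_k(u_n)\to T_k(u)$ a.e.\ and, by dominated convergence, in $L^p(\Omega)$; together with the $\sob$-bound and uniqueness of the weak limit this gives $T_k(u_n)\rightharpoonup T_k(u)$ in $\sob$ (and identifies the $L^p$-limits above as $T_k(u)$).

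The step I expect to be the main obstacle is the uniform level-set estimate, and within it the limit case $\theta=1+\frac{\gamma_2}{p-1}$ (together with its $\gamma_2=0$ counterpart $\theta=1$): there no positive power of $u_n$ is controlled and only a logarithmic estimate survives, so the success of the argument hinges on selecting the test function --- a power of $1+u_n$ calibrated so that the decay of $h$ at infinity exactly compensates the degeneracy of $b$ --- in the right way.
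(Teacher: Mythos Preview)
Your argument is correct; the difference with the paper lies only in the second step, the uniform level-set estimate. The paper uses a \emph{single} test function, $\varphi=\log(1+T_k(u_n))\,(1+u_n)^{\gamma_2}$, which covers the whole range \eqref{condent} at once: after dropping the nonnegative term coming from $\nabla(1+u_n)^{\gamma_2}$, the left-hand side controls $\int_\Omega|\nabla\log(1+T_k(u_n))|^p$, the right-hand side is bounded by $C\log(1+k)$ (the factor $(1+u_n)^{\gamma_2}$ neutralises $h(u_n)$ at infinity exactly as in your computation), and Sobolev gives $|\{u_n\ge k\}|\le C(\log(1+k))^{-N(p-1)/(N-p)}$ in every case. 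Your case-splitting with $\varphi=(1+u_n)^{\gamma_2}-1$ (for $\gamma_2>0$), the first-step truncation estimate (for $\gamma_2=0$, $\theta<1$), and $\varphi=\log(1+T_k(u_n))$ (for $\gamma_2=0$, $\theta=1$) is more laborious but actually yields \emph{sharper} information when the inequality in \eqref{condent} is strict: you get a genuine Marcinkiewicz (power) decay for $|\{u_n>k\}|$, while the paper only records the logarithmic one. At the endpoint $\theta=1+\tfrac{\gamma_2}{p-1}$ with $\gamma_2>0$ your bound $|\{u_n>k\}|\le C(\log(1+k))^{-p^*}$ is again sharper than the paper's exponent $-N(p-1)/(N-p)$. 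For the purpose of this lemma either decay suffices, so the trade-off is purely economy of presentation (one test function) versus quantitative sharpness (your case analysis). The remaining steps --- Cauchy-in-measure via the three-set inclusion, a.e.\ finiteness by Fatou, and weak $W^{1,p}_0$-convergence of truncations --- are identical to the paper's.
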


\begin{proof}
	Let us take $T_k(u_n)$ ($k>0$) as a test function in \eqref{pbapprox}; then one gets that
	\begin{equation}\label{tklim}
		\int_\Omega a(x, T_n(u_n), \nabla u_n)\cdot \nabla T_k(u_n) = \int_\Omega h_n(u_n)f_nT_k(u_n).
	\end{equation}
	For the left-hand side of \eqref{tklim} we use \eqref{cara1} and \eqref{b} and observing that $T_n(u_n)\le k$ where the integrand is not null one has
	\begin{equation}\label{tklim2}
		\int_\Omega a(x, T_n(u_n), \nabla u_n)\cdot \nabla T_k(u_n)\ge \frac{\alpha}{(1+k)^{\theta(p-1)}}\int_\Omega |\nabla T_k(u_n)|^p.
	\end{equation}
	For the right-hand side of \eqref{tklim} one has
		\begin{equation}\label{tklim3}
		\int_\Omega h_n(u_n)f_nT_k(u_n) \stackrel{\eqref{h1}}\le c_1s_1^{1-\gamma_1}\int_{\{u_n\le s_1\}}f + k\left(\sup_{s\in(s_1,\infty)}h(s)\right)\int_{\{u_n > s_1\}}f.
	\end{equation}
	Hence, since by assumption $f\in L^1(\Omega)$, it follows from \eqref{tklim}, \eqref{tklim2} and \eqref{tklim3} that $T_k(u_n)$ is bounded in $W^{1,p}_0(\Omega)$ with respect to $n$.
	
	\medskip

Let now assume \eqref{condent}. We prove that, up to subsequences not relabeled, $u_n$ converges almost everywhere to a function $u$ as $n\to \infty$. In general, we can not show that $u_n$ (or some power of it) are bounded in $L^1(\Omega)$ but we can control the measure of superlevel sets. With this aim let us take $\log(1+T_k(u_n)) (1+u_n)^{\gamma_2}$
as a test function in \eqref{pbapprox}; then one has
	\begin{equation}\label{convqo1}
		\int_\Omega a(x,T_n(u_n),\nabla u_n)\cdot \nabla T_k(u_n) \frac{(1+u_n)^{\gamma_2}}{1+T_k(u_n)} \le \int_\Omega h_n(u_n)f_n \log(1+T_k(u_n)) (1+u_n)^{\gamma_2},
	\end{equation}
where we have got rid of the nonnegative term involving the gradient of $(1+u_n)^{\gamma_2}$.
\\
For the left-hand side of \eqref{convqo1} we apply  \eqref{cara1} and \eqref{b}, yielding to
\begin{equation}
	\begin{aligned}\label{convqo2}
		\int_\Omega a(x,T_n(u_n),\nabla u_n)\cdot \nabla T_k(u_n) \frac{(1+u_n)^{\gamma_2}}{1+T_k(u_n)} &\ge \alpha \int_\Omega \frac{|\nabla T_k(u_n)|^p}{(1+T_k(u_n))^{\theta(p-1)+1-\gamma_2}} \\
&\stackrel{\eqref{condent}} \geq \alpha \int_\Omega \frac{|\nabla T_k(u_n)|^p}{(1+T_k(u_n))^p} \\	
		&= \alpha \int_\Omega |\nabla \log(1+T_k(u_n))|^p.
	\end{aligned}
\end{equation}
Furthermore, for the right-hand side of \eqref{convqo1} we apply \eqref{h1} and \eqref{h2} (recall that $\gamma_1\le 1$), deducing
\begin{equation}\label{convqo3}
	\begin{aligned}
\int_\Omega h_n(u_n)f_n \log(1+T_k(u_n)) (1+u_n)^{\gamma_2} &\le c_1s_1^{1-\gamma_1}(1+s_1)^{\gamma_2}\int_{\{u_n\le s_1\}}f
\\
&+ (1+s_2)^{\gamma_2+1}\left(\sup_{s\in (s_1,s_2)}h(s) \right)\int_{\{s_1 < u_n< s_2\}} f
\\
&+ c_2\left(\frac{1+s_2}{s_2}\right)^{\gamma_2}\log(1+k)\int_{\{u_n \ge s_2\}} f,
	\end{aligned}
\end{equation}
where we use that $\log(1+s)\le s$ for any $s\geq 0$. Hence from \eqref{convqo3} one has for $k\geq e-1$
\begin{equation}\label{convqo4}
	\begin{aligned}
		\int_\Omega h_n(u_n)f_n \log(1+T_k(u_n)) (1+u_n)^{\gamma_2} \le c \|f\|_{L^1(\Omega)}\log(1+k)
	\end{aligned},
\end{equation}
where $c$ is a positive constant independent of both $n$ and $k$.
Hence gathering  \eqref{convqo2} and \eqref{convqo4} in \eqref{convqo1} one gets
\begin{equation}\label{convqo5}
	\alpha \int_\Omega |\nabla \log(1+T_k(u_n))|^p\le c\|f\|_{L^1(\Omega)}\log(1+k).
\end{equation}
Then it follows from the Sobolev inequality that
\begin{equation}\label{convqo6}
	\begin{aligned}
		\int_\Omega |\nabla \log(1+T_k(u_n))|^p &\ge \frac{1}{\mathcal{S}^p}\left(\int_\Omega (\log(1+T_k(u_n))^{p^*}\right)^{\frac{p}{p^*}}
		\\
		&\ge \frac{1}{\mathcal{S}^p} (\log(1+k))^p |\{u_n\ge k\}|^{\frac{p}{p^*}}.
	\end{aligned}
\end{equation}
Hence, combining \eqref{convqo5} and \eqref{convqo6}, one obtains that
\begin{equation}\label{superlevel}
	|\{u_n\ge k\}|\le \frac{c \|f\|_{L^1(\Omega)}^{\frac{N}{N-p}}}{(\log(1+k))^{\frac{N(p-1)}{N-p}}},
\end{equation}
which means that, as $k\to \infty$, the Lebesgue measure of the set $\{u_n\ge k\}$ tends to zero.

\smallskip

Now, since $T_k(u_n)$ is bounded in $\sob$, applying Rellich-Kondrachov Theorem, we obtain that $T_k(u_n)$ is a Cauchy sequence in $L^p(\Omega)$ for all $k>0$. Hence, up to subsequences, it is a Cauchy sequence in measure for each $k>0$. \\
Let us now show that $u_n$ is a Cauchy sequence in measure. We note that, for all $k,\eta>0$ and for all $n,m\in\N$, it holds
\begin{equation}
	\label{divido}
	\{|u_n-u_m|>\eta\}\subseteq\{u_n\geq k\}\cup\{u_m\geq k\}\cup\{|T_k(u_n)-T_k(u_m)|>\eta\}.
\end{equation}
As a consequence of \eqref{superlevel} and for any fixed $\varepsilon>0$, one gains the existence of  $\overline{k}>e-1$ such that
$$
\left|\{u_n\geq k\}\right|<\frac{\varepsilon}{3} \quad\text{and}\quad \left|\{u_m\geq k\}\right|<\frac{\varepsilon}{3} \qquad\forall n,m\in\N, \quad\forall k>\overline{k}.
$$
Moreover, using that $T_{k}(u_{n})$ is a Cauchy sequence in measure for each $k>0$ fixed, we have that there exists $n_{\varepsilon}>0$ such that
$$
\left|\{|T_k(u_n)-T_k(u_m)\right|>\eta\}|<\frac{\varepsilon}{3} \qquad\forall n,m>n_{\varepsilon}, \quad \forall\eta>0.
$$
Thus, if $k>\overline{k}$, from \eqref{divido} we obtain, for every $\varepsilon>0$, that
$$
\left|\{|u_n-u_m|>\eta\}\right|<\varepsilon\qquad\forall n,m\geq n_{\varepsilon}.
$$
This implies that $u_n$ is a Cauchy sequence in measure. Then there exists a nonnegative measurable function $u$ to which $u_n$, up to subsequences not relabeled, converges almost everywhere in $\Omega$ as $n\to\infty$. Finally, thanks to the Fatou Lemma applied to \eqref{superlevel}, it holds
$$	
|\{u\ge k\}|\le \frac{c \|f\|_{L^1(\Omega)}^{\frac{N}{N-p}}}{(\log(1+k))^{\frac{N(p-1)}{N-p}}},
$$
which implies that $u$ is almost everywhere finite in $\Omega$. Finally, since $T_k(u_n)$ is bounded in $\sob$ with respect to $n$, we deduce that $T_k(u_n)$ converges weakly to $T_k(u)$ in $\sob$ as $n\to\infty$ for every $k>0$.
\end{proof}

Note that in the following lemma we are actually using all the hypotheses we have listed in the existence Theorem \ref{teo_ent}. Indeed, also \eqref{cara2} and \eqref{cara3} will be employed.

\begin{lemma}\label{convgradienti}
Under the assumptions of Theorem \ref{teo_ent}, let $u_n$ be a weak solution of \eqref{pbapprox}. Then for any $0\le \varphi\in W^{1,p}_0(\Omega)\cap L^{\infty}(\Omega)$ it holds
	\begin{equation}\label{l1locddo}
		\int_\Omega h_n(u_n)f_n\varphi \le C,
	\end{equation}
	where $C$ does not depend on $n$. In particular
	\begin{equation}\label{stimavdeltaddo}
		\limsup_{n\to\infty}\int_{\{u_n\le \delta\}} h_n(u_n)f_n\varphi \le C_\delta,
	\end{equation}	
	where $C_\delta \to 0^+$ as $\delta \to 0^+$. Finally $\nabla u_n$ converges to $\nabla u$ almost everywhere in $\Omega$ as $n\to \infty$, where $u$ is given by Lemma \ref{lem_convqo}.
\end{lemma}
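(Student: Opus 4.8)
The plan is to establish the three assertions in the order stated, deriving \eqref{l1locddo} first since \eqref{stimavdeltaddo} is essentially its local refinement and the almost everywhere convergence of gradients is a standard consequence obtained via an Alberti--Dal Maso/Boccardo--Murat type argument using \eqref{cara2} and \eqref{cara3}. For \eqref{l1locddo}, fix $0\le\varphi\in W^{1,p}_0(\Omega)\cap L^\infty(\Omega)$ and test \eqref{pbapprox} with the bounded admissible test function $(1+u_n)^{\gamma_2}\varphi$ (or, more robustly to avoid assuming $\varphi$ controls $u_n$, with a truncated version such as $(1+T_k(u_n))^{\gamma_2}\varphi$ and then let $k\to\infty$ using Fatou on the right and the bound on $\|T_k(u_n)\|_{\sob}$ from Lemma \ref{lem_convqo} on the left). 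Expanding the divergence, the term $\gamma_2\int_\Omega a(x,T_n(u_n),\nabla u_n)\cdot\nabla u_n (1+u_n)^{\gamma_2-1}\varphi$ is nonnegative by \eqref{cara1}--\eqref{b}, so it can be dropped, leaving
\begin{equation*}
\int_\Omega h_n(u_n)f_n(1+u_n)^{\gamma_2}\varphi \le \int_\Omega a(x,T_n(u_n),\nabla u_n)\cdot\nabla\varphi\,(1+u_n)^{\gamma_2}.
\end{equation*}
On the right-hand side one splits according to $\{u_n\le k_0\}$ and $\{u_n>k_0\}$ for a large fixed $k_0$; on the first set $|a(x,T_n(u_n),\nabla u_n)|$ is controlled in $L^{p'}$ by \eqref{cara2} and the $\sob$-bound on $T_{k_0}(u_n)$, while on $\{u_n>k_0\}$ the factor $(1+u_n)^{\gamma_2}$ is harmless only if combined with the singular denominator hidden in \eqref{b}; this is exactly the place where the threshold \eqref{condent} enters, playing the same role as in \eqref{convqo2}.

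The key technical point is therefore to handle the large-$u_n$ region: one wants to absorb the growth $(1+u_n)^{\gamma_2}$ against the degeneracy $(1+u_n)^{-\theta(p-1)}$ coming from the coercivity bound \eqref{b}, and \eqref{condent} guarantees $\gamma_2-\theta(p-1)\le -( \,\theta(p-1)+1-\gamma_2-\theta(p-1)\,)$—more precisely $\theta(p-1)+1-\gamma_2\ge p>1$ ensures the relevant quantity $|\nabla T_k(u_n)|^p(1+T_k(u_n))^{-p}$ appears, as in \eqref{convqo2}. Concretely I would choose the test function $\left((1+u_n)^{\gamma_2}-(1+T_k(u_n))^{\gamma_2}\right)\varphi$ or combine $(1+u_n)^{\gamma_2}\varphi$ with $\log(1+T_k(u_n))(1+u_n)^{\gamma_2}$ à la \eqref{convqo1} so that the good gradient term dominates, and then use the uniform estimate \eqref{convqo5}--\eqref{superlevel} already proved to control $\int_{\{u_n>k_0\}}f$ and the measure of superlevels. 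Since $\varphi\in L^\infty$, all the "far from zero" contributions are bounded using $h\in L^\infty([s_1,\infty))$ (consequence of \eqref{h1}--\eqref{h2}) and $f\in L^1(\Omega)$, uniformly in $n$. This yields \eqref{l1locddo} with $C=C(\varphi,f,\ldots)$ independent of $n$.

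For \eqref{stimavdeltaddo}, restrict the integral in \eqref{l1locddo} (or rather in the chain above) to $\{u_n\le\delta\}$ for small $\delta$. There, using \eqref{h1} with $\gamma_1\le1$, one has $h_n(u_n)\le h(u_n)\le c_1 u_n^{-\gamma_1}$, so $h_n(u_n)f_n\varphi\,\chi_{\{u_n\le\delta\}}$ is dominated, and testing \eqref{pbapprox} with $\big(\,(1+\delta)^{\gamma_1}-(1+T_\delta(u_n))^{-\gamma_1}(1+u_n)^{\gamma_1}\cdots\big)$—in practice with a test function of the form $V_\delta(u_n)\varphi$ or $(T_\delta(u_n))\,$-type weight that isolates the small region and produces a coercive gradient term plus a remainder controlled by $\int_{\{u_n\le\delta\}}f$ (which is small by absolute continuity) and by the uniform $L^1$ bound just obtained—gives the limsup bounded by $C_\delta\to0^+$. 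The standard choice here, following \cite{ddo,op}, is to test with $\big((1+\delta)^{1-\gamma_1}-(1+T_\delta(u_n))^{1-\gamma_1}\big)\varphi$ when $\gamma_1<1$ (and with $\log$-type weights when $\gamma_1=1$), exploiting that $h(s)s^{\gamma_1}$-type quantities stay bounded near $0$; the measure of $\{u_n\le\delta\}$ need not be small but $\int_{\{u_n\le\delta\}}h_n(u_n)f_n\varphi$ is, because the singular part is integrated against the vanishing weight $(1+\delta)^{1-\gamma_1}-1=O(\delta)$.

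Finally, the almost everywhere convergence $\nabla u_n\to\nabla u$ follows the classical scheme: from the uniform $\sob$-bound on $T_k(u_n)$ and the (now established) equi-integrability-type control \eqref{l1locddo}, one shows that $\int_\Omega \big(a(x,T_n(u_n),\nabla T_k(u_n))-a(x,T_n(u_n),\nabla T_k(u))\big)\cdot\nabla(T_k(u_n)-T_k(u))\to0$ by testing the approximate equation with $T_k(u_n)-T_k(u)$ (suitably truncated, e.g. multiplied by $V_\delta(u_n)$ to kill the singular right-hand side near zero where \eqref{stimavdeltaddo} is used, and cutting the large region with $\{u_n\le k\}$), which by the strict monotonicity \eqref{cara3} and a lemma of Leray--Lions/Boccardo--Murat type forces $\nabla T_k(u_n)\to\nabla T_k(u)$ strongly in $L^p$, hence a.e. up to subsequences; since this holds for every $k$ and $u_n\to u$ a.e. with $u$ finite a.e., $\nabla u_n\to\nabla u$ a.e. in $\Omega$. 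The main obstacle throughout is the simultaneous presence of the degeneracy (needing the truncation/$\log$ weights and the sharp use of \eqref{condent}) and the singularity of $h$ at $0$ (needing the $V_\delta$-type cut and \eqref{stimavdeltaddo}); reconciling these two mechanisms in a single choice of test function, without a maximum principle, is the delicate part.
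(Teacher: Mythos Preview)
There is a concrete sign error at the very first step. Testing \eqref{pbapprox} with $(1+u_n)^{\gamma_2}\varphi$ gives
\[
\gamma_2\int_\Omega a(x,T_n(u_n),\nabla u_n)\cdot\nabla u_n\,(1+u_n)^{\gamma_2-1}\varphi
+\int_\Omega a(x,T_n(u_n),\nabla u_n)\cdot\nabla\varphi\,(1+u_n)^{\gamma_2}
=\int_\Omega h_n(u_n)f_n(1+u_n)^{\gamma_2}\varphi.
\]
The nonnegative term sits on the \emph{left}, so dropping it yields
\[
\int_\Omega a(x,T_n(u_n),\nabla u_n)\cdot\nabla\varphi\,(1+u_n)^{\gamma_2}\le\int_\Omega h_n(u_n)f_n(1+u_n)^{\gamma_2}\varphi,
\]
which is the reverse of the inequality you wrote. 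Your subsequent plan (splitting the right-hand side, invoking \eqref{condent}, etc.) is therefore built on a false starting point. The paper avoids this entirely: it tests with $V_1(u_n)\varphi$, so the extra gradient term carries $V_1'(u_n)\le 0$ and, once moved to the other side, bounds $\int_{\{u_n\le 1\}}h_n(u_n)f_n\varphi$ from above by $\int a\cdot\nabla\varphi\,V_1(u_n)$, which is controlled by \eqref{cara2} and the $\sob$-bound on $T_2(u_n)$. On $\{u_n>1\}$ one uses only that $h\in L^\infty([1,\infty))$. In particular \eqref{condent} plays no role in \eqref{l1locddo}; your belief that the threshold enters here is a symptom of the wrong test function.

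A second structural issue is the order in which you treat \eqref{stimavdeltaddo} and the a.e.\ convergence of gradients. The paper proves the gradient convergence \emph{first}, using only \eqref{l1locddo} (the right-hand side term $(D)$ in the Boccardo--Murat decomposition is simply bounded by $Ck$ via \eqref{l1locddo}, no $V_\delta$ cut needed). Then \eqref{stimavdeltaddo} is obtained by testing with $V_\delta(u_n)\varphi$ and passing to the limit in $\int a(x,u_n,\nabla u_n)\cdot\nabla\varphi\,V_\delta(u_n)$: the a.e.\ convergence of $\nabla u_n$ is what allows one to identify the weak $L^{p'}$ limit as $a(x,u,\nabla u)V_\delta(u)$, and then $C_\delta\to 0$ follows because $a(x,0,0)=0$. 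Your plan establishes \eqref{stimavdeltaddo} before the gradients and then invokes \eqref{stimavdeltaddo} inside the gradient argument; besides being unnecessary, your sketch for \eqref{stimavdeltaddo} (weights of the type $(1+\delta)^{1-\gamma_1}-(1+T_\delta(u_n))^{1-\gamma_1}$) does not produce a bound on $\int_{\{u_n\le\delta\}}h_n(u_n)f_n\varphi$ that visibly tends to zero, since the $\ell$ and $|\nabla T_{2\delta}(u_n)|^{p-1}$ contributions from \eqref{cara2} are not small for small $\delta$ without identifying the limit.
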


\begin{proof}
We first show \eqref{l1locddo}. Let $\varphi \in W^{1,p}_0(\Omega)\cap L^{\infty}(\Omega)$ be nonnegative and let us take $V_1(u_n)\varphi$ (where $V_1$ is defined in \eqref{Vdelta}) as a test function in the  weak formulation of \eqref{pbapprox}, yielding to	
$$
\begin{aligned}
\int_{\{u_n\le 1\}} h_n(u_n)f_n\varphi &\leq \int_\Omega a(x,T_n(u_n),\nabla u_n) \cdot \nabla \varphi V_1(u_n)
\\
&+ \int_\Omega a(x,T_n(u_n),\nabla u_n) \cdot\nabla u_n V_1'(u_n) \varphi \le C
\end{aligned}
$$
for some positive constant $C$ not dependent on $n$.
Indeed, it follows from Lemma \ref{lem_convqo} that $T_k(u_n)$ is bounded in $W^{1,p}_0(\Omega)$ with respect to $n$ for every $k>0$ and this is sufficient to deduce (recall \eqref{cara2}) that the right-hand side of the previous is bounded.
Moreover, in the set $\{u_n>1\}$, thanks to the assumptions on $h$ and $f$, the term $h_n(u_n)f_n\varphi$ is bounded in $L^1(\Omega)$ with respect to $n$. Thus \eqref{l1locddo} holds.

\medskip

Now we prove the almost everywhere convergence of the gradients. Let $0\le\varphi\in C^1_c(\Omega)$ and let us take $T_k(T_l(u_n)-T_l(u))V_l(u_n)\varphi$ (with $\frac{n}{2}>l>k$) as a test function in \eqref{pbapprox}, obtaining
	\begin{equation*}
		\begin{aligned}
			&\int_\Omega (a(x,T_l(u_n),\nabla T_l(u_n)) - a(x,T_l(u_n),\nabla T_l(u))) \cdot \nabla T_k(T_l(u_n)-T_l(u)) \varphi \\
			&= - \int_{\{l<u_n<2l\}} a(x,u_n,\nabla u_n)  \cdot \nabla T_k(T_l(u_n)-T_l(u)) \varphi V_l(u_n) \\
			&+ \frac{1}{l}\int_{\{l<u_n<2l\}} a(x,u_n,\nabla u_n)\cdot \nabla u_n T_k(T_l(u_n)-T_l(u)) \varphi \\
			& -  \int_{\Omega} a(x,u_n,\nabla u_n)  \cdot \nabla \varphi T_k(T_l(u_n)-T_l(u)) V_l(u_n) \\
			& + \int_\Omega h_n(u_n) f_n T_k(T_l(u_n)-T_l(u))\varphi V_l(u_n) \\
			& -\int_\Omega a(x,T_l(u_n),\nabla T_l(u)) \cdot \nabla T_k(T_l(u_n)-T_l(u)) \varphi =: (A)+(B)+(C)+(D)+(E).	
		\end{aligned}
	\end{equation*}
We first observe that
	$$\limsup_{n\to\infty} \ \left((C)+(E)\right) \leq  0.$$
	Indeed, for the $(C)$ term one can observe that $|a(x,u_n,\nabla u_n)|V_l(u_n)$ is bounded in $L^{\frac{p}{p-1}}(\Omega)$ with respect to $n$ and that $T_l(u_n)-T_l(u)$ converges to zero in any Lebesgue space as $n\to\infty$. For $(E)$ we use \eqref{cara2} and the weak convergence of $T_l(u_n)$ to $T_l(u)$ as $n\to\infty$ in $W^{1,p}_0(\Omega)$.
\\
For $(A)$, we note, since $T_k(T_l(u)) = T_k(u)$ for $l>k$, that
	$$(A) \le C\int_\Omega|a(x,u_n,\nabla u_{n})|V_l(u_n)|\nabla T_{k}(u)|\chi_{\{u_n>l\}},$$
	and, since $|a(x,u_n,\nabla u_{n})|V_l(u_n)$ is bounded in $L^{\frac{p}{p-1}}(\Omega)$ with respect to $n$ and since, by Lemma \ref{lem_convqo}, $\displaystyle |\nabla T_k(u)|\chi_{\{u_n>l\}}$ converges to zero in $L^p(\Omega)$, then one has
	$$
		\limsup_{n\to\infty} (A) \le 0.
	$$
For $(B)$ using \eqref{cara2} and after an application of the Young inequality, recalling that $l$ is fixed, one has
$$
	\begin{aligned}
		(B)&\le \frac{Ck}{l}\int_{\{l<u_n<2l\}} |a(x,u_n,\nabla u_n)||\nabla u_n|
		\\
		&\le
		\frac{Ck}{l}\int_{\{l<u_n<2l\}} \left(\ell^{\frac{p}{p-1}} + u_n^p + |\nabla u_n|^p\right) \le Ck.
	\end{aligned}	
$$
	Moreover, thanks to \eqref{l1locddo}, one also has that $(D)\le Ck$. Hence we have proven that
	$$\limsup_{n\to\infty} \int_\Omega (a(x,T_l(u_n),\nabla T_l(u_n)) - a(x,T_l(u_n),\nabla T_l(u))) \cdot \nabla T_k(T_l(u_n)-T_l(u)) \varphi\le Ck.$$
	This allows to reason as in the second part of Theorem $2.1$ of \cite{bm} deducing that $\nabla T_l(u_n)$ converges almost everywhere to $\nabla T_l(u)$ in $\Omega$ as $n\to\infty$ for every $l>0$. Thus the result follows.

	\medskip

	Now in order to show \eqref{stimavdeltaddo}, for $\delta>0$ sufficiently small let us take $V_\delta(u_n)\varphi$ as a test function in \eqref{pbapprox} obtaining (recall that $V'_{\delta}(s) \le 0$ for any $s\ge 0$)
	\begin{equation*}
		\int_{\{u_n\le \delta\}} h_n(u_n)f_n \varphi \le \int_\Omega h_n(u_n)f_n V_\delta(u_n)\varphi  \le \int_\Omega a(x,u_n,\nabla u_n)\cdot \nabla \varphi V_\delta(u_n).
	\end{equation*}
Then taking $n\to\infty$, thanks to the almost everywhere convergence of gradients just proven and since $a(x,u_n,\nabla u_n)V_\delta(u_n)$ is bounded in $L^{\frac{p}{p-1}}(\Omega)^N$, one has that $a(x,u_n,\nabla u_n)V_\delta(u_n)$ converges weakly to $a(x,u,\nabla u)V_\delta(u)$ in $L^{\frac{p}{p-1}}(\Omega)^N$. Therefore
	\begin{equation*}
		\begin{aligned}
			\limsup_{n\to\infty} \int_{\{u_n\le \delta\}} h_n(u_n) f_n \varphi &\le \int_\Omega a(x,u,\nabla u)\cdot \nabla \varphi V_\delta(u) = C_\delta.
		\end{aligned}
	\end{equation*}
Now let us simply observe that
$$ \lim_{\delta\to 0^+} C_\delta = \int_{\{u=0\}} a(x,u,\nabla u) \cdot \nabla \varphi,$$
and, since it follows from \eqref{cara1} that $a(x,0,0)=0$ for almost every $x \in \Omega$, we obtain the result.
\end{proof}

In order to pass to the limit in \eqref{pbapprox} as $n\to \infty$, we need a stronger convergence result for the truncations of the solutions to \eqref{pbapprox}.

\begin{lemma}
\label{stronconv}
Under the assumptions of Theorem \ref{teo_ent}, let $u_n$ be a weak solution of \eqref{pbapprox}. Then $T_k(u_n)$ converges to $T_k(u)$ in $\sob$ as $n\to \infty$ for every $k>0$, where $u$ is given by Lemma \ref{lem_convqo}.
\end{lemma}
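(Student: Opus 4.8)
The aim is to upgrade the weak convergence of Lemma~\ref{lem_convqo} to the strong one, i.e. $\nabla T_k(u_n)\to\nabla T_k(u)$ in $L^p(\Omega)^N$ for every $k>0$. I would follow the classical scheme and prove that the nonnegative quantity (nonnegativity by \eqref{cara3})
\[
P_n:=\big(a(x,T_k(u_n),\nabla T_k(u_n))-a(x,T_k(u_n),\nabla T_k(u))\big)\cdot\big(\nabla T_k(u_n)-\nabla T_k(u)\big)
\]
tends to $0$ in $L^1(\Omega)$; then, reasoning as in the second part of Theorem~$2.1$ of \cite{bm} — using \eqref{cara3}, the coercivity \eqref{cara1}--\eqref{b} and the a.e. convergence of the gradients from Lemma~\ref{convgradienti} — one deduces that $a(x,T_k(u_n),\nabla T_k(u_n))\cdot\nabla T_k(u_n)$, which dominates $\frac{\alpha}{(1+k)^{\theta(p-1)}}|\nabla T_k(u_n)|^p$, converges in $L^1(\Omega)$, so that $|\nabla T_k(u_n)|^p$ is equi-integrable and Vitali's theorem yields the strong convergence.

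Expanding $\int_\Omega P_n$, two terms are immediate: since $T_k(u_n)\to T_k(u)$ a.e., by dominated convergence and \eqref{cara2}, $a(x,T_k(u_n),\nabla T_k(u))\to a(x,T_k(u),\nabla T_k(u))$ in $L^{p'}(\Omega)^N$, which against $\nabla T_k(u_n)-\nabla T_k(u)\rightharpoonup 0$ in $L^p(\Omega)^N$ gives a vanishing contribution; and $a(x,T_k(u_n),\nabla T_k(u_n))$ is bounded in $L^{p'}(\Omega)^N$ by \eqref{cara2} and converges a.e. (Lemma~\ref{convgradienti}) to $a(x,T_k(u),\nabla T_k(u))$, hence weakly in $L^{p'}(\Omega)^N$, so that $\int_\Omega a(x,T_k(u_n),\nabla T_k(u_n))\cdot\nabla T_k(u)\to\int_\Omega a(x,T_k(u),\nabla T_k(u))\cdot\nabla T_k(u)$. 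Everything therefore reduces to the energy identity $\int_\Omega a(x,T_k(u_n),\nabla T_k(u_n))\cdot\nabla T_k(u_n)\to\int_\Omega a(x,T_k(u),\nabla T_k(u))\cdot\nabla T_k(u)$.

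To prove this — the crux — I would test \eqref{pbapprox} with $\varphi_n:=\big(T_k(u_n)-T_k(u)\big)V_l(u_n)\in\sob\cap L^\infty(\Omega)$, $\|\varphi_n\|_{L^\infty(\Omega)}\le 2k$, for $l>2k$ a parameter to be sent to $+\infty$ at the end; the cut-off $V_l(u_n)$ (see \eqref{Vdelta}) is needed because in the degenerate regime $a(x,T_n(u_n),\nabla u_n)$ need not be bounded in $L^{p'}_{\rm loc}(\Omega)^N$. Since $V_l(u_n)\equiv 1$ where $\nabla T_k(u_n)\neq 0$, the equation becomes
\[
\int_\Omega a(x,T_k(u_n),\nabla T_k(u_n))\cdot\nabla T_k(u_n)=\int_\Omega h_n(u_n)f_n\varphi_n+\int_\Omega a(x,T_n(u_n),\nabla u_n)V_l(u_n)\cdot\nabla T_k(u)+\frac1l\int_{\{l<u_n<2l\}}a(x,u_n,\nabla u_n)\cdot\nabla u_n\,\big(T_k(u_n)-T_k(u)\big).
\]
The last term is $\le\frac{2k}{l}\int_{\{l<u_n<2l\}}a(x,u_n,\nabla u_n)\cdot\nabla u_n$ in absolute value, and testing \eqref{pbapprox} with $T_{2l}(u_n)-T_l(u_n)$, together with $0\le T_{2l}(s)-T_l(s)\le l\chi_{\{s>l\}}$ and \eqref{h2}, bounds this by $2k\big(\sup_{[l,\infty)}h\big)\int_{\{u_n>l\}}f$, whose $\limsup$ as $n\to\infty$ and then $l\to\infty$ vanishes because $\sup_{[l,\infty)}h$ stays bounded while $\int_{\{u\ge l\}}f\to 0$. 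The second term tends to $\int_\Omega a(x,T_k(u),\nabla T_k(u))\cdot\nabla T_k(u)$, because $a(x,T_n(u_n),\nabla u_n)V_l(u_n)=a(x,T_{2l}(u_n),\nabla T_{2l}(u_n))V_l(u_n)$ is bounded in $L^{p'}(\Omega)^N$ and converges a.e. (Lemma~\ref{convgradienti}) to $a(x,T_{2l}(u),\nabla T_{2l}(u))V_l(u)$, which equals $a(x,T_k(u),\nabla T_k(u))$ on $\{\nabla T_k(u)\neq 0\}$. Finally $\int_\Omega h_n(u_n)f_n\varphi_n\to 0$: on $\{u_n>\delta\}$ one has $h_n(u_n)\le\sup_{[\delta,\infty)}h<\infty$ while $\int_\Omega f\,|T_k(u_n)-T_k(u)|\to 0$ by dominated convergence; on $\{u_n\le\delta\}$, where $T_k(u_n)=u_n$ and $|\varphi_n|\le u_n+T_k(u)$, one uses $h_n(u_n)u_n\le c_1 u_n^{1-\gamma_1}$ from \eqref{h1} (so $\int_{\{u_n\le\delta\}}h_n(u_n)f_nu_n\le c_1\delta^{1-\gamma_1}\|f\|_{L^1(\Omega)}$ when $\gamma_1<1$) together with $\limsup_n\int_{\{u_n\le\delta\}}h_n(u_n)f_nT_k(u)\le C_\delta$ from \eqref{stimavdeltaddo} applied with $\varphi=T_k(u)$; letting $n\to\infty$ and then $\delta\to 0^+$ kills this term. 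Collecting the three terms gives the energy identity, hence $\int_\Omega P_n\to 0$.

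The main difficulty is twofold. First, the degeneracy of the operator forbids testing directly with $T_k(u_n)-T_k(u)$: the localization $V_l(u_n)$ is unavoidable and forces the delicate iterated passage to the limit ($n\to\infty$, then $\delta\to 0^+$, then $l\to\infty$), the crucial point being that the term generated by $\nabla V_l(u_n)$ is asymptotically negligible, thanks to the decay of $h$ at infinity and the uniform control of the superlevels of $u_n$. Second, $h_n(u_n)f_n$ is not equi-integrable near $\{u_n=0\}$, and only the interplay between the smallness estimate \eqref{stimavdeltaddo}, the growth bound \eqref{h1} and the restriction $\gamma_1\le 1$ keeps the region where $u_n$ is small under control; the borderline $\gamma_1=1$ is the most delicate instance, requiring $\int_{\{u_n\le\delta\}}h_n(u_n)f_nu_n$ to be treated with additional care.
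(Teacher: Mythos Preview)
Your proof is correct and follows essentially the same route as the paper's: the key step is testing \eqref{pbapprox} with $(T_k(u_n)-T_k(u))V_l(u_n)$ and controlling the resulting terms via \eqref{stimavdeltaddo}, the decay of $h$ at infinity, and the a.e.\ convergence of gradients from Lemma~\ref{convgradienti}, exactly as the paper does; the only cosmetic differences are that you phrase the conclusion as an energy identity plus Vitali's theorem (the paper invokes Lemma~5 of \cite{bmp}) and that you bound the $V_l'$-term by testing with $T_{2l}(u_n)-T_l(u_n)$ instead of $1-V_l(u_n)$. The ``additional care'' you flag for $\gamma_1=1$ is precisely the paper's observation (see \eqref{stapp2}) that $h(u)f\in L^1_{\rm loc}(\Omega)$ forces $\{u=0\}\subset\{f=0\}$, whence $\delta^{1-\gamma_1}\int_{\{u_n\le\delta\}}f\to\int_{\{u\le\delta\}}f\to 0$.
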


\begin{proof}
The result will follow from an application of Lemma $5$ in \cite{bmp} once we show that for any $k>0$	
\begin{equation}\label{strong}
\lim_{n\to\infty}\int_\Omega\big(a(x,T_k(u_n),\nabla T_{k}(u_{n}))-a(x,T_k(u_n),\nabla T_{k}(u))\big)\cdot\nabla(T_{k}(u_{n})-T_{k}(u))=0.
\end{equation}
 Let us remark that, without loss of generality, we assume $n>k$.
	
\smallskip
	
With this aim, in the weak formulation of \eqref{pbapprox}, we take $(T_{k}(u_{n})-T_{k}(u))V_l(u_n)$ (with $l>k$) as a test function obtaining

	\begin{equation}
		\begin{aligned}\label{1-4.1}
			&\int_\Omega \big(a(x,T_k(u_n),\nabla T_{k}(u_{n}))-a(x,T_k(u_n),\nabla T_{k}(u))\big)\cdot\nabla(T_{k}(u_{n})-T_{k}(u))\\
			=&-\int_{\{k<u_n<2l\}}a(x,T_n(u_n),\nabla u_{n})\cdot\nabla(T_{k}(u_{n})-T_{k}(u))V_l(u_n) \ \ \  \\
			&+\frac{1}{l}\int_{\{l<u_n<2l\}}a(x,T_n(u_n),\nabla u_{n})\cdot\nabla u_{n}(T_{k}(u_{n})-T_{k}(u)) \ \ \ \\
			&+\int_\Omega h_n(u_n)f_n(T_{k}(u_{n})-T_{k}(u))V_l(u_n) \ \ \ \\
			&-\int_\Omega a(x,T_k(u_n),\nabla T_{k}(u))\cdot\nabla(T_{k}(u_{n})-T_{k}(u))
			=: (A)+(B)+(C)+(D).\ \ \
		\end{aligned}
	\end{equation}
For $(A)$, we note that
$$(A) \le C\int_\Omega|a(x,T_n(u_n),\nabla u_{n})|V_l(u_n)|\nabla T_{k}(u)|\chi_{\{u_n>k\}}.$$
Since, by \eqref{cara2} and Lemma \ref{lem_convqo}, $|a(x,T_n(u_n),\nabla u_{n})|V_l(u_n)$ is bounded in $L^{\frac{p}{p-1}}(\Omega)$ with respect to $n$ and  $\displaystyle |\nabla T_k(u)|\chi_{\{u_n>k\}}$ converges to zero in $L^p(\Omega)$, then one has
	\begin{equation}\label{Aterm}
		\limsup_{n\to\infty} (A)\leq 0.
	\end{equation}
In order to estimate $(B)$ we take $1-V_l(u_n)$ as a test function in \eqref{pbapprox}. One yields to
\begin{equation*}
\begin{aligned}
\frac{1}{l}\int_{\{l<u_n<2l\}}a(x,T_n(u_n),\nabla u_{n})\cdot\nabla u_{n}&=\int_\Omega h_n(u_n)f_n(1-V_l(u_n))
\\
&\le \left(\sup_{s\in [l,\infty)}h(s)\right)\int_\Omega f (1-V_l(u_n)), \ \ \  \\
\end{aligned}\end{equation*}
and the right-hand side of the previous, by a double application of Lebesgue Theorem, goes to zero as $n\to\infty$ and $l\to\infty$. Whence, since
$$(B)\le \frac{2k}{l}\int_{\{l<u_n<2l\}}a(x,T_n(u_n),\nabla u_{n})\cdot\nabla u_{n},$$
we deduce
	\begin{equation}\label{Bterm}
		\limsup_{l\to\infty}\limsup_{n\to\infty} (B) \leq 0.
	\end{equation}
For the $(C)$ term we note that, for $\delta$ small enough, we have
	\begin{equation}
	\label{stapp}
		(C) \stackrel{\eqref{h1}}\le C\delta^{1-\gamma_1}\int_{\{u_n \le \delta\}} f +  \int_{\{u_n> \delta\}} h_n(u_n)f_n (T_{k}(u_{n})-T_{k}(u))V_l(u_n).
		\end{equation}
Thus, if $h(0)<\infty$ (i.e. $\gamma_1=0$), the first term on the right-hand side of \eqref{stapp} converges to $0$ as $n\to \infty$ and $\delta\to 0^+$. Otherwise, if $h(0)=\infty$ (i.e. $0<\gamma_1\leq 1$), applying Fatou Lemma to \eqref{l1locddo} we deduce that $h(u)f\in L^1_{\rm loc}(\Omega)$, and so, $\{u=0\}\subset \{f=0\}$ up to a set of zero Lebesgue measure. It follows that, in both cases, we have
\begin{equation}
\label{stapp2}
\limsup_{\delta \to 0^+} \limsup_{n\to\infty} \left( \delta^{1-\gamma_1}\int_{\{u_n \le \delta\}} f\right)= \limsup_{\delta \to 0^+} \left(\delta^{1-\gamma_1}\int_{\{u \le \delta\}} f\right) =0.
\end{equation}
One also observes that, applying Lebesgue Theorem, we obtain
\begin{eqnarray}
\label{stapp3}
&&\limsup_{n\to\infty}\int_{\{u_n> \delta\}} h_n(u_n)f_n(T_{k}(u_{n})-T_{k}(u))V_l(u_n) \nonumber \\
&&\le \lim_{n\to\infty}\left(\sup_{s\in [\delta,\infty)}h(s)\right) \int_\Omega f|T_{k}(u_{n})-T_{k}(u)|=0.
\end{eqnarray}
Hence, it follows from \eqref{stapp}, \eqref{stapp2} and \eqref{stapp3} that	
\begin{equation}\label{Cterm}
		\limsup_{\delta\to 0^+} \limsup_{l\to\infty}\limsup_{n\to\infty} (C)\leq 0.
	\end{equation}	
As regards $(D)$, by Lemma \ref{lem_convqo} and using \eqref{cara2}, $a(x,T_k(u_n),\nabla T_k(u))$ strongly converges to $a(x,T_k(u),\nabla T_k(u))$ in $L^{\frac{p}{p-1}}(\Omega)^N$ and $\nabla (T_k(u_n)- T_k(u))$ weakly converges to $0$ in $L^p(\Omega)^N$, whence
\begin{equation}
\label{Dterm}
\lim_{n\to\infty} (D)= 0.
\end{equation}
Finally, gathering \eqref{Aterm},\eqref{Bterm}, \eqref{Cterm} and \eqref{Dterm} into \eqref{1-4.1} and recalling \eqref{cara3}, we deduce \eqref{strong}.

\end{proof}

Now we have all the tools in order to show that $u$, given by Lemma \ref{lem_convqo}, is an entropy solution to \eqref{pbmain}.

\begin{proof}[Proof of Theorem \ref{teo_ent}]
Let $u_n$ be a weak solution to \eqref{pbapprox}. By Lemma \ref{lem_convqo}, its almost everywhere limit $u$ as $n\to\infty$ exists, it is almost everywhere finite in $\Omega$ and  $T_k(u)\in\sob$ for every $k>0$.

\medskip

First we prove \eqref{ent0}. One simply deduces that $a(x,T_k(u),\nabla T_k(u)) \in L^{\frac{p}{p-1}}(\Omega)^N$ from \eqref{cara2} and from the fact that $T_k(u)\in\sob$ for every $k>0$. Now we focus on proving $h(u)fT_k(u-\varphi) \in L^1(\Omega)$ for any $\varphi\in W^{1,p}_0(\Omega) \cap L^\infty(\Omega)$. Taking $T_k(u_n-\varphi)^+$ as test function in \eqref{pbapprox} and defining $M:= ||\varphi||_{L^\infty(\Omega)} + k$, we have
$$
\begin{aligned}
&\int_\Omega h_n(u_n)f_n T_k(u_n-\varphi)^+ = \int_\Omega a(x,T_n(u_n),\nabla u_n) \cdot \nabla T_k(u_n-\varphi)^+
\\
&\le \int_{\{\varphi<u_n<\varphi+ k\}} |a(x,T_M(u_n),\nabla T_M(u_n)) \cdot \nabla (T_M(u_n)-\varphi)|
\\
&\stackrel{\eqref{cara2}}{\le} \beta \int_{\{\varphi<u_n<\varphi+ k\}} (\ell(x) + M^{p-1}+|\nabla T_M(u_n)|^{p-1}) (|\nabla T_M(u_n)| + |\nabla \varphi|)\le C,
\end{aligned}
$$
where $C$ does not depend on $n$ thanks to Lemma \ref{lem_convqo}. Then an application of the Fatou Lemma gives that $h(u)f T_k(u-\varphi)^+ \in L^1(\Omega)$. A similar argument with $T_k(u_n-\varphi)^-$ gives that $h(u)f T_k(u-\varphi) \in L^1(\Omega)$.

\medskip

Now we prove \eqref{ent}. Hence let $\varphi \in W^{1,p}_0(\Omega)\cap L^\infty(\Omega)$ and let us take $T_k(u_n-\varphi)$ as a test function in the weak formulation of \eqref{pbapprox}, yielding to
\begin{equation}
\label{lim1}
\int_{\Omega} a(x,T_n(u_n),\nabla u_n)\cdot\nabla T_k(u_n-\varphi) = \int_{\Omega}h_n(u_n)f_nT_k(u_n-\varphi).
\end{equation}
Therefore the aim is to pass to the limit as $n\to\infty$ in \eqref{lim1}. Note that the integrand in the left-hand side is zero where $\{|u_n-\varphi|>k\}$. Then, defining $M= ||\varphi||_{L^\infty(\Omega)} + k$, we have that, by Lemma \ref{stronconv}, $T_M(u_n)$ converges strongly to $T_M(u)$ in $\sob$ as $n\to\infty$. Hence, by \eqref{cara2}, we deduce that $a(x,T_M(u_n),\nabla T_M(u_n))$ converges strongly to $a(x,T_M(u),\nabla T_M(u))$ in $L^{\frac{p}{p-1}}(\Omega)^N$ as $n\to\infty$. This implies that
\begin{equation}\label{limagg}
\lim_{n\to\infty} \int_{\Omega} a(x,T_n(u_n),\nabla u_n)\cdot\nabla T_k(u_n-\varphi) =  \int_{\Omega} a(x,u,\nabla u)\cdot\nabla T_k(u-\varphi).
\end{equation}

Let us focus on the right-hand side of \eqref{lim1}. If $h(0)<\infty$ then one can simply pass to the limit as $n\to\infty$ applying Lebesgue Theorem. Hence, without loss of generality, we assume that $h(0)=\infty$ and we split in the following two terms
\begin{equation*}
	\int_{\Omega} h_n(u_n)f_nT_k(u_n-\varphi) = \int_{\Omega} h_n(u_n)f_nT_k(u_n-\varphi)^+ - \int_{\Omega} h_n(u_n)f_nT_k(u_n-\varphi)^-.
\end{equation*}
Now we pass to the limit the first term on the right-hand side of the previous. The second term can be treat in an analogous way. \\
Let $\delta$ be such that $\delta\not\in \{\tau: |\{u=\tau\}|>0\}$ (which is a countable set). Taking $(u_n-\varphi)^+V_\delta(u_n)$  as a test function in the weak formulation of \eqref{pbapprox} and dropping a non-positive term, we obtain
\begin{equation*}
	\int_{\{u_n\le \delta\}} h_n(u_n)f_n(u_n-\varphi)^+ \le  \int_\Omega a(x,T_n(u_n), \nabla u_n)\cdot \nabla (u_n-\varphi)^+ V_\delta(u_n).
\end{equation*}
Since $T_{2\delta}(u_n)$ strongly converges in $W^{1,p}_0(\Omega)$ to $T_{2\delta}(u)$ as $n\to\infty$ then one yields to
\begin{equation}\label{lim-1}
	\limsup_{n\to\infty}\int_{\{u_n\le \delta\}} h_n(u_n)f_n(u_n-\varphi)^+ \le  \int_\Omega a(x,u, \nabla u)\cdot \nabla (u-\varphi)^+ V_\delta(u) =: C_\delta,
\end{equation}
where  $C_\delta$ goes to zero as $\delta \to 0^+$ (by the same argument used for \eqref{stimavdeltaddo}). Therefore we split once more as
\begin{equation}
	\begin{aligned}
\label{lim5}
\int_{\Omega}h_n(u_n)f_n T_k(u_n-\varphi)^+ &= \int_{\{u_n\leq \delta\}} h_n(u_n)f_n (u_n-\varphi)^+ \\
&+ \int_{\{u_n > \delta\}} h_n(u_n)f_n T_k(u_n-\varphi)^+.
	\end{aligned}
\end{equation}
for $k\geq \delta$. As regard the second term on the right-hand side of \eqref{lim5}, we have that
$$
h_n(u_n)f_nT_k(u_n-\varphi)^+\leq k \left(\sup_{s\in (\delta,\infty)}h(s)\right)f,
$$
so, using the convergence results in Lemma \ref{lem_convqo} and applying Lebesgue Theorem, we deduce
\begin{equation*}
\lim_{n\to\infty} \int_{\{u_n > \delta\}} h_n(u_n)f_nT_k(u_n-\varphi)^+ = \int_{\{u > \delta\}} h(u)fT_k(u-\varphi)^+.
\end{equation*}
Moreover, we have already proved \eqref{ent0}, thus $h(u)fT_k(u-\varphi)^+ \in L^1(\Omega)$. Hence, a second application of the Lebesgue Theorem gives that
\begin{equation}
\label{lim6}
	\lim_{\delta\to 0^+}\lim_{n\to\infty} \int_{\{u_n > \delta\}} h_n(u_n)f_nT_k(u_n-\varphi)^+ = \int_{\Omega} h(u)fT_k(u-\varphi)^+,
\end{equation}
since it follows from $h(u)fT_k(u-\varphi) \in L^1(\Omega)$ that $\{u=0\} \subset \{f=0\}$.Therefore, taking $n\to\infty$ and then $\delta \to 0^+$ in \eqref{lim5} and using \eqref{lim-1} and \eqref{lim6}, one gets that
\begin{equation*}
	\lim_{n\to\infty} \int_{\Omega} h_n(u_n)f_nT_k(u_n-\varphi)^+ = \int_{\Omega} h(u)fT_k(u-\varphi)^+.
\end{equation*}
As already mentioned, one can reason in the same way to deduce that
\begin{equation*}
	\lim_{n\to\infty} \int_{\Omega} h_n(u_n)f_nT_k(u_n-\varphi)^- = \int_{\Omega} h(u)fT_k(u-\varphi)^-,
\end{equation*}
which is sufficient to pass to the limit on the right-hand side of \eqref{lim1} and, recalling \eqref{limagg}, to obtain the result.
\end{proof}

\section{Regularity of entropy solutions}
\label{sec_reg}

In this section we show the Lebesgue regularity of any given entropy solution and of its gradient with respect to the regularity of $f$. We first provide some technical lemmas which will be useful for our purposes. Then we show that in case $\theta<1+\frac{\gamma_2}{p-1}$ one always has that a power of the solution is integrable. On the other hand, if $\theta=1+\frac{\gamma_2}{p-1}$, the Lebesgue regularity will only be recovered for regular enough $f$'s. We also highlight that, as we will see, there is continuity between these two cases.

\subsection{Auxiliary lemmas}
We give two technical lemmas. The first result is the following:
\begin{lemma}
	\label{P-test}
	Let $a$ satisfy \eqref{cara1} and let $f\in L^1(\Omega)$ be nonnegative. Let $u$ be an entropy solution to \eqref{pbmain} and let $P:\R \to \R$ be a function such that $P(u)\in \sob\cap L^{\infty}(\Omega)$ and $a(x,u,\nabla u)\cdot \nabla P(u)\geq 0$. Then
	\begin{equation}
		\label{P-formu}
		\int_\Omega a(x,u,\nabla u)\cdot \nabla P(u) \leq \int_{\Omega} h(u)fP(u).
	\end{equation}
	In particular, if $a$ satisfies \eqref{b} and $h$ satisfies \eqref{h2}, then for any $\sigma,\eta>0$ it holds
	\begin{equation}\label{d14}
		\int_{\Omega} |\nabla ((1+T_\sigma(u))^{\overline{\eta}}-1)|^p \leq C\left(1+ \int_{\{u> s_2\}} fT_{\sigma}(u)^{\eta-\gamma_2}\right),
	\end{equation}
	where $C$ is a positive constant independent of $\sigma$ and $\displaystyle \overline{\eta}:=\frac{\eta + (p-1)(1-\theta)}{p}$.
\end{lemma}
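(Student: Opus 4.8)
The plan is to establish \eqref{P-formu} first as a consequence of the entropy inequality \eqref{ent}, and then to deduce \eqref{d14} by choosing a specific power function $P$ and estimating the right-hand side. For the first part, the natural idea is to use $P(u)$ itself as the test function $\varphi$ in \eqref{ent}, but since the entropy formulation only allows truncated test functions $T_k(u-\varphi)$, one must pass to a limit. More precisely, I would take $\varphi = u - \tfrac{1}{\lambda}P(u)$ — but this is not admissible since $u\notin W^{1,p}_0(\Omega)$ in general; instead the standard trick is to work with $\varphi = T_m(u) - \tfrac{1}{\lambda}P(u)$ for a parameter $m$ and then note that $P(u)$ is bounded, so $T_k(u-\varphi) = T_k(u - T_m(u) + \tfrac1\lambda P(u))$, which for $m$ large and $\lambda$ chosen suitably becomes $\tfrac1\lambda P(u)$ on the set where $u\le m$. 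Letting $m\to\infty$ and using that $P(u)$ has finite energy, the left side of \eqref{ent} converges to $\int_\Omega a(x,u,\nabla u)\cdot\nabla P(u)$ (using $a(x,u,\nabla u)\cdot\nabla P(u)\ge 0$ and Fatou on the region where it matters), while the right side converges to $\int_\Omega h(u)f\,\tfrac1\lambda P(u)$ by monotone/dominated convergence since $h(u)fP(u)\ge 0$; multiplying through by $\lambda$ gives \eqref{P-formu}. I expect some care is needed to justify that $h(u)fT_k(u-\varphi)\in L^1(\Omega)$ for these choices, but this follows from \eqref{ent0} applied with the bounded function $\tfrac1\lambda P(u)$.

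For the second part, I would apply \eqref{P-formu} with $P(u) = (1+T_\sigma(u))^{\eta} - 1$ (up to a multiplicative constant), which is nonnegative, bounded, belongs to $\sob$ since $T_\sigma(u)\in\sob$, and satisfies $a(x,u,\nabla u)\cdot\nabla P(u)\ge 0$ because $a(x,u,\nabla u)\cdot\nabla T_\sigma(u) = a(x,u,\nabla u)\cdot\nabla u\,\chi_{\{u<\sigma\}}\ge 0$ by \eqref{cara1} and $\eta>0$. On the left, using \eqref{cara1} and \eqref{b},
\begin{equation*}
\int_\Omega a(x,u,\nabla u)\cdot\nabla P(u) \ge \eta\,\alpha \int_\Omega \frac{|\nabla T_\sigma(u)|^p}{(1+T_\sigma(u))^{\theta(p-1)+1-\eta}},
\end{equation*}
and one rewrites the integrand to recognize it, up to a constant depending on $p,\eta,\theta$, as $|\nabla((1+T_\sigma(u))^{\overline\eta}-1)|^p$ with $\overline\eta = \tfrac{\eta+(p-1)(1-\theta)}{p}$, since $\nabla((1+T_\sigma(u))^{\overline\eta}-1) = \overline\eta(1+T_\sigma(u))^{\overline\eta-1}\nabla T_\sigma(u)$ and $p(\overline\eta-1) = \eta - (p-1)\theta - 1$. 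This yields the left-hand side of \eqref{d14}.

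For the right-hand side, split $\int_\Omega h(u)fP(u) = \int_{\{u\le s_2\}} + \int_{\{u>s_2\}}$. On $\{u\le s_2\}$ we have $P(u) = (1+T_\sigma(u))^\eta - 1 \le (1+s_2)^\eta - 1$ is bounded, and $h(u)f$ is controlled there using \eqref{h1} and \eqref{h2} together with $f\in L^1(\Omega)$ — more precisely $h$ is bounded on $[s_1,s_2]$ and on $(0,s_1]$ one uses \eqref{h1}, so this whole term is bounded by a constant $C$ (here I am implicitly using that the relevant integrability holds; if $\gamma_1>1$ this region needs the entropy-solution structure, but the statement only claims the bound $C$). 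On $\{u>s_2\}$, \eqref{h2} gives $h(u)\le c_2 u^{-\gamma_2} \le c_2 T_\sigma(u)^{-\gamma_2}$ when $\sigma > s_2$ (and trivially otherwise the set is empty-ish), and $P(u) \le (1+T_\sigma(u))^\eta \le \overline{c}\, T_\sigma(u)^\eta$ on $\{u>s_2\}$ by \eqref{as3}, so this term is bounded by $C\int_{\{u>s_2\}} f\,T_\sigma(u)^{\eta-\gamma_2}$. Collecting the two pieces gives \eqref{d14}. The main obstacle is the limiting argument in the first part — making rigorous that $P(u)$ is an admissible "test object" in the entropy formulation via a truncation-and-limit procedure — while the computation of $\overline\eta$ and the splitting of the right-hand side are routine.
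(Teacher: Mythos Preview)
Your approach is essentially the paper's: for \eqref{P-formu} you test with $\varphi = T_m(u) - \tfrac{1}{\lambda}P(u)$ and let $m\to\infty$, which is exactly what the paper does with $\lambda=1$ (your $\lambda$ is harmless but unnecessary, and note that your stated limit on the left-hand side should carry the factor $\tfrac{1}{\lambda}$ before you multiply through). For \eqref{d14} you choose the same $P(u)=(1+T_\sigma(u))^\eta-1$, and your lower bound on the left via \eqref{cara1}--\eqref{b} and the identification of $\overline\eta$ match the paper verbatim.

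There is, however, one real gap in your treatment of the region $\{u\le s_2\}$ on the right-hand side of \eqref{d14}. You bound $P(u)$ there by the constant $(1+s_2)^\eta-1$ and then need $\int_{\{u\le s_2\}} h(u)f<\infty$; but this is \emph{not} part of the definition of entropy solution when $h$ is singular at the origin --- and not only for $\gamma_1>1$: even for $0<\gamma_1\le 1$ the entropy framework only gives $h(u)fT_k(u)\in L^1(\Omega)$ via \eqref{ent0}, not $h(u)f\in L^1(\Omega)$. The fix, which the paper uses, is to exploit that $P$ vanishes linearly at $0$: by \eqref{as1} one has $(1+T_\sigma(u))^\eta-1\le \underline c\,T_\sigma(u)\le \underline c\,T_{s_2}(u)$ on $\{u\le s_2\}$, so that
\[
\int_{\{u\le s_2\}} h(u)fP(u)\le \underline c\int_\Omega h(u)f\,T_{s_2}(u),
\]
and the right-hand side is finite by \eqref{ent0} with $\varphi=0$ and $k=s_2$. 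With this adjustment your argument is complete and coincides with the paper's.
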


\begin{proof}
Let us firstly prove \eqref{P-formu}. Note that for $l>0$, since, by Definition \ref{defent}, $T_l(u)\in W^{1,p}_0(\Omega)\cap L^\infty(\Omega)$, one has
	$
	\varphi=T_l(u)-P(u) \in W_{0}^{1,p}(\Omega)\cap L^\infty(\Omega).
	$
	Hence, we can choose $\varphi$ as test function in \eqref{ent} with
	$
	k>\|P(u)\|_{L^{\infty}([0,\infty))},
	$
	obtaining
	\begin{equation}
		\label{P-1}
		\int_{\Omega} a(x,u,\nabla u)\cdot\nabla T_k(u-T_l(u)+P(u)) \le \int_{\Omega}h(u)fT_k(u-T_l(u)+P(u)).
	\end{equation}
	As regards the right-hand side of \eqref{P-1}, using that $0\leq P(u)< k$, we deduce that $h(u)fT_k(u-T_l(u)+P(u))$ converges to $h(u)fP(u)$ almost everywhere in $\Omega$ as $l\to \infty$. Moreover $h(u)fT_k(u-T_l(u)+P(u))\leq h(u)fT_k(u+P(u))$ and, by \eqref{ent0}$_2$, $h(u)fT_k(u+P(u))\in L^1(\Omega)$, so that one can apply the Lebesgue Theorem yielding to
	\begin{equation}
		\label{P-2}
		\lim_{l\to \infty} \int_{\Omega}h(u)fT_k(u-T_l(u)+P(u))=\int_{\Omega} h(u)fP(u).
	\end{equation}
	Now we focus on the left-hand side of \eqref{P-1}. Again, since $P(u)<k$, by \eqref{defTk}, we deduce
	\begin{equation}
		\begin{aligned}
			\label{P-3}
			\int_{\Omega} a(x,u,\nabla u)\cdot\nabla T_k(u-&T_l(u)+P(u))  = \int_{\{u\leq l\}} a(x,u,\nabla u)\cdot \nabla P(u) \\
			&+ \int_{\{u>l\}\cap\{u-T_l(u)+P(u)\leq k\}} a(x,u,\nabla u)\cdot \nabla u  \\
			&+ \int_{\{u>l\}\cap\{u-T_l(u)+P(u)\leq k\}} a(x,u,\nabla u)\cdot \nabla P(u)   \\
			&\stackrel{\eqref{cara1}} \geq  \int_{\{u\leq l\}} a(x,u,\nabla u)\cdot \nabla P(u),
		\end{aligned}
	\end{equation}
	where in the last inequality we also use that, by assumption, $a(x,u,\nabla u)\cdot \nabla P(u)\geq 0$. Hence, applying the Fatou Lemma to \eqref{P-3}, we have
	\begin{equation}
		\begin{aligned}
			\label{P-4}
			&\liminf_{l\to \infty} \int_{\Omega} a(x,u,\nabla u)\cdot\nabla T_k(u-T_l(u)+P(u))
			\\& \geq  \liminf_{l\to \infty} \int_{\{u\leq l\}} a(x,u,\nabla u)\cdot \nabla P(u)
			\\& \geq  \int_{\Omega} a(x,u,\nabla u)\cdot \nabla P(u).
		\end{aligned}
	\end{equation}
	Therefore, letting $l$ go to infinity in \eqref{P-1} and using \eqref{P-4} and \eqref{P-2}, we obtain \eqref{P-formu}.

	\medskip

	Now we focus on proving \eqref{d14}. Let us consider
	$$
	P(u)=(T_\sigma(u)+1)^{\eta}-1,
	$$
	with $\sigma,\eta>0$. Hence, by Definition \ref{defent}, we have that $P(u)\in \sob\cap L^\infty(\Omega)$, and, by \eqref{cara1} and \eqref{defTk}, that $a(x,u,\nabla u)\cdot \nabla P(u)\geq 0$, where
	\begin{equation}
		\label{piccolaere}
		\nabla P(u)=\eta(T_\sigma(u)+1)^{\eta-1}\nabla T_\sigma(u) \in L^p(\Omega).
	\end{equation}
	Therefore it follows from \eqref{P-formu} that
	\begin{equation}
		\begin{aligned}
			\label{d14bis}
			\int_{\Omega} a(x,u,\nabla u)\cdot \nabla ((T_\sigma(u)+1)^{\eta}-1) &\leq  \int_{\Omega} h(u)f((T_\sigma(u)+1)^{\eta}-1) \\
			&\stackrel{\eqref{as1},\eqref{h2},\eqref{as3}}\leq  \underline c\int_{\{u\leq s_2\}} h(u)f T_{s_2}(u)  \\
			& + \overline c c_2\int_{\{u> s_2\}} fT_{\sigma}(u)^{\eta-\gamma_2}  \\
			&\stackrel{\eqref{ent0}}\leq  C \left(1+ \int_{\{u> s_2\}} fT_{\sigma}(u)^{\eta-\gamma_2}\right).
		\end{aligned}
	\end{equation}
	Moreover, we have
	\begin{equation}
		\begin{aligned}
			\label{d14tris}
			& \int_{\Omega} a(x,u,\nabla u)\cdot \nabla ((T_\sigma(u)+1)^{\eta}-1) \\
			& \stackrel{\eqref{piccolaere}}=\eta \int_{\Omega} a(x,T_{\sigma}(u),\nabla T_\sigma(u))\cdot \nabla T_\sigma(u) (T_\sigma(u)+1)^{\eta-1} \\
			& \stackrel{\eqref{cara1},\eqref{b}}\geq C  \int_{\Omega} |\nabla( (1+T_\sigma(u))^{\overline{\eta}}-1)|^p,
		\end{aligned}
	\end{equation}
	where $\displaystyle \overline{\eta}:=\frac{\eta + (p-1)(1-\theta)}{p}$. Thus the result follows from \eqref{d14bis} and \eqref{d14tris}.
\end{proof}

We state and prove the second auxiliary result:
\begin{lemma}
	\label{aux}
	Let $1\leq p <N$. Let $v$ be a measurable function such that $T_k(v)\in \sob$ for every $k>0$. Assume that there exist $c_1>0$ and $k_0>0$ not depending on $v$ such that
	\begin{equation}
		\label{aux1}
		\int_\Omega |\nabla T_k(v)|^p \leq c_1k^\eta \qquad \forall k\geq k_0,
	\end{equation}
	for some $0\leq \eta<p$. Then $v\in M^{\frac{N(p-\eta)}{N-p}}(\Omega)$ and
	\begin{equation}
		\label{aux2}
		|\{v\geq k\}|\leq \frac{(c_1\mathcal{S}^{p})^{\frac{N}{N-p}}}{k^\frac{N(p-\eta)}{N-p}} \qquad\forall k\geq k_0.
	\end{equation}
	Moreover $|\nabla v|\in M^{\frac{N(p-\eta)}{N-\eta}}(\Omega)$ and there exist $c_2>0$ and $\lambda_0>0$ not dependent on $v$ such that
	\begin{equation}
		\label{aux6}
		|\{|\nabla v|\geq \lambda\}|\leq \frac{c_2}{\lambda^{\frac{N(p-\eta)}{N-\eta}}} \qquad \forall \lambda \geq \lambda_0.
	\end{equation}
\end{lemma}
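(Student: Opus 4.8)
The plan is to derive the two Marcinkiewicz estimates from the energy bound \eqref{aux1} by combining the Sobolev inequality with a standard layer-cake (Chebyshev on level sets) argument. First I would prove \eqref{aux2}: for $k\geq k_0$ the Sobolev inequality applied to $T_k(v)\in\sob$ gives
$$
\mathcal{S}^p\int_\Omega |\nabla T_k(v)|^p \geq \left(\int_\Omega |T_k(v)|^{p^*}\right)^{\frac{p}{p^*}} \geq \left(k^{p^*}|\{|v|\geq k\}|\right)^{\frac{p}{p^*}} = k^p |\{|v|\geq k\}|^{\frac{p}{p^*}},
$$
where the middle step uses $|T_k(v)|\geq k$ on $\{|v|\geq k\}$. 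Plugging in \eqref{aux1} yields $k^p|\{|v|\geq k\}|^{\frac{p}{p^*}}\leq c_1\mathcal{S}^p k^\eta$, and raising to the power $\frac{p^*}{p}=\frac{N}{N-p}$ gives exactly \eqref{aux2}. (Since $v$ is the entropy-type function here I would just work with $|v|$, or note $v\geq 0$ in the application.) The condition $\eta<p$ guarantees the exponent $\frac{N(p-\eta)}{N-p}$ is positive, so $v\in M^{\frac{N(p-\eta)}{N-p}}(\Omega)$ by definition \eqref{marc-eq}.

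Next I would handle the gradient estimate \eqref{aux6}, which is the more delicate part. The idea is to split $\{|\nabla v|\geq\lambda\}$ according to the size of $v$: for any $k>0$,
$$
\{|\nabla v|\geq \lambda\}\subseteq \{|v|\geq k\}\cup \{|v|<k,\ |\nabla v|\geq \lambda\}\subseteq \{|v|\geq k\}\cup\{|\nabla T_k(v)|\geq \lambda\},
$$
using that $\nabla T_k(v)=\nabla v$ on $\{|v|<k\}$ (in the sense of Lemma \ref{dalmaso2}). On the first set we use \eqref{aux2}; on the second set Chebyshev's inequality together with \eqref{aux1} gives $|\{|\nabla T_k(v)|\geq \lambda\}|\leq \lambda^{-p}\int_\Omega|\nabla T_k(v)|^p\leq c_1 k^\eta\lambda^{-p}$. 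Hence for $k\geq k_0$,
$$
|\{|\nabla v|\geq \lambda\}|\leq \frac{(c_1\mathcal{S}^p)^{\frac{N}{N-p}}}{k^{\frac{N(p-\eta)}{N-p}}}+\frac{c_1 k^\eta}{\lambda^p}.
$$
Now I would optimize in $k$: the two terms balance when $k^{\frac{N(p-\eta)}{N-p}+\eta}\sim \lambda^p$, i.e.\ $k\sim \lambda^{p\frac{N-p}{N(p-\eta)+\eta(N-p)}}=\lambda^{\frac{p(N-p)}{N(p-\eta)}}$ (using $\tfrac{N(p-\eta)}{N-p}+\eta=\tfrac{N(p-\eta)+\eta(N-p)}{N-p}=\tfrac{Np-\eta p}{N-p}=\tfrac{p(N-\eta)}{N-p}$). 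Substituting this choice of $k$ into either term yields $|\{|\nabla v|\geq\lambda\}|\leq c_2\lambda^{-\frac{N(p-\eta)}{N-\eta}}$, which is \eqref{aux6}. The only caveat is that $k$ must satisfy $k\geq k_0$, which forces $\lambda\geq\lambda_0$ for a suitable threshold $\lambda_0$ depending on $k_0,c_1,\mathcal{S},N,p,\eta$; for $\lambda<\lambda_0$ the bound is trivial since $|\{|\nabla v|\geq\lambda\}|\leq|\Omega|$ and one can absorb $|\Omega|$ into the constant after enlarging $c_2$.

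The main obstacle, such as it is, is bookkeeping rather than conceptual: one must (i) be careful that $\nabla v$ means the Lemma \ref{dalmaso2} object so that the inclusion $\{|v|<k,|\nabla v|\geq\lambda\}=\{|\nabla T_k(v)|\geq\lambda\}\cap\{|v|<k\}$ is legitimate even when $v\notin W^{1,1}_{\rm loc}$, and (ii) track the constraint $k\geq k_0$ carefully through the optimization so that the stated $\lambda_0$ exists and $c_2$ genuinely does not depend on $v$. Both the exponent identity $\tfrac{N(p-\eta)}{N-p}+\eta=\tfrac{p(N-\eta)}{N-p}$ and the resulting final exponent $\tfrac{N(p-\eta)}{N-\eta}$ should be double-checked, but they are routine once the balancing is set up. I would present the proof in the order: Sobolev $+$ Chebyshev for \eqref{aux2}; then the two-set splitting, Chebyshev on the truncation, and optimization in $k$ for \eqref{aux6}.
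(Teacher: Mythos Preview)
Your proposal is correct and follows essentially the same route as the paper's own proof: Sobolev inequality plus a level-set (Chebyshev) bound to get \eqref{aux2}, then the two-set splitting $\{|\nabla v|\geq\lambda\}\subseteq\{|v|\geq k\}\cup\{|\nabla T_k(v)|\geq\lambda\}$ combined with \eqref{aux1} and \eqref{aux2}, followed by optimization in $k$. The paper phrases the last step as minimizing the right-hand side in $k$ rather than balancing the two terms, but the outcome is the same optimal scale $k=C\lambda^{\frac{N-p}{N-\eta}}$. One small slip to fix in your write-up: in the line ``$k\sim\lambda^{p\frac{N-p}{N(p-\eta)+\eta(N-p)}}=\lambda^{\frac{p(N-p)}{N(p-\eta)}}$'' the denominator should simplify to $p(N-\eta)$, not $N(p-\eta)$, giving $k\sim\lambda^{\frac{N-p}{N-\eta}}$; you compute this correctly in the parenthetical that follows, so it is just a transcription error.
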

\begin{proof}
	Let's start by proving that, if \eqref{aux1} holds, then $v\in M^{\frac{N(p-\eta)}{N-p}}(\Omega)$. Applying the Sobolev inequality to the left-hand side of \eqref{aux1}, we obtain, for every $k\geq k_0$, that
	$$
	\mathcal{S}^{-p}k^{p}|\{v\geq k\}|^{\frac{N-p}{N}}=\mathcal{S}^{-p}\left(\int_{\{v\geq k\}}k^{p^*}\right)^{\frac{p}{p^*}}\stackrel{\eqref{defTk}}\leq\mathcal{S}^{-p}\left(\int_\Omega T_k(v)^{p^*}\right)^{\frac{p}{p^*}}\leq c_1k^\eta.
	$$
	It follows \eqref{aux2} and, so, $v\in M^{\frac{N(p-\eta)}{N-p}}(\Omega)$. \\
	Now we focus on $\nabla v$. Let $\lambda>0$ and $k\geq k_0$. We have
	\begin{equation}
		\label{aux3}
		\begin{aligned}
			|\{|\nabla v|\geq \lambda\}|&=|\{|\nabla v|\geq \lambda, |v|<k\}| + |\{|\nabla v|
			\geq \lambda, |v|\geq k\}|
			\\
			&\leq |\{|\nabla T_k(v)|\geq \lambda\}| + |\{|v|\geq k\}|.
		\end{aligned}	
	\end{equation}
	As regard the first addend on the right-hand side of \eqref{aux3}, we have
	\begin{equation}
		\label{aux4}
		|\{|\nabla T_k(v)|\geq \lambda\}|=\int_{\{|\nabla T_k(v)|\geq \lambda\}} 1\leq \int_\Omega \frac{|\nabla T_k(v)|^p}{\lambda^p}\stackrel{\eqref{aux1}}\leq  \frac{c_1k^{\eta}}{\lambda^p}.
	\end{equation}
	Inserting \eqref{aux2} and \eqref{aux4} in \eqref{aux3}, we deduce
	\begin{equation}
		\label{aux5}
		|\{|\nabla v|\geq \lambda\}|\leq  \frac{c_1k^{\eta}}{\lambda^p} + \frac{(c_1\mathcal{S}^p)^{\frac{N}{N-p}}}{k^\frac{N(p-\eta)}{N-p}} \qquad\forall \lambda>0 \text{ and }k\geq k_0.
	\end{equation}
	Minimizing the function on the right-hand side of \eqref{aux5} with respect to $k$, we obtain that the minimizer is attained at
	$$
	k=C\lambda^{\frac{N-p}{N-\eta}},
	$$
	where $C$ depends on $N, p, \eta, c_1, \mathcal{S}$. Furthermore, there exists $\lambda_0>0$ such that $C\lambda^{\frac{N-p}{N-\eta}}\geq k_0$, for every $\lambda\geq \lambda_0$. Hence, choosing $\lambda\geq \lambda_0$ and evaluating the right-hand side of \eqref{aux5} at this point of minimum, we obtain \eqref{aux6}. It follows that $|\nabla v|\in M^{\frac{N(p-\eta)}{N-\eta}}(\Omega)$.
\end{proof}

\subsection{Unbounded solutions}

We first state and prove two regularity results if $\displaystyle \theta<1+\frac{\gamma_2}{p-1}$. The first one is the following:

\begin{lemma}
\label{lem_sum_m=1}
Let $a$ satisfy \eqref{cara1} and \eqref{b} with $\theta<1+\frac{\gamma_2}{p-1}$. Let $h$ satisfy \eqref{h2} and let $f\in L^1(\Omega)$ be nonnegative. Then any entropy solution $u$ to \eqref{pbmain} is such that:
	\begin{itemize}
		\item[$(i)$] if $\gamma_2\geq 1$ and $0\leq \theta \leq \frac{\gamma_2-1}{p-1}$, then $u\in\sob$; \\
		\item[$(ii)$] if $\max\left(0,\frac{\gamma_2-1}{p-1}\right) \leq \theta <1+\frac{\gamma_2}{p-1}$, then $u\in M^t(\Omega)$ and $|\nabla u|\in M^r(\Omega)$, where
		\begin{equation*}
			t=\frac{N((p-1)(1-\theta)+\gamma_2)}{N-p} \quad \text{and} \quad r=\frac{N((p-1)(1-\theta)+\gamma_2)}{N-\theta(p-1)-1+\gamma_2}.
		\end{equation*}
	\end{itemize}
\end{lemma}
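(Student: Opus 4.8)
The plan is to obtain both statements from the differential inequality \eqref{d14} of Lemma \ref{P-test} together with the abstract Marcinkiewicz estimate of Lemma \ref{aux}; the only creative ingredient is to choose the free exponent $\eta$ in \eqref{d14} so that its right-hand side is controlled by a power of $\sigma$ and so that $\overline\eta$ falls in the range we need.

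\emph{Part $(i)$.} Since $\gamma_2\ge 1$, the value $\eta=\gamma_2>0$ is admissible in \eqref{d14}, and then $T_\sigma(u)^{\eta-\gamma_2}\equiv 1$, so the right-hand side of \eqref{d14} is bounded by $C(1+\|f\|_{L^1(\Omega)})$, uniformly in $\sigma$. A one-line computation shows that $\theta\le\frac{\gamma_2-1}{p-1}$ is equivalent to $\overline\eta=\frac{\gamma_2+(p-1)(1-\theta)}{p}\ge 1$; therefore
$$
|\nabla T_\sigma(u)|=\overline\eta^{-1}(1+T_\sigma(u))^{1-\overline\eta}\,\big|\nabla\big((1+T_\sigma(u))^{\overline\eta}-1\big)\big|\le \overline\eta^{-1}\,\big|\nabla\big((1+T_\sigma(u))^{\overline\eta}-1\big)\big|,
$$
whence $\sup_\sigma\int_\Omega|\nabla T_\sigma(u)|^p<\infty$. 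Letting $\sigma\to\infty$ and identifying the weak limit of $\{T_\sigma(u)\}$ in $\sob$ with $u$ (legitimate because $u$ is almost everywhere finite, so $T_\sigma(u)\to u$ a.e.), together with Poincar\'e's inequality to control $\|u\|_{L^p(\Omega)}$, gives $u\in\sob$.

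\emph{Part $(ii)$.} Here I would instead take $\eta=1+(p-1)\theta$ (admissible, $\eta>0$), which makes $\overline\eta=1$ identically, so the left-hand side of \eqref{d14} is simply $\int_\Omega|\nabla T_\sigma(u)|^p$. The hypothesis $\theta\ge\max\big(0,\frac{\gamma_2-1}{p-1}\big)$ is exactly $\eta-\gamma_2\ge 0$, hence $T_\sigma(u)^{\eta-\gamma_2}\le\sigma^{\eta-\gamma_2}$ and, for $\sigma\ge 1$,
$$
\int_\Omega|\nabla T_\sigma(u)|^p\le C\sigma^{\eta-\gamma_2}.
$$
Renaming $\sigma=k$, this is precisely hypothesis \eqref{aux1} of Lemma \ref{aux} with exponent $\eta-\gamma_2$, and the bound $\theta<1+\frac{\gamma_2}{p-1}$ is exactly $\eta-\gamma_2<p$, so Lemma \ref{aux} applies and yields $u\in M^{\frac{N(p-(\eta-\gamma_2))}{N-p}}(\Omega)$ and $|\nabla u|\in M^{\frac{N(p-(\eta-\gamma_2))}{N-(\eta-\gamma_2)}}(\Omega)$. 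Since $p-(\eta-\gamma_2)=(p-1)(1-\theta)+\gamma_2$ and $N-(\eta-\gamma_2)=N-\theta(p-1)-1+\gamma_2$, these are the asserted exponents $t$ and $r$.

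I do not expect a substantial obstacle: once the exponents $\eta$ are chosen, everything reduces to bookkeeping. The only genuinely delicate point is the passage $\sigma\to\infty$ in part $(i)$ — making sure the limit of the truncations is $u$ in $\sob$, so that the zero trace is preserved — and checking that the threshold inequalities are saturated exactly at $\theta=\frac{\gamma_2-1}{p-1}$, which is what makes cases $(i)$ and $(ii)$ consistent there (both give $u\in\sob\hookrightarrow M^{p^*}(\Omega)$ with $|\nabla u|\in L^p(\Omega)\subset M^p(\Omega)$) and what produces the announced continuity of $t,r$ as $\theta\to 1+\frac{\gamma_2}{p-1}$.
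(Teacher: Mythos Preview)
Your proof is correct and follows essentially the same route as the paper: both arguments are applications of \eqref{d14} with a judicious choice of $\eta$, followed (in case $(ii)$) by Lemma~\ref{aux}. The only cosmetic difference is in part~$(i)$: the paper uses the single choice $\eta=\theta(p-1)+1$ for both cases (so $\overline\eta=1$ always) and observes that in case~$(i)$ the exponent $\theta(p-1)+1-\gamma_2\le 0$ makes the right-hand side of \eqref{somml1} bounded on $\{u>s_2\}$, whereas you keep the right-hand side trivial by taking $\eta=\gamma_2$ and instead exploit $\overline\eta\ge 1$ to bound the gradient; the two tricks are dual and equally short.
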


\begin{remark}
	\label{rem1}
	Let us highlight that in the previous lemma one gets that
	$$
	r>1 \ \ \text{if and only if} \ \ 0 \leq \theta < \frac{N}{N-1}+\frac{\gamma_2-1}{p-1}.
	$$
	Hence let us observe that, since $p<N$, we have
	$$
	\max\left(0,\frac{\gamma_2-1}{p-1}\right)<\frac{N}{N-1}+\frac{\gamma_2-1}{p-1}<1+\frac{\gamma_2}{p-1}.
	$$
	It follows that, if
	\begin{equation*}
		\max\left(0,\frac{\gamma_2-1}{p-1}\right)\leq \theta< \frac{N}{N-1}+\frac{\gamma_2-1}{p-1},
	\end{equation*}
	then $u$ belongs to $W^{1,q}_0(\Omega)$, for every $q<r$. \\
	We also underline that
	$$
	r>p-1 \qquad\text{if and only if}\qquad 0\leq \theta < \frac{1}{N-p+1} + \frac{\gamma_2}{p-1}.
	$$
	Note that, once again, since $p<N$,
	$$
	\max\left(0,\frac{\gamma_2-1}{p-1}\right)<\frac{1}{N-p+1} + \frac{\gamma_2}{p-1}<1+\frac{\gamma_2}{p-1}.
	$$
\end{remark}

If $f$ is more than integrable, namely if $1<m<\frac{N}{p}$, we have the following regularity properties for any entropy solution to \eqref{pbmain}.

\begin{lemma}\label{lem_sum_m>1}
Under the assumptions of Lemma \ref{lem_sum_m=1}, assume that $f\in L^m(\Omega)$ with $m\geq 1$. If $1<m<\frac{N}{p}$, then any entropy solution $u$ to \eqref{pbmain} is such that $u^{\frac{Nm((p -1)(1-\theta)+\gamma_2)}{N-mp}} \in L^1(\Omega)$. Moreover the following hold:
\begin{itemize}
\item[$(i)$] if $m \ge \max\left(\frac{p^*}{p^*-\theta(p-1)-1+\gamma_2},1\right)$, then $u \in W^{1,p}_0(\Omega)$;
\item[$(ii)$]	if $1< m < \frac{p^*}{p^*-\theta(p-1)-1+\gamma_2}$, then $|\nabla u|^\frac{Nm((p-1)(1-\theta)+\gamma_2)}{N-m(\theta(p-1)+1-\gamma_2)} \in L^1(\Omega)$.
	\end{itemize}
\end{lemma}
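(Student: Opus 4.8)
The plan is to use the a priori estimate \eqref{d14} of Lemma \ref{P-test} as a self-improving (feedback) inequality, tuning its free parameter $\eta$ against the summability exponent $m$ of $f$ (here $m'=m/(m-1)$).

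\emph{Step 1: the optimal power of $u$.} Fix $\sigma>0$ and apply \eqref{d14} with an exponent $\eta>0$ to be chosen. On the left-hand side use the Sobolev inequality, bounding $\int_\Omega|\nabla((1+T_\sigma(u))^{\overline{\eta}}-1)|^p$ from below by $\mathcal{S}^{-p}\|(1+T_\sigma(u))^{\overline{\eta}}-1\|_{L^{p^*}(\Omega)}^p$, and note that, since $\overline{\eta}>0$, this quantity controls $\left(\int_\Omega T_\sigma(u)^{\overline{\eta}p^*}\right)^{p/p^*}$ up to an additive constant. On the right-hand side apply Hölder with exponents $m$, $m'$ to $\int_{\{u>s_2\}}fT_\sigma(u)^{\eta-\gamma_2}$, obtaining $\|f\|_{L^m(\Omega)}\left(\int_\Omega T_\sigma(u)^{(\eta-\gamma_2)m'}\right)^{1/m'}$. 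I then choose $\eta$ so that $(\eta-\gamma_2)m'=\overline{\eta}p^*$; solving this linear equation (recall $\overline{\eta}=\frac{\eta+(p-1)(1-\theta)}{p}$, and $pm'>p^*$ because $mp<N$) produces an admissible $\eta>0$ — positivity uses $\theta<1+\frac{\gamma_2}{p-1}$ — and a direct simplification gives $\overline{\eta}p^*=s_0:=\frac{Nm((p-1)(1-\theta)+\gamma_2)}{N-mp}$. Setting $X_\sigma:=\int_\Omega T_\sigma(u)^{s_0}$, which is finite since $T_\sigma(u)\in L^\infty(\Omega)$, the estimate reads $X_\sigma^{p/p^*}\le C(1+\|f\|_{L^m(\Omega)}X_\sigma^{1/m'})$; since $mp<N$ is equivalent to $\frac{p}{p^*}>\frac{1}{m'}$, Young's inequality absorbs the last term and yields $X_\sigma\le C$ uniformly in $\sigma$. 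Fatou's lemma then gives $u^{s_0}\in L^1(\Omega)$, which is the main assertion.

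\emph{Step 2: case $(i)$.} Assume $m\ge\max\!\big(\frac{p^*}{p^*-\theta(p-1)-1+\gamma_2},1\big)$, the denominator being positive because $\theta<1+\frac{\gamma_2}{p-1}$ forces $1+\theta(p-1)-\gamma_2<p<p^*$. Apply \eqref{d14} again, now with $\overline{\eta}=1$, i.e. $\eta=1+\theta(p-1)$, so that its left-hand side is exactly $\int_\Omega|\nabla T_\sigma(u)|^p$. Its right-hand side features $\int_{\{u>s_2\}}fT_\sigma(u)^{1+\theta(p-1)-\gamma_2}$, which by Hölder and Step 1 is finite uniformly in $\sigma$ precisely when $(1+\theta(p-1)-\gamma_2)m'\le s_0$ — automatic if $1+\theta(p-1)-\gamma_2\le 0$ (and then the threshold is $\le 1$), and otherwise equivalent, after a short computation, to $m\ge\frac{p^*}{p^*-\theta(p-1)-1+\gamma_2}$. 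Letting $\sigma\to\infty$ and using Fatou gives $\int_\Omega|\nabla u|^p<\infty$; since $T_k(u)\in W^{1,p}_0(\Omega)$ for every $k$, this yields $u\in W^{1,p}_0(\Omega)$.

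\emph{Step 3: case $(ii)$ and the main difficulty.} If $1<m<\frac{p^*}{p^*-\theta(p-1)-1+\gamma_2}$ one cannot reach $\overline{\eta}=1$, but with the optimal $\eta$ of Step 1 the right-hand side of \eqref{d14} is finite uniformly in $\sigma$ (since $(\eta-\gamma_2)m'=s_0$ forces $fT_\sigma(u)^{\eta-\gamma_2}\in L^1(\Omega)$ uniformly), so letting $\sigma\to\infty$ gives $(1+u)^{\overline{\eta}}-1\in W^{1,p}_0(\Omega)$, that is $(1+u)^{\overline{\eta}-1}\nabla u\in L^p(\Omega)^N$; moreover $\overline{\eta}<1$ here, which is exactly the range complementary to Step 2 (one checks $\overline{\eta}\ge 1\Leftrightarrow s_0\ge p^*\Leftrightarrow m\ge\frac{p^*}{p^*-\theta(p-1)-1+\gamma_2}$). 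Writing $|\nabla u|=\overline{\eta}^{-1}(1+u)^{1-\overline{\eta}}|\nabla((1+u)^{\overline{\eta}}-1)|$ and applying Hölder with exponents $\frac{p}{q}$ and $\frac{p}{p-q}$, one gets $\int_\Omega|\nabla u|^q<\infty$ whenever $q(1-\overline{\eta})\frac{p}{p-q}\le s_0$; optimizing in $q$ (the largest admissible value is $q=\frac{\overline{\eta}p^*p}{p+\overline{\eta}(p^*-p)}$) and substituting the value of $\overline{\eta}$ from Step 1 gives $q=\frac{Nm((p-1)(1-\theta)+\gamma_2)}{N-m(\theta(p-1)+1-\gamma_2)}$, as claimed. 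The genuinely delicate point is Step 1: one must make the parameter choice $(\eta-\gamma_2)m'=\overline{\eta}p^*$, then verify that the resulting $\eta$ is strictly positive (this is where $\theta<1+\frac{\gamma_2}{p-1}$ enters) and that $\frac{p}{p^*}>\frac{1}{m'}$ so that the feedback inequality actually closes — both reducing to the standing restriction $mp<N$. The exponent bookkeeping in Steps 2--3 is then routine algebra.
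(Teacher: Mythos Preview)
Your proof is correct and follows essentially the same route as the paper: apply \eqref{d14} with the self-improving choice $(\eta-\gamma_2)m'=\overline\eta\,p^*$, combine Sobolev on the left with H\"older on the right, and close the feedback via Young because $p/p^*>1/m'$. The only differences are cosmetic: in Step~1 you keep $\sigma$ finite and bound $X_\sigma$ uniformly before applying Fatou, whereas the paper passes to the liminf first and then manipulates; and in part $(i)$ you use the summability from Step~1 to control $\int f\,u^{1+\theta(p-1)-\gamma_2}$ directly, while the paper instead bounds this by $\|f\|_{L^m}\big(\int u^{p^*}\big)^{(1+\theta(p-1)-\gamma_2)/p^*}$, feeds $\int u^{p^*}$ back into $\int|\nabla u|^p$ via Sobolev, and absorbs. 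Both routes give the same threshold on $m$.
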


Note that $\frac{Nm((p -1)(1-\theta)+\gamma_2)}{N-mp}$ and $\frac{Nm((p-1)(1-\theta)+\gamma_2)}{N-m(\theta(p-1)+1-\gamma_2)}$ (the exponents of Lemma \ref{lem_sum_m>1}) converge to $t$ and $r$ (the exponents of Lemma \ref{lem_sum_m=1}) respectively as $m$ tends to $1$.\\
Moreover, the previous converge both to $0$ as $\theta$ converges to $1+\frac{\gamma_2}{p-1}$ for every $1\leq m<\frac{N}{p}$. On the other hand, $\frac{p^*}{p^*-\theta(p-1)-1+\gamma_2}$ converges to $\frac{N}{p}$ as $\theta$ converges to $1+\frac{\gamma_2}{p-1}$. This seems to suggest that one can find entropy solutions with finite energy for $m\geq \frac{N}{p}$ if $\theta=1+\frac{\gamma_2}{p-1}$; in particular this will be the content of Lemma \ref{crit^2} below. By the way, it also leads to the idea that solutions should not have any Marcinkiewicz regularity when $1\leq m<\frac{N}{p}$ and $\theta=1+\frac{\gamma_2}{p-1}$.

\medskip

Now we are ready to prove the regularity lemmas. We start by proving the case with $f$ merely integrable.

\begin{proof}[Proof of Lemma \ref{lem_sum_m=1}]
Let us apply \eqref{d14} with $\eta=\theta(p-1)+1$ obtaining that
	\begin{equation}\label{somml1}
			\int_\Omega |\nabla T_\sigma(u)|^p \le C\left(1+ \int_{\{u> s_2\}} fT_{\sigma}(u)^{\theta(p-1)+1-\gamma_2}\right),
	\end{equation}
for every $\sigma>0$.

\smallskip

	\textit{Proof of $(i)$.} In this case one deduces from \eqref{somml1} that
	\begin{equation*}
		\int_\Omega |\nabla T_\sigma(u)|^p \leq C,
	\end{equation*}
where $C$ is a constant independent of $\sigma$.
Hence, after an application of the Fatou Lemma as $\sigma \to \infty$, since $u$ is almost everywhere finite on $\Omega$, we obtain $(i)$.

\smallskip

\textit{Proof of $(ii)$.}	It follows from \eqref{somml1} that
			\begin{equation*}
				\int_\Omega |\nabla T_\sigma(u)|^p \leq C \sigma^{\theta(p-1)+1-\gamma_2}  \qquad \forall \sigma>0,
			\end{equation*}
		where, once again, $C$ is a constant independent of $\sigma$. Hence, since by assumption $\theta(p-1)+1-\gamma_2<p$, applying Lemma \ref{aux}, we obtain the result.
\end{proof}

Here we prove Lebesgue regularity for an entropy solution and its gradient when $f$ is more than integrable, namely $1<m<\frac{N}{p}$.
\begin{proof}[Proof of Lemma \ref{lem_sum_m>1}]

We first prove the Lebesgue regularity for $u$. Let $\eta>\gamma_2$ and let us apply the Sobolev inequality and the H\"older inequality respectively to the left-hand and to the right-hand side of \eqref{d14}, yielding to
		\begin{equation*}
			\begin{aligned}
				&\frac{1}{\mathcal{S}^p} \left(\int_\Omega \left((1 + T_\sigma(u))^{\frac{\eta + (p-1)(1-\theta)}{p}}-1\right)^{p^*} \right)^{\frac{p}{p^*}}
				\\
				&\le C + C \|f\|_{L^m(\Omega)} \left(\int_{\{u> s_2\}} u^{(\eta-\gamma_2)\frac{m}{m-1}}\right)^{\frac{m-1}{m}}.	
			\end{aligned}
		\end{equation*}
Now taking the liminf as $\sigma\to \infty$ in the previous, thanks to the Fatou Lemma (recall that, by Definition \ref{defent}, $u$ is almost everywhere finite in $\Omega$), we obtain that
		\begin{equation}
			\begin{aligned}\label{regsomm}
				&\frac{1}{\mathcal{S}^p} \left(\int_\Omega \left((1 + u)^{\frac{\eta + (p-1)(1-\theta)}{p}}-1\right)^{p^*} \right)^{\frac{p}{p^*}}
				\\
				&\le C + C \|f\|_{L^m(\Omega)} \left(\int_{\{u> s_2\}} u^{(\eta-\gamma_2)\frac{m}{m-1}}\right)^{\frac{m-1}{m}}.	
			\end{aligned}
		\end{equation}
Let observe that $\eta>\gamma_2$ implies $\eta + (p-1)(1-\theta)>0$ thanks to \eqref{condent}. Hence, for the left-hand side of \eqref{regsomm}, one can reason as follows
		\begin{equation}\label{regsomm2}
			\begin{aligned}
				\int_\Omega \left((1 + u)^{\frac{\eta + (p-1)(1-\theta)}{p}}-1\right)^{p^*} &\ge \int_{\{u > s_2\}} \left((1 + u)^{\frac{\eta + (p-1)(1-\theta)}{p}}-1\right)^{p^*}
				\\
				&\ge c_{t,s_2}\int_{\{u > s_2\}}  u^{\frac{(\eta + (p-1)(1-\theta))p^*}{p}},
			\end{aligned}	
		\end{equation}
where in the last inequality we use that $(1+x)^{t}-1 \ge c_{t,s_2}x^t$, for all $x >s_2$, $t>0$ and where $c_{t,s_2}$ depends only on $t$ and $s_2$. Now, inserting \eqref{regsomm2} in \eqref{regsomm}, one gets that
		\begin{equation}\label{lemsommleb}
			\begin{aligned}
				&\left(\frac{c_{t,s_2}^{\frac{1}{p^*}}}{\mathcal{S}}\right)^p \left(\int_{\{u> s_2\}}  u^{\frac{(\eta + (p-1)(1-\theta))p^*}{p}} \right)^{\frac{p}{p^*}}
				\\
				&\le C + C \|f\|_{L^m(\Omega)} \left(\int_{\{u> s_2\}} u^{(\eta-\gamma_2)\frac{m}{m-1}}\right)^{\frac{m-1}{m}}.
			\end{aligned}
		\end{equation}
		Let us fix
		\begin{equation}\label{eta}
			\eta = \frac{m\gamma_2(N-p) + N(m-1)(p-1)(1-\theta)}{N-mp},
		\end{equation}	
then
$$
{\frac{(\eta + (p-1)(1-\theta))p^*}{p}} = (\eta-\gamma_2)\frac{m}{m-1} = \frac{Nm((p -1)(1-\theta)+\gamma_2)}{N-mp}.
$$
This choice of $\eta$ gives that $\eta>\gamma_2$ thanks to the fact that $\theta<1+\frac{\gamma_2}{p-1}$. Now, since $m<\frac{N}{p}$, then $\frac{p}{p^*}>\frac{m-1}{m}$ and, thus, we deduce that $u^{\frac{Nm((p -1)(1-\theta)+\gamma_2)}{N-mp}}\in L^1(\Omega)$ after an application of the Young inequality.
		
\medskip

{\sl Proof of $(i)$.} Here $m \ge \max\left(\frac{p^*}{p^*-\theta(p-1)-1+\gamma_2},1\right)$ and we prove that $u$ has finite energy. We note that, if $\max\left(\frac{p^*}{p^*-\theta(p-1)-1+\gamma_2},1\right)=1$, then $\gamma_2\geq 1$ and $\theta\leq \frac{\gamma_2-1}{p-1}$, therefore the result follows from Lemma \ref{lem_sum_m=1}. Hence we assume $\frac{p^*}{p^*-\theta(p-1)-1+\gamma_2}>1$. Let us apply  \eqref{d14} with $\eta = \theta(p-1) +1$, yielding to
		\begin{equation}\label{lemregsob}
			\begin{aligned}
				& \int_\Omega |\nabla T_\sigma(u)|^p \le C + C\int_{\{u >s_2\}}   f u^{\theta(p-1) +1-\gamma_2},
			\end{aligned}
		\end{equation}
		where $C$ does not depend on $\sigma$. Moreover, since $\nabla u$ is almost everywhere finite in $\Omega$ (cf. Lemma \ref{lem_sum_m=1}), applying the Fatou Lemma to \eqref{lemregsob} as $\sigma\to \infty$, we deduce
		\begin{equation}\label{lemregsob2}
			\begin{aligned}
				& \int_\Omega |\nabla u|^p \le C + C\int_{\{u > s_2\}}   f u^{\theta(p-1) +1-\gamma_2}.
			\end{aligned}
		\end{equation}
It follows by an application of the H\"older inequality, which is admissible since $m \ge \frac{p^*}{p^*-\theta(p-1)-1+\gamma_2}>1$, and through the Sobolev inequality that
		\begin{equation}\label{lemregsob3}
			\begin{aligned}
				&\int_{\{u > s_2\}}   f u^{\theta(p-1) +1-\gamma_2} \le C\|f\|_{L^{m}(\Omega)} \left(\int_{\Omega} u^{p^*}\right)^{\frac{\theta(p-1) +1-\gamma_2}{p^*}}
				\\
				&\le C\mathcal{S}^{\theta(p-1) +1-\gamma_2}\|f\|_{L^{m}(\Omega)} \left(\int_{\Omega} |\nabla u|^p\right)^{\frac{\theta(p-1) +1-\gamma_2}{p}}.
			\end{aligned}
		\end{equation}
		
		Now, recalling that $\theta < 1 + \frac{\gamma_2}{p-1}$, one can apply the Young inequality to the right-hand side of \eqref{lemregsob3} and, combining with \eqref{lemregsob2}, one gets that $u$ belongs to $W^{1,p}_0(\Omega)$. \\
	
\medskip
		
{\sl Proof of $(ii)$.} Let $1<m<\frac{p^*}{p^*-\theta(p-1)-1+\gamma_2}$. We note that this interval is not empty if and only if $\theta>\frac{\gamma_2-1}{p-1}$. Let $\eta$  be as in \eqref{eta}. By assumption on $m$, we have $\theta(p-1) -\eta +1>0$. Moreover, reasoning as for obtaining \eqref{lemsommleb}, one has that $fu^{\eta-\gamma_2}\chi_{\{u>s_2\}}\in L^1(\Omega)$. Hence one can apply the Fatou Lemma to \eqref{d14} as $\sigma\to \infty$, obtaining that
		\begin{equation}\label{lemsommsob}
			\int_{\Omega} \frac{|\nabla u|^p}{(1+u)^{\theta(p-1) -\eta +1}}\le C + C\int_{\{u> s_2\}} f u^{\eta-\gamma_2}< \infty.
		\end{equation}
Now let $0<q<p$. Then it follows from the H\"older inequality that
		\begin{equation}\label{lemsommsob2}
			\begin{aligned}
				\int_{\Omega} |\nabla u|^q &= \int_{\Omega} \frac{|\nabla
					u|^q}{(1+u)^{\frac{q(\theta(p-1) -\eta +1)}{p}}}(1 + u)^{\frac{q(\theta(p-1) -\eta +1)}{p}}
				\\&\le \left(\int_{\Omega} \frac{|\nabla u|^p}{(1+u)^{\theta(p-1) -\eta +1}}\right)^\frac{q}{p} \left( \int_{\Omega} (1 + u)^{\frac{q(\theta(p-1) -\eta +1)}{p-q}}\right)^{\frac{p-q}{p}}
				\\
				& \stackrel{\eqref{lemsommsob}}{\le} C\left( \int_{\Omega} (1+u)^{\frac{q(\theta(p-1) -\eta +1)}{p-q}}\right)^{\frac{p-q}{p}}.
			\end{aligned}
		\end{equation}
We note that, since $\theta<1+\frac{\gamma_2}{p-1}$, we have $\frac{p^*}{p^*-\theta(p-1)-1+\gamma_2}<\frac{N}{p}$ and then we have already proven that $u^{\frac{Nm((p -1)(1-\theta)+\gamma_2)}{N-mp}}\in L^1(\Omega)$. Hence one can fix  $q=\frac{Nm((p-1)(1-\theta)+\gamma_2)}{N-m(\theta(p-1)+1-\gamma_2)}$ in order to have that the right-hand side of \eqref{lemsommsob2} is finite. This choice of $q$ is admissible, i.e. $0<q<p$, since $\theta<1+\frac{\gamma_2}{p-1}$ and $m<\frac{p^*}{p^*-\theta(p-1)-1+\gamma_2}$. This concludes the proof.
\end{proof}

Now we show that, as observed above, in the limit case $\theta=1+\frac{\gamma_2}{p-1}$, if $f$ is regular enough we have again regularity properties for $u$.
\begin{lemma}
\label{crit^2}
Let $a$ satisfy \eqref{cara1} and \eqref{b} with $\theta=1+\frac{\gamma_2}{p-1}$. Let $h$ satisfy \eqref{h2} and let $f\in L^\frac{N}{p}(\Omega)$ be nonnegative. Then any entropy solution $u$ to \eqref{pbmain} belongs to $W^{1,p}_0(\Omega) \cap L^q(\Omega)$ for every $q<\infty$.
\end{lemma}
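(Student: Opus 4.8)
The plan is to obtain everything from estimate \eqref{d14} of Lemma \ref{P-test}, read for the critical value $\theta=1+\frac{\gamma_2}{p-1}$. For this $\theta$ one has $(p-1)(1-\theta)=-\gamma_2$, hence $\overline{\eta}=\frac{\eta-\gamma_2}{p}$ and $\overline{\eta}\,p^{*}=(\eta-\gamma_2)\frac{N}{N-p}$, so that for every $\eta>\gamma_2$ and every $\sigma>0$
\[
\int_\Omega\left|\nabla\left((1+T_\sigma(u))^{\overline{\eta}}-1\right)\right|^{p}\le C_\eta\left(1+\int_{\{u>s_2\}}fT_\sigma(u)^{\eta-\gamma_2}\right),
\]
with $C_\eta$ independent of $\sigma$ (but depending on $\eta$). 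Bounding the left-hand side from below by the Sobolev inequality together with the elementary estimate $(1+x)^{\overline{\eta}}-1\ge\frac12x^{\overline{\eta}}$ for $x\ge 2^{1/\overline{\eta}}$, and the right-hand side from above by the H\"older inequality with $f\in L^{N/p}(\Omega)$, one gets a control of $\big(\int_{\{u\ge 2^{1/\overline{\eta}}\}}T_\sigma(u)^{\overline{\eta}p^{*}}\big)^{p/p^{*}}$ by a \emph{multiple of itself}, so a plain absorption would require $\|f\|_{L^{N/p}(\Omega)}$ small: this is the main obstacle. I would bypass it by splitting $f$ according to the set $\{u>\lambda\}$ rather than by its size. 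Since $u$ is almost everywhere finite, $|\{u>\lambda\}|\to0$ as $\lambda\to\infty$, hence $\|f\chi_{\{u>\lambda\}}\|_{L^{N/p}(\Omega)}\to0$ by absolute continuity of the integral, and the coefficient produced by that part of the datum can be made as small as needed.

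To prove $u\in L^{q}(\Omega)$ for a given $q<\infty$ I would choose $\eta:=\gamma_2+\frac{(N-p)q}{N}>\gamma_2$, so that $\overline{\eta}\,p^{*}=q$, and split, for $\sigma>\lambda>s_2$,
\[
\int_{\{u>s_2\}}fT_\sigma(u)^{\eta-\gamma_2}\le\lambda^{\eta-\gamma_2}\|f\|_{L^1(\Omega)}+\|f\chi_{\{u>\lambda\}}\|_{L^{N/p}(\Omega)}\left(\int_\Omega T_\sigma(u)^{q}\right)^{p/p^{*}},
\]
the last term by H\"older with exponents $\frac Np$ and $\frac{p^{*}}{p}$. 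Combining with the Sobolev lower bound for the left-hand side, writing $\int_\Omega T_\sigma(u)^{q}\le 2^{q/\overline{\eta}}|\Omega|+\int_{\{u\ge 2^{1/\overline{\eta}}\}}T_\sigma(u)^{q}$, and choosing $\lambda=\lambda(u,f,\eta)$ large enough that the coefficient of $\big(\int_{\{u\ge 2^{1/\overline{\eta}}\}}T_\sigma(u)^{q}\big)^{p/p^{*}}$ is at most half the constant appearing on the left, one absorbs that term and obtains $\int_{\{u\ge 2^{1/\overline{\eta}}\}}T_\sigma(u)^{q}\le C$ with $C$ independent of $\sigma$. Letting $\sigma\to\infty$ by monotone convergence gives $u\in L^{q}(\Omega)$; since $q$ is arbitrary, this holds for every $q<\infty$. (Note that here no bootstrap is needed: for each fixed $\sigma$ the quantity $\int_\Omega T_\sigma(u)^{q}$ is finite by definition of the truncation, and the uniform bound is produced by the absorption.)

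For the finite-energy part I would apply \eqref{d14} once more with the specific choice $\eta=\theta(p-1)+1$, for which $\overline{\eta}=1$ and $\eta-\gamma_2=p$, so that it reads $\int_\Omega|\nabla T_\sigma(u)|^{p}\le C\big(1+\int_{\{u>s_2\}}fT_\sigma(u)^{p}\big)$. Knowing already, by the previous step, that $u\in L^{p^{*}}(\Omega)$, the right-hand side is bounded uniformly in $\sigma$ just by H\"older, $\int_\Omega fT_\sigma(u)^{p}\le\|f\|_{L^{N/p}(\Omega)}\|u\|_{L^{p^{*}}(\Omega)}^{p}$. Hence $\{T_\sigma(u)\}_\sigma$ is bounded in $\sob$ and, since $T_\sigma(u)\to u$ almost everywhere, $u\in\sob$ with $\int_\Omega|\nabla u|^{p}=\lim_{\sigma\to\infty}\int_\Omega|\nabla T_\sigma(u)|^{p}$ by monotone convergence (recall $\nabla T_\sigma(u)=\nabla u\,\chi_{\{u\le\sigma\}}$ by Lemma \ref{dalmaso2}). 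Apart from the small-norm obstruction handled by the set splitting, the only points requiring a little care are that the constant $C_\eta$ in \eqref{d14} depends on $\eta$ (irrelevant, $\eta$ being fixed at each step), and that $(1+x)^{\overline{\eta}}-1\ge\frac12x^{\overline{\eta}}$ only holds away from the origin, the complementary region contributing merely the bounded term $2^{q/\overline{\eta}}|\Omega|$.
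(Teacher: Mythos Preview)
Your proof is correct and rests on the same key idea as the paper's: at the critical value $\theta=1+\frac{\gamma_2}{p-1}$ the Sobolev/H\"older exponents match, so the only way to close the estimate is to make the coefficient $\|f\chi_{\{u>\lambda\}}\|_{L^{N/p}(\Omega)}$ small by absolute continuity of the integral on the superlevel sets of $u$. The implementation differs slightly. The paper does not use \eqref{d14} but goes back to \eqref{P-formu} with a new test function $P(u)=((1+T_\sigma(u))^\eta-(1+k)^\eta)^+(1+T_\sigma(u))^{\gamma_2}$, whose support is $\{u>k\}$; this builds the restriction to a high superlevel set into the test function itself, so that the datum appears directly as $\int_{\{u>k\}}f^{N/p}$ and the absorption is performed by choosing $k=k_0$ large. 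It then takes $\eta=p$ to obtain $u\in\sob$ first, and afterwards lets $\eta$ vary to get $u\in L^q(\Omega)$. You instead reuse the ready-made inequality \eqref{d14}, split the right-hand side as $\int_{\{s_2<u\le\lambda\}}+\int_{\{u>\lambda\}}$, and absorb the second piece; you prove $u\in L^q(\Omega)$ first and then plug $u\in L^{p^*}(\Omega)$ into \eqref{d14} with $\overline\eta=1$ to get $u\in\sob$ without a second absorption. Both routes are equally short; yours has the minor advantage of not introducing a new test function, while the paper's has the advantage of yielding the $\sob$-bound and the $L^q$-bound from a single estimate \eqref{ccc6}.
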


\begin{proof}
Let $\eta>0$ and let us consider
$$
P(u)=((1+T_\sigma(u))^\eta-(1+k)^\eta)^+(1+T_\sigma(u))^{\gamma_2},
$$
where $\sigma>k>s_2$. We note that $\supp(P(u))=\{x\in \Omega : u>k\}$, and that, since $u$ is an entropy solution, thanks to Definition \ref{defent}, $P(u)\in \sob \cap L^\infty(\Omega)$. Moreover, using \eqref{cara1} and after straightforward computations, we obtain $a(x,u,\nabla P(u))\cdot \nabla P(u) \geq 0$ in $\Omega$.
Hence, applying Lemma \ref{P-test} and using \eqref{cara1}, we have
\begin{equation}
\label{ccc1}
\eta \int_{\{u>k\}} a(x,u,\nabla u)\cdot \nabla T_\sigma(u) (1+T_\sigma(u))^{\eta-1+\gamma_2} \leq \int_{\{u>k\}} h(u)fP(u).
\end{equation}
As regard the left-hand side of \eqref{ccc1}, since by assumption $\theta=1+\frac{\gamma_2}{p-1}$, it follows that
\begin{equation}
\begin{aligned}
\label{ccc2}
&\int_{\{u>k\}} a(x,u,\nabla u)\cdot \nabla T_\sigma(u) (1+T_\sigma(u))^{\eta-1+\gamma_2}
\\
&\stackrel{\eqref{cara1},\eqref{b}}\geq \alpha \int_{\{u>k\}} |\nabla T_\sigma(u)|^p (1+T_\sigma(u))^{\eta-p}
\\
& \stackrel{\phantom{\eqref{cara1},\eqref{b}}}= \frac{\alpha p^p}{\eta^p}\int_{\{u>k\}} |\nabla ((1+T_\sigma(u))^{\frac{\eta}{p}})|^p.
\end{aligned}
\end{equation}
Now we focus on the right-hand side of \eqref{ccc1}, obtaining
\begin{equation}
\begin{aligned}
\label{ccc3}
\int_{\{u>k\}} h(u)fP(u) &\stackrel{k>s_2,\eqref{h2}}\leq  c_2\int_{\{u>k\}} \left(1+\frac{1}{u}\right)^{\gamma_2}f((1+T_\sigma(u))^\eta-(1+k)^\eta)
\\
&\leq  c_2(1+s_2^{-1})^{\gamma_2} \int_{\{u>k\}} f((1+T_\sigma(u))^\eta-(1+k)^\eta)
 \\
& \leq  2^p c_2(1+s_2^{-1})^{\gamma_2} \int_{\{u>k\}} f((1+T_\sigma(u))^\frac{\eta}{p}-(1+k)^{\frac{\eta}{p}})^p \\
&\phantom{\leq} + 2^p c_2(1+s_2^{-1})^{\gamma_2}(1+k)^\eta \int_{\{u>k\}} f,
\end{aligned}
\end{equation}
where in the last step we used the inequality
\begin{equation}
\label{mah}
x^s-\tilde k^s \leq 2^s((x-\tilde k)^s+\tilde k^s)
\end{equation}
with $s=p$, $x=(1+T_\sigma(u))^\frac{\eta}{p}$ and $\tilde k=(1+k)^\frac{\eta}{p}$ (note that it is an admissible application since the inequality holds for every $x\geq \tilde k\geq 0$ and for $s>0$). Hence, using first the H\"older inequality and then the Sobolev inequality in the first summand of the last term on the right-hand side of \eqref{ccc3}, we deduce
\begin{equation}
	\begin{aligned}
\label{ccc4}
&\int_{\{u>k\}} h(u)fP(u)
\\
&\leq  2^p c_2(1+s_2^{-1})^{\gamma_2} \left(\int_{\{u>k\}} f^{\frac{N}{p}}\right)^{\frac{p}{N}}\left(\int_{\{u>k\}} ((1+T_\sigma(u))^\frac{\eta}{p}-(1+k)^{\frac{\eta}{p}})^{p^*}\right)^\frac{p}{p^*}  \\
& \phantom{\leq}+ Ck^\eta\|f\|_{L^1(\Omega)}  \\
&\leq  (2\mathcal S)^p c_2(1+s_2^{-1})^{\gamma_2} \left(\int_{\{u>k\}} f^{\frac{N}{p}}\right)^{\frac{p}{N}}\int_{\{u>k\}} |\nabla ((1+T_\sigma(u))^{\frac{\eta}{p}}-(1+k)^{\frac{\eta}{p}})|^p  \\
&\phantom{\leq} +  Ck^\eta\|f\|_{L^1(\Omega)}.
	\end{aligned}
\end{equation}
Thus, inserting \eqref{ccc2} and \eqref{ccc4} in \eqref{ccc1}, after some manipulations, we have
\begin{equation*}
\left(\alpha \frac{p^p}{\eta^p} - (2\mathcal S)^p c_2(1+s_2^{-1})^{\gamma_2} \left(\int_{\{u>k\}} f^{\frac{N}{p}}\right)^{\frac{p}{N}} \right)\int_{\{u>k\}} |\nabla ((1+T_\sigma(u))^{\frac{\eta}{p}})|^p \leq Ck^\eta\|f\|_{L^1(\Omega)}.
\end{equation*}
Since $u$ is almost everywhere finite in $\Omega$, thanks to the absolute continuity of the integral there exists $k_0>s_2$ such that
$$
(2\mathcal S)^p c_2(1+s_2^{-1})^{\gamma_2} \left(\int_{\{u>k_0\}} f^{\frac{N}{p}}\right)^{\frac{p}{N}}\leq \frac{\alpha p^p}{2\eta^p},
$$
which implies that for every $\sigma>k_0$ we have
\begin{equation}
\begin{aligned}
\label{ccc5}
\mathcal S^{-p}\left(\int_{\{u>k_0\}} ((1+T_\sigma(u))^\frac{\eta}{p}-(1+k_0)^{\frac{\eta}{p}})^{p^*}\right)^\frac{p}{p^*} &\leq \int_{\{u>k_0\}} |\nabla ((1+T_\sigma(u))^{\frac{\eta}{p}})|^p
\\
&\leq \frac{2C\eta^p k_0^\eta}{\alpha p^p}\|f\|_{L^1(\Omega)}.
\end{aligned}
\end{equation}
Using again that $u$ is almost everywhere finite in $\Omega$ and that $T_\sigma(u)\in \sob$ for every $\sigma>0$, by Lemma \ref{dalmaso2} we deduce that $\nabla T_\sigma(u)$ converges to $\nabla u$ almost everywhere in $\Omega$ as $\sigma\to \infty$. This implies, after an application of the Fatou Lemma to \eqref{ccc5}, that
\begin{equation}
	\begin{aligned}
\label{ccc6}
\mathcal S^{-p}\left(\int_{\{u>k_0\}} ((1+u)^\frac{\eta}{p}-(1+k_0)^{\frac{\eta}{p}})^{p^*}\right)^\frac{p}{p^*}
&\leq\int_{\{u>k_0\}} |\nabla ((1+u)^{\frac{\eta}{p}})|^p
\\
&\leq \frac{2C\eta^p k_0^\eta}{\alpha p^p}\|f\|_{L^1(\Omega)}.
\end{aligned}
\end{equation}
Now, choosing $\eta=p$ in \eqref{ccc6} and recalling that, since $u$ is an entropy solution, $T_{k_0}(u)\in \sob$, we have $u\in \sob$.
In what follows we prove that $u\in L^q(\Omega)$ for every $1\leq q<\infty$. We note that it is sufficient to prove that $\displaystyle \int_{\{u>k_0\}} u^q< \infty$. It follows from \eqref{mah} with $s=p^*$, $x=(1+u)^{\frac{\eta}{p}}$ and $\tilde k=(1+k_0)^{\frac{\eta}{p}}$ that
\begin{equation}
	\begin{aligned}
\label{ccc7}
\int_{\{u>k_0\}} (1+u)^\frac{N\eta}{N-p}
&\leq 2^{p^*}\left(\int_{\{u>k_0\}} ((1+u)^\frac{\eta}{p}-(1+k_0)^{\frac{\eta}{p}})^{p^*}\right)
\\
&+(2^{p^*}+1)(1+k_0)^\frac{N\eta}{N-p}|\Omega|.
\end{aligned}
\end{equation}
Thanks to \eqref{ccc6}, the right-hand side of \eqref{ccc7} is finite for any $\eta>0$, whence the result follows.
\end{proof}

\subsection{Bounded solutions}

Here we show that, if $f$ is sufficiently regular and \eqref{condent} holds, then any entropy solution to \eqref{pbmain} is bounded.

\begin{lemma}
Let $a$ satisfy \eqref{cara1}, \eqref{b} and \eqref{condent}. Let $h$ satisfy \eqref{h2} and let $f\in L^m(\Omega)$ be nonnegative with $m>\frac{N}{p}$. Then any entropy solution $u$ to \eqref{pbmain} belongs to $L^\infty(\Omega)$.
\end{lemma}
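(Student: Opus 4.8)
The plan is to argue by a Stampacchia level-set technique applied to any entropy solution $u$ of \eqref{pbmain}. For $k>0$ set $A_k:=\{x\in\Omega:\ u(x)>k\}$; the heart of the matter is to produce a value $k_0>0$, depending only on the data, such that
\begin{equation*}
\int_\Omega|\nabla G_k(u)|^p\ \le\ C\,(1+k)^p\,\|f\|_{L^m(\Omega)}\,|A_k|^{\frac1{m'}}\qquad\text{for every }k\ge k_0,
\end{equation*}
$m'$ being the conjugate exponent of $m$. Once this is available, the Sobolev inequality, the bound $G_k(u)\ge h-k$ on $A_h$ and a classical iteration lemma will close the argument. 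The genuine difficulty is the degenerate coercivity: testing with $G_k(u)$ alone yields on the left-hand side $\int b(u)|\nabla G_k(u)|^p$, which is not uniformly coercive where $u$ is large, so one must insert a power of $(1+u)$ in the test function to restore uniform ellipticity without spoiling the right-hand side.

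To this end I would apply Lemma \ref{P-test} with the (bounded) test function
\begin{equation*}
P(u)=\big((1+T_\sigma(u))^p-(1+k)^p\big)^{+}\,(1+T_\sigma(u))^{(p-1)(\theta-1)^{+}},
\end{equation*}
where $\sigma>k\ge\max(1,s_2)$. Since $T_\sigma(u)\in\sob\cap L^\infty(\Omega)$, the function $P(u)$ lies in $\sob\cap L^\infty(\Omega)$, is supported in $\{u>k\}$, and $\nabla P(u)$ is a nonnegative multiple of $\nabla T_\sigma(u)$, so $a(x,u,\nabla u)\cdot\nabla P(u)\ge0$ by \eqref{cara1}; hence \eqref{P-formu} applies. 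On the left-hand side, using \eqref{cara1} and \eqref{b} and discarding a nonnegative term, the chosen exponent is exactly the one for which the weight $(1+T_\sigma(u))^{\,p-1+(p-1)(\theta-1)^{+}-\theta(p-1)}$ is $\ge1$ on $\{k<u<\sigma\}$, whence $\int_\Omega a(x,u,\nabla u)\cdot\nabla P(u)\ge\alpha p\int_\Omega|\nabla G_k(T_\sigma(u))|^p$. On the right-hand side, since $k>s_2$ one invokes \eqref{h2}; the key point is that $u^{-\gamma_2}(1+T_\sigma(u))^{(p-1)(\theta-1)^{+}}\le C$ on $\{u>k\}$ precisely because $(p-1)(\theta-1)^{+}\le\gamma_2$, which is condition \eqref{condent}. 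Combining this with the elementary inequality \eqref{mah} (taken with $s=p$) gives $\int_\Omega h(u)fP(u)\le C\int_{A_k}f\,G_k(T_\sigma(u))^p+C(1+k)^p\int_{A_k}f$.

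It then remains to absorb the first term. By Hölder's and the Sobolev inequality, $\int_{A_k}f\,G_k(T_\sigma(u))^p\le \|f\|_{L^{N/p}(A_k)}\,\mathcal{S}^p\int_\Omega|\nabla G_k(T_\sigma(u))|^p$, and since $|A_k|\to0$ as $k\to\infty$ (because $u$ is a.e.\ finite) and $f\in L^m(\Omega)\subset L^{N/p}(\Omega)$, the absolute continuity of the integral provides $k_0>\max(1,s_2)$ with $C\mathcal{S}^p\|f\|_{L^{N/p}(A_{k_0})}\le\alpha p/2$. Absorbing this term into the left-hand side, then letting $\sigma\to\infty$ (monotone convergence, since $|\nabla G_k(T_\sigma(u))|\uparrow|\nabla G_k(u)|$ with $\nabla u$ understood as in Lemma \ref{dalmaso2}), and using $\int_{A_k}f\le\|f\|_{L^m(\Omega)}|A_k|^{1/m'}$, one obtains the core estimate displayed above for every $k\ge k_0$.

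Finally, for $h>k\ge k_0$ the Sobolev inequality together with $G_k(u)\ge h-k$ on $A_h$ gives $(h-k)^p|A_h|^{p/p^*}\le\mathcal{S}^p\int_\Omega|\nabla G_k(u)|^p$, hence
\begin{equation*}
|A_h|\ \le\ \frac{C\,(1+k)^{p^*}}{(h-k)^{p^*}}\,|A_k|^{\frac{p^*}{p\,m'}},\qquad \frac{p^*}{p\,m'}=\frac{N(m-1)}{m(N-p)}>1,
\end{equation*}
the strict inequality holding exactly because $m>\frac{N}{p}$. Stampacchia's iteration lemma (the mild $k$-dependence of the constant being harmless, since along the iterating sequence $k$ stays in a bounded interval, so one may enlarge $k_0$ and fix a length $d$ for which the iteration converges) then yields $|A_{k_0+d}|=0$ for $d$ large, i.e.\ $u\le k_0+d$ a.e.\ in $\Omega$, so $u\in L^\infty(\Omega)$. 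I expect the main obstacle to be precisely the first displayed estimate: selecting the weight $(1+T_\sigma(u))^{(p-1)(\theta-1)^{+}}$ that simultaneously restores uniform coercivity on the left and — thanks to the decay rate $\gamma_2$ of $h$ being balanced against $\theta$ in \eqref{condent} — keeps the right-hand side controlled. The absorption step, the passage $\sigma\to\infty$, and the final iteration are then routine.
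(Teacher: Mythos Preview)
Your argument is correct and follows the same broad Stampacchia level-set scheme as the paper (apply Lemma~\ref{P-test}, estimate from below by coercivity, from above via \eqref{h2}, then iterate), but the implementation differs in a substantive way.

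The paper introduces the auxiliary function
\[
H(s)=\int_0^s (1+|t|)^{-\left(\theta-\frac{\gamma_2}{p-1}\right)}\,dt
\]
and takes $P(u)=G_k(H(T_\sigma(u)))(1+T_\sigma(u))^{\gamma_2}$. The point of this change of variable is twofold. First, the left-hand side becomes exactly $\alpha\int|\nabla G_k(H(T_\sigma(u)))|^p$ with \emph{no} residual weight in $u$, and the right-hand side involves $G_k(H)$ to the \emph{first} power; a single H\"older with exponents $\tfrac{p^*}{p^*-1}$ and $p^*$ then closes the estimate, with no absorption step needed. Second, and more importantly, the resulting recursive inequality
\[
(l-k)^{p^*}|\{H(u)>l\}|\le C\,|\{H(u)>k\}|^{\beta},\qquad \beta=\frac{m(p^*-1)-p^*}{m(p-1)}>1,
\]
has a constant $C$ independent of $k$, so the classical Stampacchia lemma applies verbatim; boundedness of $H(u)$ then gives boundedness of $u$ because $H(s)\to\infty$ precisely when \eqref{condent} holds.

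Your route instead keeps $u$ as the variable and uses $((1+T_\sigma(u))^p-(1+k)^p)^+$ as the main factor. This forces a $p$-th power of $G_k$ on the right, whence the absorption step via $f\in L^{N/p}$ and the absolute continuity of the integral, and it produces the factor $(1+k)^{p^*}$ in the recursive inequality. Your remark that this $k$-dependence is harmless is right: along the dyadic sequence $k_j=k_0+d(1-2^{-j})$ one has $k_j\le k_0+d$, so one may bound $(1+k_j)^{p^*}\le(1+k_0+d)^{p^*}$ and then, since $|A_{k_0}|\to0$, choose first $k_0$ and then $d$ so that the standard iteration converges. This works, but it is worth stating explicitly rather than leaving as a parenthetical. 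In short, both proofs are valid; the paper's change of variable $H$ buys a cleaner, absorption-free argument with a $k$-independent constant, while your direct approach avoids the auxiliary function at the cost of an extra absorption step and a mildly nonstandard iteration.
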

\begin{proof}
	
	We define $H:\R\to \R$ as
	$$
	\displaystyle H(s):=\int_0^s \frac{1}{(1+|t|)^{\theta-\frac{\gamma_2}{p-1}}} dt,
	$$
	noting that $H(s)\to \infty$ as $s\to \infty$ if and only if $\theta\leq 1+\frac{\gamma_2}{p-1}$ (i.e. \eqref{condent}). Hence, the proof of the lemma will be concluded once that it is shown that $H(u)$ is bounded. Let us consider
	$$
	P(u)=G_k(H(T_\sigma(u)))(1+T_\sigma(u))^{\gamma_2} \qquad\forall \sigma>k>H(s_2),
	$$
where $G_k$ is defined by \eqref{defGk}, and let observe that, since $u$ is an entropy solution and by definitions of $H$ and $G_k$, it belongs to $\sob\cap L^\infty(\Omega)$. Moreover one has that $\supp (P(u))= A_{k,\sigma}$ where
	$$
	A_{k,\sigma}:=\{x\in\Omega : H(T_{\sigma}(u(x)))>k\}.
	$$
	The previous choice of $P$ allows to apply Lemma \ref{P-test}, obtaining \eqref{P-formu}. As regards the left-hand side of \eqref{P-formu} we have, using the positivity of the principal operator and the definition of $H$, that
	\begin{equation}\label{stamp2}
		\begin{aligned}
			\int_{\Omega} a(x,u,\nabla u)\cdot \nabla P(u) &\stackrel{\eqref{cara1}}\geq \int_{A_{k,\sigma}} b(u)(1+u)^{\gamma_2}H'(u)|\nabla T_\sigma(u)|^p  \\
			&\stackrel{\eqref{b}}\geq \alpha \int_{A_{k,\sigma}} \frac{|\nabla T_\sigma(u)|^p}{(1+u)^{p\left(\theta-\frac{\gamma_2}{p-1}\right)}}  \\
			& =  \alpha\int_{A_{k,\sigma}} |\nabla G_k(H(T_\sigma(u)))|^p  \\
			& \geq  \frac{\alpha}{\mathcal S^p}\left(\int_{A_{k,\sigma}} G_k(H(T_\sigma(u)))^{p^*}\right)^{\frac{p}{p^*}},
		\end{aligned}
	\end{equation}
	where the last step follows by the Sobolev inequality. Now we focus on the right-hand side of \eqref{P-formu}; since $\sigma>k>H(s_2)$, we use \eqref{h2} obtaining
	\begin{equation}
		\begin{aligned}
			\label{stamp3}
			\int_{\Omega} h(u)f P(u) & \leq  c_2 \int_{\Omega} \frac{fG_k(H(T_\sigma(u)))(1+u)^{\gamma_2}}{u^{\gamma_2}}  \\
			&\stackrel{\eqref{as3}}\leq  c_2\overline c\int_{A_{k,\sigma}} fG_k(H(T_\sigma(u)))  \\
			&\leq  c_2\overline c \left(\int_{A_{k,\sigma}} f^{\frac{p^*}{p^*-1}}\right)^{\frac{p^*-1}{p^*}}\left(\int_{A_{k,\sigma}} G_k(H(T_\sigma(u)))^{p^*}\right)^{\frac{1}{p^*}},
		\end{aligned}
	\end{equation}
	where in the last inequality we applied the H\"older inequality. Gathering \eqref{stamp2} and \eqref{stamp3} in \eqref{P-formu}, we deduce that there exists $C>0$ such that
	\begin{eqnarray}
		\label{stamp4}
		\int_{A_{k,\sigma}} G_k(H(T_\sigma(u)))^{p^*} \leq C\left(\int_{A_{k,\sigma}} f^{\frac{p^*}{p^*-1}}\right)^{\frac{p^*-1}{p-1}} \stackrel{A_{k,\sigma} \subset A_k}\leq C\left(\int_{A_{k}} f^{\frac{p^*}{p^*-1}}\right)^{\frac{p^*-1}{p-1}}
	\end{eqnarray}
	where
	$$
	A_k:=\{x\in \Omega : H(u(x))>k \}.
	$$
	Now, by continuity of $G_k$ and $H$, we have $G_k(H(T_\sigma(u)))\chi_{A_{k,\sigma}}\to G_k(H(u))\chi_{A_k}$ almost everywhere in $\Omega$ as $\sigma\to \infty$. Hence, applying the Fatou Lemma to the left-hand side of \eqref{stamp4}, we obtain, for $l>k>H(s_2)$, that
	\begin{equation}
	\begin{aligned}
		\label{stamp5}
		(l-k)^{p^*}|A_l| & \leq  \int_{A_l} G_k(H(u))^{p^*} \leq \int_{A_k} G_k(H(u))^{p^*}  \\
		& \leq  C \|f\|_{L^m(\Omega)}^{\frac{p^*}{p-1}} |A_k|^{\frac{m(p^*-1)-p^*}{m(p-1)}},
	\end{aligned}
	\end{equation}
	in which we also applied the H\"older inequality to the right-hand side of \eqref{stamp4} with exponents $\frac{m(p^*-1)}{p^*}$ and $\frac{m(p^*-1)}{m(p^*-1)-p^*}$; let also note that, since $p<N$ and $m>\frac{N}{p}$, one has $\frac{m(p^*-1)}{p^*}>1$ and $\frac{m(p^*-1)-p^*}{m(p-1)}>1$. Finally, applying Lemma 4.1 of \cite{St}, it follows from \eqref{stamp5} that $H(u)\in L^{\infty}(\Omega)$, which implies the result.
\end{proof}

\section{Uniqueness of entropy solutions}
\label{s5}

In order to show the uniqueness result, we first show that an entropy solution satisfies a suitable truncated weak formulation (see also Section \ref{s7}).
\begin{lemma}
\label{enttoren}
Let $a$ satisfy \eqref{cara1} and \eqref{cara2}. Let $h$ satisfy \eqref{h2} and let $f\in L^1(\Omega)$ be nonnegative. Let $u$ be an entropy solution to \eqref{pbmain} then it holds
\begin{equation}
\label{quren}
\int_{\Omega} a(x,u,\nabla u) \cdot \nabla\varphi S(u) + \int_{\Omega} a(x,u,\nabla u) \cdot \nabla u S'(u)\varphi = \int_{\Omega} h(u)fS(u)\varphi,
\end{equation}
for every $S\in W^{1,\infty}(\R)$ with compact support and for every $\varphi\in \sob\cap L^{\infty}(\Omega)$.
\end{lemma}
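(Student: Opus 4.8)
The plan is to derive \eqref{quren} directly from the entropy inequality \eqref{ent}, by inserting into it a suitable truncation of $u$ shifted by the target function, and then to upgrade the resulting one-sided inequality to an equality by a sign trick.

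Fix $S\in W^{1,\infty}(\R)$ with $\supp(S)\subseteq[-L,L]$ and $\varphi\in\sob\cap\linf$, and set $w:=S(u)\varphi$. Since $S$ is Lipschitz and vanishes outside $[-L,L]$, one has $S(u)=S(T_L(u))\in W^{1,p}(\Omega)\cap\linf$, hence $w\in\sob\cap\linf$ with
\[
\nabla w = S'(u)\,\varphi\,\nabla u + S(u)\,\nabla\varphi ,
\]
and moreover $w\equiv 0$ on $\{u>L\}$. Every product of $a(x,u,\nabla u)$ against a gradient appearing below is well-defined and finite, because $S'(u)$ and $S(u)$ vanish where $u>L$, so such products only involve $a(x,T_L(u),\nabla T_L(u))$, which lies in $L^{p'}(\Omega)^N$ by \eqref{ent0}. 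In particular both terms on the left-hand side of \eqref{quren} are finite, and proving \eqref{quren} reduces to showing
\[
\int_\Omega a(x,u,\nabla u)\cdot\nabla w = \int_\Omega h(u)f\,w .
\]

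For the inequality ``$\le$'' I would test \eqref{ent} with $T_l(u)-w$, for $l>L$ and a fixed $k\ge\|w\|_{\linf}$. Since $w\equiv 0$ on $\{u>l\}$, one checks that $u-(T_l(u)-w)$ equals $w$ on $\{u\le l\}$ and $G_l(u)$ on $\{u>l\}$, so that
\[
\nabla T_k\big(u-(T_l(u)-w)\big)=\nabla w+\nabla u\,\chi_{\{l<u<l+k\}} .
\]
Substituting into \eqref{ent}, the left-hand side becomes $\int_\Omega a(x,u,\nabla u)\cdot\nabla w+\int_{\{l<u<l+k\}}a(x,u,\nabla u)\cdot\nabla u$, and the last integral is nonnegative by \eqref{cara1}; the right-hand side becomes $\int_\Omega h(u)f\,w+\int_{\{u>l\}}h(u)f\,T_k(G_l(u))$, and this last term is bounded by $k\big(\sup_{s\in[s_2,\infty)}h(s)\big)\int_{\{u>l\}}f$, which tends to $0$ as $l\to\infty$ since $\sup_{s\in[s_2,\infty)}h(s)<\infty$ by \eqref{h2}, $f\in\luno$, and $|\{u>l\}|\to 0$ (as $u$ is finite a.e.). The integrability of $h(u)f\,w$ needed here is automatic: by \eqref{ent0}, applied with the chosen test function, $h(u)f\,T_k\big(u-(T_l(u)-w)\big)\in\luno$, and this function coincides with $h(u)f\,w$ on $\{u\le l\}\supseteq\supp(w)$. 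Letting $l\to\infty$ yields $\int_\Omega a(x,u,\nabla u)\cdot\nabla w\le\int_\Omega h(u)f\,w$.

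The reverse inequality follows in exactly the same manner using $T_l(u)+w$ in place of $T_l(u)-w$ (equivalently, replacing $w$ by $-w$ throughout), which gives $\int_\Omega a(x,u,\nabla u)\cdot\nabla w\ge\int_\Omega h(u)f\,w$. Combining the two estimates and expanding $\nabla w$ gives \eqref{quren}. The only genuinely delicate point is the vanishing of the tail term $\int_{\{u>l\}}h(u)f\,T_k(G_l(u))$ as $l\to\infty$ — this is precisely where the decay assumption \eqref{h2} (keeping $h$ bounded far from the origin) and $f\in\luno$ enter — together with the routine verifications that $T_l(u)\pm w\in\sob\cap\linf$ and that all products involving $a(x,u,\nabla u)$ are meaningful, which are handled by the compact support of $S$ confining the relevant quantities to $\{u\le L\}$, where the truncations of $u$ have finite energy.
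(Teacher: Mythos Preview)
Your argument is correct and follows essentially the same route as the paper's proof: both insert $T_l(u)-S(u)\varphi$ (your $T_l(u)-w$) as the test function in the entropy inequality with $k$ larger than $\|S(u)\varphi\|_{\linf}$, drop the nonnegative term $\int_{\{l<u<l+k\}}a(x,u,\nabla u)\cdot\nabla u$ coming from \eqref{cara1}, show the tail $\int_{\{u>l\}}h(u)fT_k(G_l(u))$ vanishes as $l\to\infty$ via \eqref{h2} and dominated convergence, and then flip the sign of $\varphi$ (equivalently of $w$) to obtain the reverse inequality. Your write-up is in fact slightly more streamlined in one respect: by observing explicitly that $w$ and $\nabla w$ vanish on $\{u>L\}$, you see at once that $\int_{\{u\le l\}}a\cdot\nabla w=\int_\Omega a\cdot\nabla w$ and $\int_{\{u\le l\}}h(u)fw=\int_\Omega h(u)fw$ for every $l>L$, so no limiting procedure (Fatou, monotone convergence) is needed on these terms, whereas the paper phrases the passage to the limit on the left-hand side through Fatou's lemma.
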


\begin{proof}
Without loss of generality we suppose that $\supp(S)=[-M,M]$ for some $M>0$ so that $S(u)=S(T_M(u))$. Moreover let us underline that, since $T_M(u)\in \sob$ and $S\in W^{1,\infty}(\R)$, then $S(T_M(u))\in W^{1,p}(\Omega) \cap \linf$. It follows that $S(u)\varphi=S(T_M(u))\varphi\in \sob \cap \linf$ for every $\varphi\in \sob\cap L^{\infty}(\Omega)$. Hence we can choose $T_l(u)-S(u)\varphi$ as test function in \eqref{ent} with $k=\|S(u)\varphi\|_{\linf}+1$ and $l>\max(M,1)$ obtaining
\begin{equation}
\label{rin1}
\begin{aligned}
\int_{\Omega} a(x,u,\nabla u)\cdot\nabla T_k(u-T_l(u)+S(u)\varphi) \le \int_{\Omega}h(u)fT_k(u-T_l(u)+S(u)\varphi).
\end{aligned}
\end{equation}
Recalling that $\nabla(u-T_l(u))=\nabla u\chi_{\{u>l\}}$ and that $S(u)\varphi=0$ in $\{u>M\}$, we deduce
\begin{equation}
\label{rin2}
\begin{aligned}
&\int_{\Omega} a(x,u,\nabla u)\cdot\nabla T_k(u-T_l(u)+S(u)\varphi)
\\
&\stackrel{\eqref{cara1}}\geq  \int_{\{|u-T_l(u)+S(u)\varphi|\leq k\}}\!\!\!\!\!\!\!\!\!\!\!\!\!\! a(x,T_M(u),\nabla T_M(u))\cdot \nabla (S(T_M(u))\varphi).
\end{aligned}
\end{equation}
Now, using again that $T_M(u)\in\sob$, we have $|a(x,T_M(u),\nabla T_M(u))|\stackrel{\eqref{cara2}}\leq \beta(\ell(x) + T_M(u)^{p-1}+|\nabla T_M(u)|^{p-1}) \in L^{\frac{p}{p-1}}(\Omega)$ and $|\nabla (S(T_M(u))\varphi)|\in L^p(\Omega)$. Hence $a(x,T_M(u),\nabla T_M(u))\cdot \nabla (S(T_M(u))\varphi)\in L^1(\Omega)$. Moreover, since $\chi_{\{|u-T_l(u)+S(u)\varphi|\leq k\}}\to 1$ almost everywhere in $\Omega$ as $l\to \infty$ since $k=\|S(u)\varphi\|_{\linf}+1$ then an application of the Lebesgue Theorem gives that
\begin{equation}
\label{rin3}
\begin{aligned}
&\lim_{l\to \infty} \int_{\{|u-T_l(u)+S(u)\varphi|\leq k\}}\!\!\!\!\!\!\!\!\!\!\!\!\!\! a(x,T_M(u),\nabla T_M(u))\cdot \nabla (S(T_M(u))\varphi)
\\
&= \int_{\Omega} a(x,T_M(u),\nabla T_M(u))\cdot \nabla (S(T_M(u))\varphi).
\end{aligned}
\end{equation}
It follows from \eqref{rin1}, \eqref{rin2} and \eqref{rin3} that
\begin{equation}
\label{rin1bis}
\int_{\Omega} a(x,u,\nabla u)\cdot \nabla (S(u)\varphi) \leq \limsup_{l\to \infty}\int_{\Omega}h(u)fT_k(u-T_l(u)+S(u)\varphi).
\end{equation}
For the right-hand side of \eqref{rin1bis}, we have (recall that $l>M$)
\begin{eqnarray*}
\int_{\Omega}h(u)fT_k(u-T_l(u)+S(u)\varphi) &=& \int_{\{u\leq M\}} h(u)fS(u)\varphi  \nonumber \\
&& + \int_{\{u>l\}}h(u)fT_k(u-T_l(u)).
\end{eqnarray*}
Since $|h(u)fT_k(u-T_l(u))|\stackrel{\eqref{h2}}\leq k \displaystyle \left(\sup_{s\in (1,\infty)} h(s)\right) f\in L^1(\Omega)$ and since  $h(u)fT_k(u-T_l(u))\chi_{\{u>l\}}\to 0$ as $l\to \infty$, one can apply the Lebesgue Theorem, obtaining
\begin{equation}
\label{rin5}
\lim_{l \to \infty} \int_{\Omega}h(u)fT_k(u-T_l(u)+S(u)\varphi) = \int_{\{u\leq M\}} h(u)fS(u)\varphi = \int_{\Omega} h(u)fS(u)\varphi.
\end{equation}
Finally, combining \eqref{rin1bis} and \eqref{rin5}, we have shown that
$$
\int_{\Omega} a(x,u,\nabla u) \cdot \nabla\varphi S(u) + \int_{\Omega} a(x,u,\nabla u) \cdot \nabla u S'(u)\varphi \leq \int_{\Omega} h(u)fS(u)\varphi,
$$
for every $\varphi\in \sob\cap\linf$. The result follows repeating the same argument with $-\varphi$.
\end{proof}

The previous lemma allows us to prove the following result which is fundamental to show the uniqueness theorem.

\begin{lemma}\label{tecuni}
	Under the assumptions of Lemma \ref{enttoren}, let $a$ satisfy \eqref{cara3} and
	\begin{equation}\label{lipa}
		|a(x,s_1,\xi)- a(x,s_2,\xi)| \le C(\ell(x)+|\xi|^{p-1})|s_1-s_2|, \quad \forall s_1,s_2 \in \mathbb{R},
	\end{equation}
	for $C>0$, for every $\xi \in \mathbb{R}^N$ and for almost every $x$ in $\Omega$. Let $h$ be a decreasing function and let $f>0$ almost everywhere in $\Omega$. Let $u_1, u_2$ entropy solutions to \eqref{pbmain} such that:
	\begin{itemize}
		\item[$(i)$] $\| |a(x,T_{2n}(u_i),\nabla T_{2n}(u_i))| \|^{\frac{p}{p-1}}_{L^{\frac{p}{p-1}}(\Omega)} \le Cn$, for $i=1,2$ and $n\geq n_0$ for some $n_0>0$ where  $C$ does not depend on $n$;
		\item[$(ii)$] $\frac{1}{n}\||\nabla u_i| \chi_{\{n<u_i<2n\}}|\|^p_{L^p(\Omega)}$ converges to zero as $n\to\infty$ for $i=1,2$.
	\end{itemize}
	Then $u_1=u_2$ almost everywhere in $\Omega$.
\end{lemma}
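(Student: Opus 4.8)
The plan is to prove that $u_1\le u_2$ almost everywhere in $\Omega$; exchanging the roles of $u_1$ and $u_2$ then yields the reverse inequality, so that $u_1=u_2$. The whole argument exploits the \emph{right-hand side} of the equation: since $h$ is decreasing and $f>0$ almost everywhere, on $\{u_1>u_2\}$ one has $h(u_1)\le h(u_2)$, and this fact — sharpened by the strict monotonicity of $h$ — is what will eventually be contradicted.

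First I would work with the renormalized identity \eqref{quren} of Lemma \ref{enttoren}, which holds for both $u_1$ and $u_2$. I test it, for $u_1$, with renormalization $S=V_l$ (recall \eqref{Vdelta}: compact support, $V_l'\le0$ on $[0,\infty)$) and with a $T_j$‑truncated version of the test function $T_k\big((u_1-u_2)^+\big)$, symmetrically for $u_2$, and subtract. Using $a=\tilde a(x,\cdot)\,b$ and adding and subtracting $\tilde a(x,\nabla u_2)\,b(u_1)V_l(u_1)$, the gradient contributions reorganize, up to quantities that vanish as the auxiliary level $j\to\infty$, into the \emph{monotone} part
$$
\int_\Omega V_l(u_1)\,b(u_1)\,\big(\tilde a(x,\nabla u_1)-\tilde a(x,\nabla u_2)\big)\cdot\nabla T_k\big((u_1-u_2)^+\big)\ \ge\ 0
$$
(nonnegative by \eqref{cara3}, \eqref{caraunique} and \eqref{b}), plus $(a)$ the two terms carrying $V_l'(u_i)$, supported on $\{l<u_i<2l\}$, and $(b)$ an error $\int_\Omega \tilde a(x,\nabla u_2)\big(b(u_1)V_l(u_1)-b(u_2)V_l(u_2)\big)\cdot\nabla T_k\big((u_1-u_2)^+\big)$ produced by the dependence of $a$ on $u$. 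On the support of $\nabla T_k((u_1-u_2)^+)$ one has $u_1>u_2$, whence ($h$ and $V_l$ being decreasing) $h(u_1)V_l(u_1)\le h(u_2)V_l(u_2)$, so the difference of the two right-hand sides is $\le0$.

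Then I pass to the limit. The terms hidden in ``$j\to\infty$'' vanish by the Lebesgue theorem (as in the proof of Theorem \ref{teo_ent}, using $\{u_i=0\}\subset\{f=0\}$); the terms $(a)$ are bounded, after Young's inequality, by $\tfrac{Ck}{l}\int_{\{l<u_i<2l\}}\big(|a(x,T_{2l}(u_i),\nabla T_{2l}(u_i))|^{p/(p-1)}+|\nabla u_i|^p\big)$, and it is precisely assumptions $(i)$ and $(ii)$ that drive this to $0$ as $l\to\infty$; the term $(b)$ is controlled, via the Lipschitz estimate \eqref{lipa} (equivalently, $b$ Lipschitz) together with \eqref{cara3}, by a quantity supported on $\{|u_1-u_2|<k\}$ and proportional to $k$. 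Dividing the identity by $k$ and letting first $l\to\infty$ and then $k\to0^+$, Fatou's lemma keeps the rescaled monotone part nonnegative, $(b)/k\to0$ because $\nabla u_1=\nabla u_2$ almost everywhere on $\{u_1=u_2\}$ (Lemma \ref{dalmaso2}), and $\tfrac1k$ times the right-hand side converges to $-\int_{\{u_1>u_2\}}|h(u_1)-h(u_2)|\,f$. Therefore $0\le-\int_{\{u_1>u_2\}}|h(u_1)-h(u_2)|\,f\le0$; since $f>0$ a.e.\ and $h$ is strictly decreasing this forces $|\{u_1>u_2\}|=0$, i.e.\ $u_1\le u_2$ a.e. Exchanging $u_1$ and $u_2$ gives $u_1\ge u_2$, whence $u_1=u_2$.

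The main obstacle is this last step: keeping the error generated by the $x$‑ and $u$‑dependence of $a$ (term $(b)$) and the renormalization terms at large values (term $(a)$) under simultaneous control while the cutoffs $V_l$ and $T_k$ are removed, in the entropy framework where the $u_i$ need not belong to $\sob$. This is exactly why the structural assumption $a=\tilde a(x,\cdot)\,b$ with $b$ Lipschitz, and the quantitative bounds $(i)$ and $(ii)$ on the truncations of the two solutions, cannot be dispensed with.
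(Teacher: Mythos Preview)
Your strategy coincides with the paper's: use the renormalized identity \eqref{quren} for each solution, subtract, isolate the monotone part via \eqref{cara3}, kill the $s$--dependence of $a$ with \eqref{lipa}, and drive the large--value tails to zero through hypotheses $(i)$ and $(ii)$, so that the sign of $f(h(u_2)-h(u_1))$ forces $u_1=u_2$. Two points, however, need repair.

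First, you invoke the factorization $a(x,s,\xi)=\tilde a(x,\xi)\,b(s)$, but Lemma~\ref{tecuni} does \emph{not} assume it: only \eqref{cara1}--\eqref{cara3} and \eqref{lipa} are available. Without the factorization, your split leaves an additional piece $a(x,u_2,\nabla u_2)\big(V_l(u_1)-V_l(u_2)\big)\cdot\nabla T_k((u_1-u_2)^+)$ which is not obviously $o(1)$ in the order of limits you propose. The paper avoids this altogether by a \emph{cross--truncation}: it tests \eqref{quren} for $u_1$ with $S=V_n$ and $\varphi=\tfrac{1}{k}T_k(u_1-u_2)\,V_n(u_2)$, and the equation for $u_2$ with $S=V_n$ and $\varphi=\tfrac{1}{k}T_k(u_1-u_2)\,V_n(u_1)$. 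After subtraction the principal part carries the \emph{symmetric} factor $V_n(u_1)V_n(u_2)$, so the split $a(x,u_1,\nabla u_1)-a(x,u_2,\nabla u_2)=(a(x,u_1,\nabla u_1)-a(x,u_1,\nabla u_2))+(a(x,u_1,\nabla u_2)-a(x,u_2,\nabla u_2))$ produces no $V_n$--difference; the four remaining tail terms are exactly those handled by $(i)$ and $(ii)$. This also makes $\varphi\in W^{1,p}_0(\Omega)\cap L^\infty(\Omega)$ automatic, dispensing with your auxiliary $T_j$--truncation. The paper then takes $k\to0$ first and $n\to\infty$ afterwards (both via Fatou), which keeps every integrand truncated at level $2n$ when the $k$--limit is performed; your order ``$l\to\infty$ first, $k\to0^+$ second'' leaves the $(b)$ term without an $L^1$ dominant.

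Second, Young's inequality is too crude for your tail terms $(a)$: it yields $\tfrac{1}{l}\int_{\{l<u_i<2l\}}|a|^{p/(p-1)}$, which by $(i)$ is merely bounded, not $o(1)$. The paper uses H\"older instead, getting $\tfrac{1}{l}\big(\int_\Omega|a(x,T_{2l}(u_i),\nabla T_{2l}(u_i))|^{p/(p-1)}\big)^{(p-1)/p}\big(\int_{\{l<u_j<2l\}}|\nabla u_j|^p\big)^{1/p}$, whose vanishing follows precisely from combining $(i)$ with $(ii)$.
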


\begin{proof}	
	Let $u_1$ and $u_2$ be two entropy solutions to \eqref{pbmain}. Then it follows from Lemma \ref{enttoren} that $u_1$ and $u_2$ satisfy \eqref{quren}. Let us now fix $S=V_n$ (see \eqref{Vdelta}) in both formulations \eqref{quren} and take $\varphi = \frac{T_k(u_1-u_2)}{k}V_n(u_2)$ and $\varphi = \frac{T_k(u_1-u_2)}{k}V_n(u_1)$ in the formulation \eqref{quren} solved, respectively, by $u_1$ and $u_2$. Let us remark that the previous functions are admissible test functions; indeed, for instance,  $\nabla (T_k(u_1-u_2)V_n(u_2))$ can be  different from zero only in the set $\{|u_1-u_2|\le k\}\cap \{u_2\le 2n\}$, then $\varphi$ belongs to $\sob\cap L^\infty(\Omega)$ since, by Definition \ref{defent}, $T_k(u_i)\in \sob$ for $i=1,2$ and for every $k>0$.
	\\
Subtracting the two formulations one gets
	\begin{equation}\label{uni1}
		\begin{aligned}
			&0= \frac{1}{k}\int_\Omega f (h(u_2)-h(u_1)) T_k(u_1-u_2)V_n(u_1)V_n(u_2)
			\\
			&+\frac{1}{k}\int_\Omega \left(a(x,u_1,\nabla u_1)-a(x,u_2,\nabla u_2)\right)\cdot \nabla T_k(u_1-u_2) V_n(u_1)V_n(u_2)
			\\
			&- \frac{1}{nk}\int_{\{n<u_2<2n\}} a(x,u_1,\nabla u_1) \cdot \nabla u_2 T_k(u_1-u_2)V_n(u_1)
			\\
			&+ \frac{1}{nk}\int_{\{n<u_1<2n\}} a(x,u_2,\nabla u_2) \cdot \nabla u_1 T_k(u_1-u_2)V_n(u_2)
			\\
			&- \frac{1}{nk}\int_{\{n<u_1<2n\}} a(x,u_1,\nabla u_1) \cdot \nabla u_1 T_k(u_1-u_2)V_n(u_2)
			\\
			&+ \frac{1}{nk}\int_{\{n<u_2<2n\}} a(x,u_2,\nabla u_2) \cdot \nabla u_2 T_k(u_1-u_2)V_n(u_1)
			\\
			& =: (A) + (B) + (C) + (D) + (E) + (F).
		\end{aligned}
	\end{equation}
Let us now take $k\to 0$ and $n\to \infty$ in the previous. We start from $(C)$; applying the H\"older inequality we have
$$
\begin{aligned}
	(C)\le \frac{1}{n} \left(\int_{\Omega} |a(x,T_{2n}(u_1),\nabla T_{2n}(u_1))|^{\frac{p}{p-1}}\right)^{\frac{p-1}{p}} \left(\int_{\{n<u_2<2n\}}|\nabla u_2|^p\right)^\frac{1}{p}
\end{aligned}
$$
and using assumptions $(i)$ and $(ii)$, the term on the right-hand side of the previous goes to zero as $n\to \infty$. This shows that both $(C)$ and $(D)$, $(E)$ and $(F)$ (with an analogous reasoning) go to zero uniformly in $k$ and as $n \to \infty$. \\
Now let us consider $(B)$. One has
$$
\begin{aligned}
	(B)= &\frac{1}{k}\int_\Omega (a(x,u_1,\nabla u_1) - a(x,u_1,\nabla u_2))\cdot \nabla T_k(u_1-u_2)V_n(u_1) V_n(u_2)
	\\
	& + \frac{1}{k}\int_\Omega (a(x,u_1,\nabla u_2) - a(x,u_2,\nabla u_2))\cdot \nabla T_k(u_1-u_2)V_n(u_1) V_n(u_2)=: (B_1) + (B_2).
\end{aligned}
$$
We can get rid of $(B_1)$ which is nonnegative by \eqref{cara3}. For $(B_2)$ one can use \eqref{lipa}, deducing that
$$
\begin{aligned}
(B_2)\le \int_{\{|u_1-u_2|<k\}} C(\ell(x) + |\nabla u_2|^{p-1})|\nabla (u_1-u_2)| V_n(u_1)V_n(u_2).
\end{aligned}
$$
Then one can simply pass the previous to the limit as $k\to 0$ in order to get that $(B_2)$ tends to zero since $\nabla T_{2n}(u_1) = \nabla T_{2n}(u_2)$ almost everywhere in $\{u_1=u_2\}$. \\
Finally, gathering all these calculations in \eqref{uni1} and using that $h$ is decreasing, we deduce
$$
0\le \frac{1}{k}\int_\Omega f (h(u_2)-h(u_1)) T_k(u_1-u_2)V_n(u_1)V_n(u_2) \le \omega_{k,n},
$$
where $\omega_{k,n}$ is a quantity which goes to zero as $k\to 0$ and $n\to\infty$. Therefore two applications of the Fatou Lemma, first as $k\to0$ and then as $n\to\infty$, gives that
$$\int_\Omega f |(h(u_2)-h(u_1))|=0.$$
Since $f>0$ almost everywhere in $\Omega$ then the previous yields to $u_1=u_2$ almost everywhere in $\Omega$.
\end{proof}

We are ready to prove Theorem \ref{teo_uniqueness}, whose proof is an application of Lemma \ref{tecuni}.

\begin{proof}[Proof of Theorem \ref{teo_uniqueness}]
Recall that $a(x,s,\xi)=\tilde a(x,\xi)b(s)$. We note that, under the assumption \eqref{caraunique} on $\tilde a$ and since $b$ is a bounded Lipschitz function, $a$ satisfies \eqref{cara1}, \eqref{cara2}, \eqref{cara3} and \eqref{lipa}. Thus, in order to have uniqueness of entropy solutions, we need to show that any entropy solution satisfies assumptions $(i)$ and $(ii)$ of Lemma \ref{tecuni}. \\
Let $u$ be an entropy solution then, by taking $\varphi = 0$ and $k=2n$ (with $n>1$) in \eqref{ent}, we have
\begin{equation}
\begin{aligned}\label{stimaunicita}
\int_\Omega b(T_{2n}(u))|\nabla T_{2n}(u)|^p &\stackrel{\eqref{caraunique}}\le  \int_{\Omega} b(T_{2n}(u)) \tilde a(x,\nabla T_{2n}(u))\cdot \nabla T_{2n}(u)  \\
& \leq  \int_{\{u\le 1\}} h(u)fu + 2n\int_{\{u>1\}} h(u)f \leq  Cn,
\end{aligned}
\end{equation}
where in the last inequality we use that, by \eqref{ent0}, $h(u)fu\chi_{\{u\le 1\}} \in L^1(\Omega)$ and, by \eqref{h2}, $h(u)f\chi_{\{u>1\}}\in L^1(\Omega)$.
It follows that
\begin{equation*}
\begin{aligned}
\int_\Omega |a(x, T_{2n}(u),\nabla T_{2n}(u))|^{\frac{p}{p-1}} &\stackrel{\eqref{caraunique}}\le \beta^{\frac{p}{p-1}}\int_{\Omega} b(T_{2n}(u))^{\frac{p}{p-1}}\left(\ell(x)+|\nabla T_{2n}(u)|^{p-1}\right)^{\frac{p}{p-1}}  \\
& \leq (2\beta)^{\frac{p}{p-1}} \int_\Omega b(T_{2n}(u))^{\frac{p}{p-1}}\left(\ell(x)^\frac{p}{p-1}+|\nabla T_{2n}(u)|^p\right)
\\
&\le Cn,
\end{aligned}
\end{equation*}	
where in the last inequality we use \eqref{stimaunicita} and that by assumptions $\ell\in L^{\frac{p}{p-1}}(\Omega)$ and $b$ is bounded from above. Therefore $(i)$ of Lemma \ref{tecuni} is proved. \\
In order to show $(ii)$ we take $\varphi = T_n(u)$ and $k=n$ in \eqref{ent}, then using \eqref{caraunique} and \eqref{b}, one obtains that
  $$
  \begin{aligned}
  \alpha\int_{\{n<u<2n\}}\frac{|\nabla u|^p}{(1+u)^{\theta(p-1)}} \le \int_{\{u>n\}}h(u)f T_n(u-T_n(u)).	
  \end{aligned}
  $$
Therefore from the previous and for $n$ large enough we deduce
    $$
  \begin{aligned}
  	\frac{\alpha}{(1+2n)^{\theta(p-1)}}\int_{\{n<u<2n\}}|\nabla u|^p \stackrel{\eqref{h2}}\le c_2n\int_{\{u>n\}}fu^{-\gamma_2}, 	
  \end{aligned}
  $$
  which implies
\begin{equation}\label{unique}
  \begin{aligned}
  	\frac{1}{n}\int_{\{n<u<2n\}}|\nabla u|^p \stackrel{\eqref{as3}}\le C\int_{\{u>n\}}fu^{\theta(p-1)-\gamma_2}. 	
  \end{aligned}
\end{equation}
If $\theta(p-1)-\gamma_2\le 0$ then it is sufficient requiring $f\in L^1(\Omega)$ in order to deduce that the right-hand side of \eqref{unique} tends to zero as $n\to\infty$. Otherwise, if $\theta(p-1)-\gamma_2> 0$ and $f\in L^m(\Omega)$ (with $m>1$), by an application of the H\"older inequality in \eqref{unique} we obtain
  \begin{equation}\label{unique2}
  	\begin{aligned}
  		\frac{1}{n}\int_{\{n<u<2n\}}|\nabla u|^p \le C\left(\int_{\{u>n\}}f^m\right)^{\frac{1}{m}} \left(\int_\Omega u^{\frac{m(\theta(p-1)-\gamma_2)}{m-1}}\right)^{\frac{m-1}{m}}, 	
  	\end{aligned}
  	\end{equation}
which gives also in this case that the right-hand side of \eqref{unique2} goes to zero as $n\to\infty$ since, by assumption, $u^{\frac{m(\theta(p-1)-\gamma_2)}{m-1}}\in L^1(\Omega)$. Hence the first two statements of the theorem follow.

\smallskip

Now we assume \eqref{condent} and \eqref{condmuni}. It follows from the first part of the proof and by \eqref{condmuni} that we just have to prove the case $\theta(p-1)-\gamma_2> 0$ in which
\begin{equation}
\label{appmcoroun}
m\geq \frac{N(p-1)}{(N-p)(\gamma_2-\theta(p-1))+N(p-1)}.
\end{equation}
Thus, if $\theta=1+\frac{\gamma_2}{p-1}$, we have $m\geq \frac{N}{p}$ and, thanks to Lemma \ref{crit^2}, $u\in L^q(\Omega)$ for every $q<\infty$. On the other hand, if $\theta<1+\frac{\gamma_2}{p-1}$, by Lemma \ref{lem_sum_m>1}, we have that $u^{\frac{Nm((p -1)(1-\theta)+\gamma_2)}{N-mp}} \in L^1(\Omega)$. Using \eqref{appmcoroun} we deduce that
$$
\frac{m(\theta(p-1)-\gamma_2)}{m-1} \leq \frac{Nm((p -1)(1-\theta)+\gamma_2)}{N-mp},
$$
hence $u^{\frac{m(\theta(p-1)-\gamma_2)}{m-1}} \in L^1(\Omega)$. Thus the result follows from the first part.
\end{proof}

\section{Distributional and renormalized solutions}
\label{s7}

In this section we prove, under a suitable restriction on $\theta$, that an entropy solution is actually a distributional solution. Indeed, as we have already seen above, $a(x,u,\nabla u)$ is not always locally integrable. This implies that the distributional notion of solution loses sense, but, as long as $u$ is such that $a(x,u,\nabla u)\in L^1_{\rm loc}(\Omega)$, one can show that an entropy solution to \eqref{pbmain} is also a distributional one (see Definition \ref{defdis} below). \\
Finally we introduce the notion of renormalized solution (see Definition \ref{renormalized} below) and we show the equivalence with the definition of entropy solution. 
\\

We first precise the meaning of distributional solution and then we state and prove the above mentioned result.

\begin{defin}
	\label{defdis}
	A nonnegative measurable function $u$ is a \textit{distributional solution} to problem \eqref{pbmain} if $T_k(u) \in W^{1,1}_0(\Omega)$ for every $k>0$ and if
	\begin{gather}
		a(x,u,\nabla u) \in L^{1}_{\rm loc}(\Omega)^N,\quad h(u)f\in L^1_{\rm loc}(\Omega), \nonumber \\
		\int_{\Omega} a(x,u,\nabla u)\cdot\nabla \varphi = \int_{\Omega}h(u)f \varphi \quad \forall \varphi \in C^1_c(\Omega).  \label{disw}
	\end{gather}
\end{defin}

Thus we have:

\begin{lemma}
\label{entdisgeneral}
Let $a$ satisfy \eqref{cara1}, $h$ satisfy \eqref{h2} and let $f\in L^1(\Omega)$ be nonnegative. If $u$ is an entropy solution to \eqref{pbmain} such that $a(x,u,\nabla u) \in L^1_{\rm loc}(\Omega)^N$, then $u$ is also a distributional solution to \eqref{pbmain}.
\end{lemma}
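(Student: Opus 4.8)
The plan is to recover the distributional formulation \eqref{disw} by inserting, into the entropy inequality \eqref{ent}, test functions of the form $\psi_j:=T_j(u)-\varphi$ and letting $j\to\infty$; the outer truncation appearing in \eqref{ent} will absorb the (possibly unbounded) part $u-T_j(u)$, while on the set $\{u<j\}$ the test function reduces to $\varphi$ itself.

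First I would settle the integrability requirements of Definition \ref{defdis} that are not immediate: $T_k(u)\in W^{1,1}_0(\Omega)$ holds because $T_k(u)\in W^{1,p}_0(\Omega)$ and $\Omega$ is bounded, $a(x,u,\nabla u)\in L^1_{\rm loc}(\Omega)^N$ is the hypothesis, and for $h(u)f\in L^1_{\rm loc}(\Omega)$ I fix a compact $K\subset\Omega$, pick $\rho\in C^1_c(\Omega)$ with $0\le\rho\le 1$ and $\rho\equiv 1$ on $K$, and use $\varphi=-\rho$ in the membership condition \eqref{ent0}: this gives $h(u)fT_1(u+\rho)\in L^1(\Omega)$, and since $u\ge 0$ and $\rho\equiv 1$ on $K$ one has $T_1(u+\rho)\ge 1$ there, so $h(u)f\,\chi_K\le h(u)fT_1(u+\rho)\in L^1(\Omega)$. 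I also record that, by \eqref{h2}, $h$ is bounded on $[s_2,\infty)$ by some constant $C_2$.

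Now fix $\varphi\in C^1_c(\Omega)$, set $K:=\supp\varphi$, $M:=\|\varphi\|_{L^\infty(\Omega)}$, $m:=M+1$, and for $j>s_2$ insert $\psi_j=T_j(u)-\varphi\in W^{1,p}_0(\Omega)\cap L^\infty(\Omega)$ as test function in \eqref{ent} with truncation parameter $m$. On $\{u<j\}$ one has $u-\psi_j=\varphi$, so there the two integrands equal $a(x,u,\nabla u)\cdot\nabla\varphi$ and $h(u)f\varphi$ respectively; on $\{u\ge j\}$ one has $u-\psi_j=G_j(u)+\varphi$, whose $m$-truncation has gradient $\nabla u+\nabla\varphi$ and is supported in $\{j\le u<j+2m\}$. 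The part $\int a(x,u,\nabla u)\cdot\nabla u$ over this set is nonnegative by \eqref{cara1} and is dropped, while the part $\int a(x,u,\nabla u)\cdot\nabla\varphi$ over it is bounded by $\|\nabla\varphi\|_{L^\infty(\Omega)}\int_{\{u\ge j\}\cap K}|a(x,u,\nabla u)|$, which vanishes as $j\to\infty$ by absolute continuity of the integral, since $a(x,u,\nabla u)\chi_K\in L^1(\Omega)$ and $|\{u\ge j\}|\to 0$ (as $u$ is a.e.\ finite). On the right-hand side, on $\{u\ge j\}$ one has $u\ge s_2$, so the integrand is dominated by $C_2 m\,f$ and its integral tends to $0$, whereas on $\{u<j\}$ it converges to $\int_\Omega h(u)f\varphi$ by dominated convergence (using $h(u)f\varphi\in L^1(\Omega)$ from the previous step); likewise $\int_{\{u<j\}}a(x,u,\nabla u)\cdot\nabla\varphi\to\int_\Omega a(x,u,\nabla u)\cdot\nabla\varphi$. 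Collecting these limits yields $\int_\Omega a(x,u,\nabla u)\cdot\nabla\varphi\le\int_\Omega h(u)f\varphi$, and repeating the argument with $-\varphi$ in place of $\varphi$ gives the reverse inequality, hence \eqref{disw}.

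The main obstacle is controlling the ``annular'' terms living on the superlevel sets $\{u\ge j\}$ — namely $\int_{\{u\ge j\}\cap K}|a(x,u,\nabla u)|$ and $\int_{\{u\ge j\}}f$ — and guaranteeing they disappear in the limit; this is precisely where the hypothesis $a(x,u,\nabla u)\in L^1_{\rm loc}(\Omega)^N$, the almost everywhere finiteness of $u$, and the boundedness of $h$ at infinity coming from \eqref{h2} are used. The rest is routine dominated convergence together with the sign condition \eqref{cara1}.
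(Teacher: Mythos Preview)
Your argument is correct and follows essentially the same route as the paper: both insert $T_j(u)-\varphi$ (the paper writes $T_l(u)-\psi$) into the entropy inequality with outer truncation level $\|\varphi\|_{L^\infty}+1$, drop the nonnegative term $a\cdot\nabla u$ on the superlevel set via \eqref{cara1}, pass to the limit using the local integrability of $a(x,u,\nabla u)$ and the boundedness of $h$ at infinity, and then reverse the sign of $\varphi$. The only cosmetic difference is the order in which $h(u)f\in L^1_{\rm loc}(\Omega)$ is obtained: you extract it first from \eqref{ent0} with $\varphi=-\rho$, while the paper derives it at the end as a consequence of \eqref{disw}.
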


\begin{proof}
	First we note that, since $u$ is an entropy solution, it holds $T_k(u)\in \sob$ for every $k>0$. \\
	Now fix $\psi\in C_c^1(\Omega)$. Choosing $\varphi=T_l(u)-\psi\in \sob\cap \linf$ in \eqref{ent} with $l>1$ and $k=\|\psi\|_{\linf}+1$, we obtain
	\begin{equation}
		\label{d1}
		\int_{\Omega} a(x,u,\nabla u)\cdot\nabla T_k(u-T_l(u)+\psi) \le \int_{\Omega}h(u)fT_k(u-T_l(u)+\psi).
	\end{equation}
	We focus on the left-hand side of \eqref{d1}. We have
	\begin{eqnarray}
		\label{d6}
		\int_{\Omega} a(x,u,\nabla u)\cdot\nabla T_k(u-T_l(u)+\psi) & = & \int_{\{|u-T_l(u)+\psi|<k\}\cap \{u>l\}} a(x,u,\nabla u)\cdot \nabla u\nonumber \\
		&& +\int_{\{|u-T_l(u)+\psi|<k\}} a(x,u,\nabla u)\cdot \nabla \psi \nonumber \\
		&\stackrel{\eqref{cara1}}\geq & \int_{\{|u-T_l(u)+\psi|<k\}} a(x,u,\nabla u)\cdot \nabla \psi.
	\end{eqnarray}
	It follows from \eqref{d1} and \eqref{d6} that
	\begin{equation}
		\label{d7}
		\int_{\{|u-T_l(u)+\psi|<k\}} a(x,u,\nabla u)\cdot \nabla \psi \leq \int_{\Omega}h(u)fT_k(u-T_l(u)+\psi).
	\end{equation}
	Moreover, $|a(x,u,\nabla u)\cdot \nabla \psi\chi_{\{|u-T_l(u)+\psi|<k\}}|\leq \|\nabla \psi\|_{\linf} |a(x,u,\nabla u)|\in L^1(\supp \psi)$ and $\chi_{\{|u-T_l(u)+\psi|<k\}}\to 1$ a.e. in $\Omega$ (recall that $k=\|\psi\|_{\linf}+1$ and $u$ is almost everywhere finite on $\Omega$), hence, by the Lebesgue Theorem, we obtain
	\begin{equation}
		\label{d8}
		\lim_{l\to \infty} \int_{\{|u-T_l(u)+\psi|<k\}} a(x,u,\nabla u)\cdot \nabla\psi =\int_{\Omega} a(x,u,\nabla u)\cdot \nabla \psi.
	\end{equation}
	As regards the right-hand side of \eqref{d1}, we have
	\begin{eqnarray}
		\label{d2}
		\int_{\Omega}h(u)fT_k(u-T_l(u)+\psi) &=& \int_{\{u\leq l\}} h(u)f\psi^+ - \int_{\{u\leq l\}} h(u)f\psi^- \nonumber \\
		&&+ \int_{\{u> l\}} h(u)fT_k(u-T_l(u)+\psi).
	\end{eqnarray}
	Since $0\leq h(u)f\psi^{\pm}\chi_{\{u\leq l\}}$ is increasing in $l$, applying the Beppo Levi Theorem to the first two addends of the right-hand side of \eqref{d2}, we find
	\begin{equation}
		\label{d3}
		\lim_{l\to \infty} \int_{\{u\leq l\}} h(u)f\psi =\int_{\Omega}  h(u)f\psi.
	\end{equation}
	Moreover, $|h(u)fT_k(u-T_l(u)+\psi)\chi_{\{u>l\}}|\leq k\|h\|_{L^{\infty}((1,\infty))}f\in L^1(\Omega)$ and, since $u$ is almost everywhere finite on $\Omega$, $h(u)fT_k(u-T_l(u)+\psi)\chi_{\{u>l\}}\to 0$ a.e. in $\Omega$. It follows from Lebesgue Theorem that
	\begin{equation}
		\label{d4}
		\lim_{l\to \infty} \int_{\{u> l\}} h(u)fT_k(u-T_l(u)+\psi)=0.
	\end{equation}
	Combining \eqref{d3} and \eqref{d4} in \eqref{d2} we deduce
	\begin{equation}
		\label{d5}
		\lim_{l\to \infty} \int_{\Omega}h(u)fT_k(u-T_l(u)+\psi)= \int_{\Omega}  h(u)f\psi.
	\end{equation}
	Now letting $l$ tend to infinity in \eqref{d7}, thanks to \eqref{d8} and \eqref{d5}, we conclude that
	$$
	\int_{\Omega} a(x,u,\nabla u)\cdot \nabla \psi \leq \int_{\Omega}  h(u)f\psi \qquad \forall \psi \in C^1_c(\Omega).
	$$
	Repeating the same argument with $-\psi$ we find the inverse inequality, hence \eqref{disw} holds. \\
	Finally, using again that by assumption $a(x,u,\nabla u)\in L^1_{\rm loc}(\Omega)^N$, it follows from \eqref{disw} that $h(u)f \in L^1_{\rm loc}(\Omega)$.
\end{proof}

Note that, thanks to the regularity results of Section \ref{sec_reg}, we can give a summability range for $f$ in which an entropy solution $u$ satisfies $a(x,u,\nabla u)\in L^1_{\rm loc}(\Omega)^N$. Therefore, as an application of Lemma \ref{entdisgeneral}, $u$ is a distributional solution to \eqref{pbmain}. This is contained in the next lemma.

\begin{lemma}
		\label{entdis}
Let $a$ satisfy \eqref{cara1}, \eqref{b} and \eqref{condent} and let $h$ satisfy \eqref{h2}. Let $f\in L^m(\Omega)$ with
\begin{equation}
\label{conddis}
\begin{cases}
m\geq \max\left(\frac{N(p-1)}{(N(1-\theta)+1+\theta(p-1))(p-1)+\gamma_2(N-p+1)},1\right) & \text{if }\theta\neq\frac{1}{N-p+1}+\frac{\gamma_2}{p-1} \\
m>1 & \text{if }\theta=\frac{1}{N-p+1}+\frac{\gamma_2}{p-1}.
\end{cases}
\end{equation}
Then any entropy solution to \eqref{pbmain} it is also a distributional solution.
\end{lemma}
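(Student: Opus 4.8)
The plan is to deduce the statement from Lemma~\ref{entdisgeneral}: once one knows that any entropy solution $u$ of \eqref{pbmain} satisfies $a(x,u,\nabla u)\in L^1_{\rm loc}(\Omega)^N$, that lemma gives at once that $u$ is a distributional solution. Since \eqref{cara2} (with $0\le\ell\in L^{\frac{p}{p-1}}(\Omega)\subset L^1(\Omega)$) is part of the structure of \eqref{pbmain}, we have $|a(x,u,\nabla u)|\le\beta(\ell(x)+u^{p-1}+|\nabla u|^{p-1})$, and therefore it suffices to show that $u\in L^{p-1}_{\rm loc}(\Omega)$ and $|\nabla u|\in L^{p-1}_{\rm loc}(\Omega)$; in fact the regularity already at our disposal will give both globally on $\Omega$. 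Thus the whole proof reduces to matching, case by case in $\theta$ and $m$, the Lebesgue/Marcinkiewicz estimates of Section~\ref{sec_reg} against the exponent appearing in \eqref{conddis}.

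First I would dispose of the cases in which the solution is bounded or of finite energy. If $m>\frac{N}{p}$, the boundedness lemma of Section~\ref{sec_reg} (which uses precisely \eqref{cara1}, \eqref{b}, \eqref{condent}) yields $u\in L^\infty(\Omega)$, so $u=T_M(u)\in\sob$ with $M=\|u\|_{L^\infty(\Omega)}$ and then $a(x,u,\nabla u)\in L^{\frac{p}{p-1}}(\Omega)^N$ by \eqref{cara2}. If $\theta=1+\frac{\gamma_2}{p-1}$, a short computation shows that the fraction in \eqref{conddis} has denominator $p(p-1)$, so \eqref{conddis} forces $m\ge\frac{N}{p}$, and Lemma~\ref{crit^2} then gives $u\in\sob\cap L^q(\Omega)$ for every $q<\infty$. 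Moreover, whenever the relevant regularity lemma below lands in its ``finite energy'' alternative $u\in\sob$, there is nothing left to prove. Thus we may assume we are in the genuinely unbounded regime $\theta<1+\frac{\gamma_2}{p-1}$ and $1\le m\le\frac{N}{p}$, the endpoint $m=\frac{N}{p}$ being harmless because $L^{\frac{N}{p}}(\Omega)\hookrightarrow L^{m'}(\Omega)$ for $m'<\frac{N}{p}$ arbitrarily close and, as one checks, the threshold in \eqref{conddis} is strictly below $\frac{N}{p}$ as soon as $\theta<1+\frac{\gamma_2}{p-1}$.

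In this remaining regime I would apply Lemma~\ref{lem_sum_m=1} when $m=1$ and Lemma~\ref{lem_sum_m>1} when $1<m<\frac{N}{p}$. For $m=1$: expanding the bound in \eqref{conddis} shows it is equivalent to $\theta<\frac{1}{N-p+1}+\frac{\gamma_2}{p-1}$ (the same inequality treated in Remark~\ref{rem1}), so either Lemma~\ref{lem_sum_m=1}$(i)$ gives $u\in\sob$, or Lemma~\ref{lem_sum_m=1}$(ii)$ gives $u\in M^t(\Omega)$, $|\nabla u|\in M^r(\Omega)$ with $r>p-1$ by Remark~\ref{rem1}, and with $t>p-1$ as well, since $t>p-1$ is equivalent to $\theta<\frac{p}{N}+\frac{\gamma_2}{p-1}$ and $\frac{1}{N-p+1}+\frac{\gamma_2}{p-1}<\frac{p}{N}+\frac{\gamma_2}{p-1}$. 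For $1<m<\frac{N}{p}$: either Lemma~\ref{lem_sum_m>1}$(i)$ gives $u\in\sob$, or Lemma~\ref{lem_sum_m>1}$(ii)$ gives $u^{q_1}\in L^1(\Omega)$ and $|\nabla u|^{q_2}\in L^1(\Omega)$ with $q_1=\frac{Nm((p-1)(1-\theta)+\gamma_2)}{N-mp}$ and $q_2=\frac{Nm((p-1)(1-\theta)+\gamma_2)}{N-m(\theta(p-1)+1-\gamma_2)}$; one then checks that $q_2\ge p-1$ is equivalent to $m$ satisfying exactly the bound in \eqref{conddis}, while $q_1\ge p-1$ is a strictly weaker requirement valid throughout $\theta<1+\frac{\gamma_2}{p-1}$, so both hold. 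Since $\Omega$ is bounded, $M^s(\Omega)\subset L^{p-1}(\Omega)$ whenever $s>p-1$ and $L^q(\Omega)\subset L^{p-1}(\Omega)$ whenever $q\ge p-1$; hence in all cases $u,|\nabla u|\in L^{p-1}(\Omega)$, and by \eqref{cara2} $a(x,u,\nabla u)\in L^1(\Omega)^N\subset L^1_{\rm loc}(\Omega)^N$, so Lemma~\ref{entdisgeneral} concludes. This also clarifies the dichotomy in \eqref{conddis}: at the intermediate threshold $\theta=\frac{1}{N-p+1}+\frac{\gamma_2}{p-1}$ one has $r=p-1$, and $M^{p-1}(\Omega)$ does not embed into $L^{p-1}(\Omega)$, so the Marcinkiewicz regularity available at $m=1$ is not enough and one must pass to $m>1$, which then produces genuine Lebesgue regularity $|\nabla u|\in L^{q_2}(\Omega)$ with $q_2>p-1$.

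The part I expect to be delicate is not the idea but the bookkeeping in the last step: one must keep track of which alternative of Lemma~\ref{lem_sum_m=1}/Lemma~\ref{lem_sum_m>1} occurs, of the two critical values of $\theta$ (the coercivity threshold $1+\frac{\gamma_2}{p-1}$ and the intermediate one $\frac{1}{N-p+1}+\frac{\gamma_2}{p-1}$), and then carry out the algebra showing that the rather opaque exponent in \eqref{conddis} is precisely the condition that the worse of the Marcinkiewicz (respectively Lebesgue) exponents of $u$ and $\nabla u$ be $\ge p-1$, with the borderline case forcing the transition from $m=1$ to $m>1$. Once this identification is in place, the rest is a routine assembly of the results of Section~\ref{sec_reg} with Lemma~\ref{entdisgeneral}.
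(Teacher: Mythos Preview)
Your proposal is correct and follows essentially the same route as the paper: reduce to Lemma~\ref{entdisgeneral} via \eqref{cara2}, and use the regularity lemmas of Section~\ref{sec_reg} (Lemma~\ref{lem_sum_m=1} with Remark~\ref{rem1}, Lemma~\ref{lem_sum_m>1}, Lemma~\ref{crit^2}) to ensure $u^{p-1},|\nabla u|^{p-1}\in L^1(\Omega)$. The paper organizes the case distinction by ranges of $\theta$ whereas you organize primarily by $m$ (and separately handle $m>\frac{N}{p}$ via the boundedness lemma, which the paper does not bother to isolate), but the content is the same; in fact you are slightly more explicit in checking $t>p-1$ and in noting that $q_1>q_2$, points the paper leaves implicit.
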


Note that
$$
\max\left(\tfrac{N(p-1)}{(N(1-\theta)+1+\theta(p-1))(p-1)+\gamma_2(N-p+1)},1\right)=1 \quad\text{if and only if}\quad 0\leq \theta\leq \frac{1}{N-p+1}+\frac{\gamma_2}{p-1}.
$$
Thus the exponents of summability for $f$ given in \eqref{conddis}  are continuous in $\theta= \frac{1}{N-p+1}+\frac{\gamma_2}{p-1}$. The only difference is that for $\theta=\frac{1}{N-p+1}+\frac{\gamma_2}{p-1}$ we need a bit more integrability than $f\in L^1(\Omega)$. Indeed, this value of $\theta$ is exactly the value for which we have $|\nabla u|\in M^{p-1}(\Omega)$ with $f\in L^1(\Omega)$, as observed in Remark \ref{rem1}. We also refer to Section \ref{radial} below for the optimality of the previous result.

\begin{proof}[Proof of Lemma \ref{entdis}]
Let $u$ be an entropy solution to \eqref{pbmain}. We divide in four cases depending on the range of $\theta$. \\
If $0\leq \theta <\frac{1}{N-p+1}+\frac{\gamma_2}{p-1}$, by \eqref{conddis}, we have $m\geq 1$, then it follows from Lemma \ref{lem_sum_m=1} and Remark \ref{rem1} that $u^{p-1}$ and $|\nabla u|^{p-1}$ belong to $L^1(\Omega)$. \\
If $\theta=\frac{1}{N-p+1}+\frac{\gamma_2}{p-1}$, by \eqref{conddis}, we have $m>1$. On the other hand, if $\frac{1}{N-p+1}+\frac{\gamma_2}{p-1}<\theta<1+\frac{\gamma_2}{p-1}$, again by \eqref{conddis}, we have $m\geq \tfrac{N(p-1)}{(N(1-\theta)+1+\theta(p-1))(p-1)+\gamma_2(N-p+1)}$. Then, in both cases, by Lemma \ref{lem_sum_m>1}, since $ \frac{Nm((p-1)(1-\theta)+\gamma_2)}{N-m(\theta(p-1)+1-\gamma_2)}\geq p-1$, we deduce again that $u^{p-1}$ and $|\nabla u|^{p-1}$ belong to $L^1(\Omega)$. \\
Finally, if $\theta=1+\frac{\gamma_2}{p-1}$, by \eqref{conddis}, we have $m\geq \frac{N(p-1)}{(N(1-\theta)+1+\theta(p-1))(p-1)+\gamma_2(N-p+1)}=\frac{N}{p}$ and so we can apply Lemma \ref{crit^2} obtaining $u\in \sob$. \\
In any cases, we have
\begin{equation*}
|a(x,u,\nabla u)|\stackrel{\eqref{cara2}}\leq \beta(\ell(x) + u^{p-1}+|\nabla u|^{p-1})\in L^1(\Omega).
\end{equation*}
Therefore, applying Lemma \ref{entdisgeneral}, the result follows.
\end{proof}

We conclude this section by specifying what we mean by a renormalized solution to \eqref{pbmain} and we show that this notion turns out to be equivalent with the notion of entropy solution whenever $f\in L^1(\Omega)$.

\begin{defin}
\label{renormalized}
A nonnegative measurable function $u$, which is almost everywhere finite in $\Omega$, is a \textit{renormalized solution} to problem \eqref{pbmain} if $T_k(u) \in W^{1,p}_0(\Omega)$ for every $k>0$ and if $a(x,T_k(u),\nabla T_k(u)) \in L^{\frac{p}{p-1}}(\Omega)^N$, $h(u)f S(u)\varphi \in L^1(\Omega)$,
\begin{equation}
\label{ren1}
\int_{\Omega} a(x,u,\nabla u)\cdot\nabla \varphi S(u) + \int_{\Omega}a(x,u,\nabla u)\cdot\nabla u S'(u)\varphi = \int_{\Omega}h(u)fS(u)\varphi
\end{equation}
for every $S \in W^{1,\infty}(\mathbb{R})$ with compact support and for every $\varphi \in W^{1,p}_0(\Omega)\cap L^\infty(\Omega)$,
and
\begin{equation}
\label{ren2}
\lim_{k\to \infty} \frac{1}{k}\int_{\{k< u< 2k\}} a(x,u,\nabla u)\cdot \nabla u \psi = 0
\end{equation}
for every $\psi\in C_b(\Omega)$ (i.e. the space of bounded continuous functions in $\Omega$).
\end{defin}

\begin{lemma}
\label{entequivren}
Let $a$ satisfy \eqref{cara1} and \eqref{cara2}. Let $h$ satisfy \eqref{h2} and let $f\in L^1(\Omega)$ be nonnegative. Then Definition \ref{defent} is equivalent to Definition \ref{renormalized}.
\end{lemma}
Let us just underline that, when proving Lemma \ref{enttoren} in the uniqueness section, we are showing nothing more than one of the requests of the Definition \ref{renormalized}.
\begin{proof}[Proof of Lemma \ref{entequivren}]
Let us assume that $u$ is an entropy solution. Firstly observe that it follows from Lemma \ref{enttoren} that \eqref{ren1} holds. Hence in order to prove that $u$ is a renormalized solution it is sufficient to show \eqref{ren2}. Note that $\nabla u\chi_{\{k<u<2k\}}=\nabla T_k(u-T_k(u))$, then taking $\varphi=T_k(u)$ with $k>1$ in \eqref{ent} we obtain
\begin{eqnarray*}
\frac{1}{k}\int_{\{k<u<2k\}} a(x,u,\nabla u)\cdot \nabla u\psi &\leq& \|\psi\|_{L^\infty(\Omega)}\frac{1}{k}\int_{\Omega} a(x,u,\nabla u)\cdot \nabla T_k(u-T_k(u)) \\
&\leq &  \|\psi\|_{L^\infty(\Omega)}\frac{1}{k} \int_{\Omega} h(u)f T_k(u-T_k(u)) \\
& \stackrel{\eqref{h2}}\leq &  \|\psi\|_{L^\infty(\Omega)} \left(\sup_{s\in [1,+\infty)}h(s)\right) \int_{\{u>k\}}f,
\end{eqnarray*}
for every $\psi \in C_b(\Omega)$. Recalling that $f\in L^1(\Omega)$, \eqref{ren2} follows. \\
Now let us assume that $u$ is a renormalized solution. Taking $\varphi=T_k(u-\phi)$ with $\phi\in W^{1,p}_0(\Omega)\cap L^{\infty}(\Omega)$ and $S(u)=V_m(u)$ ($V_m$ is defined in \eqref{Vdelta}) with $m> M:= k+\|\phi\|_{L^\infty(\Omega)}$ in \eqref{ren1}, we have, recalling that $\nabla T_k(u-\phi)=0$ where $u>M$, that
\begin{eqnarray}
\label{renentapp}
&& \int_\Omega a(x,u,\nabla u)\cdot\nabla T_k(u-\phi) -\frac{1}{m}\int_{\{m<u<2m\}}a(x,u,\nabla u)\cdot \nabla u T_k(u-\phi) \nonumber \\
&&= \int_{\{u<M\}} h(u)fT_k(u-\phi) + \int_{\{u\geq M\}} h(u)fS(u)T_k(u-\phi).
\end{eqnarray}
Now, thanks to \eqref{ren2}, the second term in the left-hand side of \eqref{renentapp} tends to $0$ as $m$ goes to infinity. As for the second term in the right-hand side of \eqref{renentapp} we have that $h(u)fS(u)T_k(u-\phi)\chi_{\{u\geq M\}}$ converges to $h(u)fT_k(u-\phi)\chi_{\{u\geq M\}}$ a.e. in $\Omega$ and $|h(u)fS(u)T_k(u-\phi)\chi_{\{u\geq M\}}|\leq k\left(\sup_{s\in [M,+\infty)}h(s)\right)f \in L^1(\Omega)$. Hence letting $m$ go to infinity, after an application of the Lebesgue Theorem, it follows from \eqref{renentapp} that
\begin{equation}
\label{renentapp2}
\int_\Omega a(x,u,\nabla u)\cdot\nabla T_k(u-\phi) = \int_{\Omega} h(u)fT_k(u-\phi),
\end{equation}
for every $\phi\in W^{1,p}_0(\Omega)\cap L^{\infty}(\Omega)$, i.e. \eqref{ent} with the equality sign. Moreover, since both $h(u)fS(u)T_k(u-\phi)$ and $h(u)fS(u)T_k(u-\phi)\chi_{\{u\geq M\}}$ belong to $L^1(\Omega)$, we deduce that $h(u)fT_k(u-\phi)\chi_{\{u< M\}}\in L^1(\Omega)$, which together to $h(u)fT_k(u-\phi)\chi_{\{u\geq M\}}\in L^1(\Omega)$ implies $h(u)fT_k(u-\phi)\in L^1(\Omega)$. Hence $u$ is an entropy solution.
\end{proof}

\section{Concluding remarks}
\label{s8}

In this final section we give some concluding remarks concerning several aspects of problem \eqref{pbintro}.

\subsection{Existence for any $\theta$ if $h$ touches zero}
Here we investigate a situation in which one could expect bounded solutions for any $\theta$, even if $\theta>1+\frac{\gamma_2}{p-1}$.

\smallskip

Let us consider problem \eqref{pbmain} where $h$ touches zero in some point. Precisely, let $\overline{s}$ be such that  $h(\overline{s}) =0$; under this condition one can prove the existence of a solution $u$ to \eqref{pbmain} such that $u\le \overline{s}$ almost everywhere in $\Omega$. Let us state and briefly prove the just mentioned result.
\begin{theorem}
Let $a$ satisfy \eqref{cara1}, \eqref{cara2}, \eqref{cara3} and \eqref{b}. Let $h$ satisfy \eqref{h1} with $\gamma_1\le 1$ and assume that there exists $\overline{s}$ such that $h(\overline{s})=0$. Let $f\in L^1(\Omega)$ be nonnegative. Then there exists a distributional solution $u$ to \eqref{pbmain} such that $u\le \overline{s}$ almost everywhere in $\Omega$.
\end{theorem}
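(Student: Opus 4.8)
The plan is to reduce the statement to Theorem~\ref{teo_ent} by cutting the nonlinearity $h$ off above its zero $\overline{s}$. We may assume $\overline{s}>0$ and $h(0)\neq 0$ (otherwise the null function is already a solution with the required bound). Define
$$
g(s):=h(s)\chi_{[0,\overline{s}]}(s),\qquad s\ge 0,
$$
that is $g(s)=h(s)$ for $0\le s\le \overline{s}$ and $g(s)=0$ for $s>\overline{s}$. Since $h(\overline{s})=0$, the function $g$ is continuous, nonnegative and finite outside the origin; it coincides with $h$ near $0$, so it satisfies \eqref{h1} with the same $\gamma_1\le 1$; and, since $g\equiv 0$ on $[\overline{s},\infty)$, it satisfies \eqref{h2} with $s_2=\overline{s}$ and \emph{any} exponent $\gamma_2$ (one may take $s_1<\overline{s}$). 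Choosing $\gamma_2$ so large that $\theta\le 1+\tfrac{\gamma_2}{p-1}$, all the hypotheses of Theorem~\ref{teo_ent} are met for the problem obtained by replacing $h$ with $g$ in \eqref{pbmain}, so there exists an entropy solution $u$ of $-\operatorname{div}(a(x,u,\nabla u))=g(u)f$ in $\Omega$.

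The next step is the a priori bound $u\le \overline{s}$ a.e.\ in $\Omega$. Since $u$ is an entropy solution, $T_{\overline{s}}(u)\in W^{1,p}_0(\Omega)\cap L^\infty(\Omega)$ by Definition~\ref{defent}, and we may take $\varphi=T_{\overline{s}}(u)$ in \eqref{ent}; then $u-\varphi=(u-\overline{s})^+$ and, for every $k>0$,
$$
\int_\Omega a(x,u,\nabla u)\cdot\nabla T_k((u-\overline{s})^+)\le \int_\Omega g(u)f\,T_k((u-\overline{s})^+).
$$
On $\{u>\overline{s}\}$ one has $g(u)=0$, so the right-hand side vanishes, while the left-hand side equals $\int_{\{0<u-\overline{s}<k\}}a(x,u,\nabla u)\cdot\nabla u$, which is nonnegative by \eqref{cara1}. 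Hence $a(x,u,\nabla u)\cdot\nabla u=0$ a.e.\ on $\{0<u-\overline{s}<k\}$; by \eqref{cara1} and \eqref{b} this forces $\nabla T_k((u-\overline{s})^+)=0$ a.e., so $T_k((u-\overline{s})^+)\equiv 0$ in $W^{1,p}_0(\Omega)$ for every $k>0$, i.e.\ $u\le \overline{s}$ a.e. (Equivalently, one obtains the same bound by testing the approximating problems \eqref{pbapprox} relative to $g$ with $(u_n-\overline{s})^+$ and letting $n\to\infty$, since $g_n\equiv 0$ on $[\overline{s},\infty)$.)

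Finally, since $0\le u\le \overline{s}$ we have $u=T_{\overline{s}}(u)\in W^{1,p}_0(\Omega)\cap L^\infty(\Omega)$ and $g(u)=h(u)$ a.e., so $u$ is a bounded, finite‑energy entropy solution of the \emph{original} problem \eqref{pbmain}. Moreover $a(x,u,\nabla u)=a(x,T_{\overline{s}}(u),\nabla T_{\overline{s}}(u))\in L^{\frac{p}{p-1}}(\Omega)^N\subset L^1_{\mathrm{loc}}(\Omega)^N$ by \eqref{cara2}; inserting $\varphi=u-\psi$ with $\psi\in C^1_c(\Omega)$ directly in \eqref{ent} (and then $-\psi$) yields $\int_\Omega a(x,u,\nabla u)\cdot\nabla\psi=\int_\Omega h(u)f\psi$, so $u$ is a distributional solution (alternatively one may invoke Lemma~\ref{entdisgeneral} applied to $g$, which satisfies \eqref{h2}). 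There is no real obstacle here: the crucial observation is precisely that the truncated nonlinearity $g$ satisfies \eqref{h2} with an arbitrarily large $\gamma_2$, which is what removes the restriction \eqref{condent} on $\theta$; the only genuine verification is the bound $u\le\overline{s}$, where one must use that $g$ (and each $g_n$) vanishes identically on $[\overline{s},\infty)$ so that the right-hand side disappears.
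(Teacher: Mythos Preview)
Your proof is correct and follows the same underlying idea as the paper's---truncate the nonlinearity above its zero $\overline{s}$ so that condition \eqref{h2} is satisfied with arbitrarily large $\gamma_2$, thereby removing the restriction \eqref{condent} on $\theta$---but the execution is somewhat different. The paper modifies the approximation scheme itself (setting $h_n(s)=0$ for $s>\overline{s}$), proves the bound $u_n\le\overline{s}$ at the approximate level via the test function $(1+T_n(u_n))^{\theta(p-1)}G_{\overline{s}}(u_n)$, and then reruns the arguments of Lemmas~\ref{lem_convqo}--\ref{stronconv}. You instead apply Theorem~\ref{teo_ent} as a black box to the modified nonlinearity $g$, and obtain the a~priori bound $u\le\overline{s}$ \emph{directly from the entropy inequality} by choosing $\varphi=T_{\overline{s}}(u)$. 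This is cleaner: it avoids reopening the approximation machinery, and the conclusion $u\in W^{1,p}_0(\Omega)\cap L^\infty(\Omega)$ together with $g(u)=h(u)$ immediately puts you in a position to invoke Lemma~\ref{entdisgeneral} (for $g$) or, as you note, to pass to the distributional formulation by taking $\varphi=u\pm\psi$ in \eqref{ent}. The paper's route has the minor advantage of showing the bound already holds for the approximants $u_n$, but your argument is a legitimate and more economical alternative.
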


\begin{proof}
	We give only a sketch of the proof. Even in this case  we work by approximation and the only difference with respect to the proof of Theorem \ref{teo_ent} is the estimates obtained on $u_n$. Here we slightly modify the approximation scheme \eqref{pbapprox}, namely $h_n(s):= T_n(h(s))$ if $s\le \overline{s}$ and $h_n(s):= 0$ if $s>\overline{s}$.
	Clearly, once again the existence of a weak solution $u_n\in\sob \cap L^\infty(\Omega)$ to \eqref{pbapprox} follows by \cite{ll}.
	\\
	In order to obtain the $L^\infty$-bound for $u_n$, we take $(1+T_n(u_n))^{\theta(p-1)}G_{\overline{s}}(u_n)$ as a test function in \eqref{pbapprox}, yielding to
	\begin{equation}\label{hzero}
		\alpha\int_\Omega |\nabla G_{\overline{s}}(u_n)|^p \le \int_{\{u_n>\overline s\}} h_n(u_n)f_n (1+T_n(u_n))^{\theta(p-1)}G_{\overline{s}}(u_n) =0,
	\end{equation}
since $h_n(u_n)=0$ in ${\{u_n>\overline s\}}$. Hence it follows from \eqref{hzero} that $u_n\le \overline{s}$ almost everywhere in $\Omega$. Now if one takes $(1+T_n(u_n))^{\theta(p-1)}u_n$ one simply gets that $u_n$ is bounded in $W^{1,p}_0(\Omega)$ with respect to $n$. From now on we can reason as in the proof of Theorem \ref{teo_ent} in order to show the existence of a bounded entropy solution to \eqref{pbmain}. Finally, since $u\in\sob$, thanks to Lemma \ref{entdisgeneral}, $u$ is a distributional solution.
\end{proof}

\subsection{The noncoercive case with model operator}

Let us highlight that existence of solution to \eqref{pbmain} in case of the model noncoercive operator $\displaystyle a(x,s,\xi)=\frac{|\xi|^{p-2}\xi}{(1+s)^{\theta(p-1)}}$ can be treat with a simple change of variable. \\
Indeed, let us consider the problem
\begin{equation}
\label{model1}
\begin{cases}
\displaystyle -\operatorname{div}\left(\frac{|\nabla u|^{p-2}\nabla u}{(1+u)^{\theta(p-1)}}\right) = \frac{f}{u^\gamma}  & \text{ in }\Omega, \\
u \ge 0 & \text{ in } \Omega,\\
u=0 & \text{ on } \partial\Omega,
\end{cases}
\end{equation}
with $\gamma>0$, and the following change of variable
\begin{equation*}
v= \Phi(u):=\int_{0}^{u} \frac{1}{(1+t)^\theta}\ dt.
\end{equation*}
Then one formally yields to
\begin{equation}
\label{model2}
\begin{cases}
\displaystyle -\Delta_p v = \frac{f}{\Phi^{-1}(v)^\gamma} & \text{ in }\Omega, \\
v \ge 0 & \text{ in } \Omega,\\
v=0 & \text{ on } \partial\Omega.
\end{cases}
\end{equation}	
After some calculations one can deduce that $\Phi^{-1}(s)^\gamma \sim s^\gamma$ as $s\to 0^+$. This equivalence allows to study the noncoercive problem \eqref{model1} once that one knows existence, uniqueness and regularity of solutions to problem \eqref{model2} which has been widely studied. \\
An important remark is that solutions to \eqref{model2} with $p=2$ and $f$ positive and regular enough belong to $H^1_0(\Omega)$ if and only if $\gamma <3$ (Theorem 2 of \cite{lm}). This result can be generalized for $1<p<N$ proving that a solution belongs to $\sob$ if and only if $\gamma< 2+\frac{1}{p-1}$. Hence we deduce that, if $\gamma\geq 2+\frac{1}{p-1}$ we cannot have solutions of \eqref{model1} with finite energy for every $f$.

\subsection{An instance of radial solution}
\label{radial}

Let $B_1(0) \subset \mathbb{R}^N$ be the $N$-dimensional ball ($N\ge 3$) of radius $1$ centered at the origin and let us consider
\begin{equation}\label{pbradial}
\begin{cases}
\displaystyle -\operatorname{div}\left(\frac{\nabla u}{(1+u)^\theta}\right)=\frac{C}{|x|^{N-\varepsilon}(1+u)^{\gamma_2}} & \text{ in }B_1(0), \\
u\geq 0 & \text{ in }B_1(0), \\
u=0 & \text{ on }\partial B_1(0),
\end{cases}
\end{equation}
for $\varepsilon>0$ and a suitable $C >0$. Note that $f\in L^m(\Omega)$ for every $1\leq m<\frac{N}{N-\varepsilon}$ and for every $\varepsilon>0$. Then a solution can be given as $u(x)= |x|^\alpha-1$ with $$\alpha = \frac{2+\varepsilon - N}{1-\theta +\gamma_2}.$$
\\
We note that $\alpha$ is negative thanks to the request  assumed throughout the paper, given by $\theta\le 1 +\gamma_2$ ($p=2$); in particular if $\theta = 1 +\gamma_2$ then $\alpha\to-\infty$ and the solution formally loses any Marcinkiewicz regularity (in agreement with the results of Section \ref{sec_reg}). After straightforward computations we obtain that $u\in W^{1,1}_0(\Omega)$ only if $\theta<\frac{1+\varepsilon}{N-1} +\gamma_2$. This suggests that finding solutions which, in general, do not have an integrable gradient is not a technical limit and, thus, the notion of entropy solution is more suitable to this kind of problems.  Furthermore, the solution to \eqref{pbradial} is clearly an entropy one and the upper limit of $\theta$ for which we have integrable gradient tends to $\frac{1}{N-1}+\gamma_2$ as $\varepsilon\to 0$ while $f$ loses its integrability. This shows that the threshold which appears in Lemma \ref{entdis} is optimal.

\subsection{The strongly singular case}

The aim of this section is to briefly highlight the main features of problem \eqref{pbmain} if $h$ satisfies \eqref{h1} with $\gamma_1>1$, i.e. in presence of a strongly singular $h$. In this case, once again through the scheme of approximation \eqref{pbapprox}, coherently to the case $\theta=0$, one reaches to solutions with truncations having only local finite energy.
Therefore, this means that the natural setting to work in is no more the \textit{entropy} framework, but the \textit{distributional} one.
\\
Let us give an idea that, if $u$ is a solution to \eqref{pbmain}, then $T_k(u) \in W^{1,p}_{\rm loc}(\Omega)$ for any $k>0$. Indeed one can formally multiply \eqref{pbmain} (which can be made rigorous at an approximation level by giving an analogue of Lemma \ref{lem_convqo}) by $(T_k(u)-k)\varphi^p$ ($0\le \varphi \in C^1_c(\Omega)$) and after an application of the Young inequality one gets
\begin{equation*}
	\alpha \int_\Omega \frac{|\nabla T_k(u)|^p\varphi^p}{(1+T_k(u))^{\theta(p-1)}} + 	p\int_\Omega a(x,T_k(u),\nabla T_k(u))\cdot \nabla \varphi (T_k(u)-k)\varphi^{p-1}\le 0,
\end{equation*}
where we used that $(T_k(u)-k)\varphi^p\le 0$ if $u<k$ and that $(T_k(u)-k)\varphi^p= 0$ if $u\ge k$, and we also employed \eqref{cara1} and \eqref{b}.
\\
Now, recalling \eqref{cara2}, one can apply the Young inequality obtaining that
\begin{equation*}
	\int_\Omega |\nabla T_k(u)|^p\varphi^p \le C,
\end{equation*}
where $C$ clearly depends on $k$.
\\
The other important difference between the case $\gamma_1\le 1$ and $\gamma_1> 1$ is how to give meaning to the boundary datum. As we have seen, since $h(u)u$ is not necessarily finite in zero, we are not able to deduce that $u$ is zero in the usual sense of the Sobolev trace as for the case $\gamma_1\le1$. On the other hand, one can observe that $h(u)u^{\gamma_1}$ is finite in zero and this allows to show that $T_k^{\frac{\gamma_1-1+p}{p}}(u)$ belongs to $W^{1,p}_0(\Omega)$ for any $k>0$, which is the weaker way in which we mean the Dirichlet datum in this case.
\\
Let us also highlight that also Lemmas \ref{convgradienti} and \ref{stronconv} can be suitably adapted by localizing the proof.
\\
For the sake of completeness we state the existence result for this case. As already said, the aim is proving the existence of a distributional solution; therefore, coherently to the case $\gamma_1\le1$, one needs to require some regularity on $f$ in order to deduce that $a(x,u,\nabla u)$ is locally integrable (cf. Lemma \ref{entdis}).
\\
The existence theorem can be stated as follows.

\begin{theorem}
Let $a$ satisfy \eqref{cara1}, \eqref{cara2}, \eqref{cara3}, \eqref{b} and \eqref{condent}. Let $h$ satisfy \eqref{h1} and \eqref{h2}. Let $f\in L^m(\Omega)$ be nonnegative such that \eqref{conddis} holds.
Then there exists a measurable nonnegative function $u$ such that $T_k^{\frac{\gamma_1-1+p}{p}}(u)$ belongs to $W^{1,p}_0(\Omega)$ for any $k>0$ and
	\begin{gather}
		a(x,u,\nabla u) \in L^{1}_{\rm loc}(\Omega)^N,\quad h(u)f\in L^1_{\rm loc}(\Omega),  \nonumber\\
		\int_{\Omega} a(x,u,\nabla u)\cdot\nabla \varphi = \int_{\Omega}h(u)f \varphi \quad \forall \varphi \in C^1_c(\Omega).\nonumber
	\end{gather}
\end{theorem}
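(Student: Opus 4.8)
The plan is to follow the approximation scheme of Theorem \ref{teo_ent}, the essential new feature being that for $\gamma_1>1$ the approximating solutions can only be controlled \emph{locally}. Let $u_n\in\sob\cap\linf$ be the nonnegative weak solution to \eqref{pbapprox} provided by \cite{ll}, with $h_n(s)=T_n(h(s))$ and $f_n=T_n(f)$. First I would collect the a priori estimates. Adapting Lemma \ref{lem_convqo}, but choosing the test functions (of the type $\log(1+T_k(u_n))(1+u_n)^{\gamma_2}$, and using \eqref{condent}) so that they are supported away from the origin, where $h$ is now too singular for the original argument, one still controls the measure of the superlevel sets $\{u_n\ge k\}$ uniformly in $n$; hence $u_n$ is a Cauchy sequence in measure and converges almost everywhere to a nonnegative, almost everywhere finite function $u$. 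Next, testing \eqref{pbapprox} with $(T_k(u_n)-k)\varphi^p$, $0\le\varphi\in C^1_c(\Omega)$, and using \eqref{cara1}, \eqref{cara2}, \eqref{b} and the Young inequality as sketched in the excerpt, one gets that $T_k(u_n)$ is bounded in $W^{1,p}_{\mathrm{loc}}(\Omega)$ uniformly in $n$, so that $T_k(u)\in W^{1,p}_{\mathrm{loc}}(\Omega)$ and $T_k(u_n)\rightharpoonup T_k(u)$ locally in $W^{1,p}$. Finally, the boundary condition is recovered by exploiting that $h(s)s^{\gamma_1}$ is bounded near the origin: testing \eqref{pbapprox} with $T_k(u_n)^{\gamma_1}$ (an admissible choice since $u_n\in\linf$), the left-hand side is bounded below, via \eqref{cara1} and \eqref{b}, by $c\int_\Omega|\nabla(T_k(u_n)^{\frac{\gamma_1-1+p}{p}})|^p$, while the part of the right-hand side on $\{u_n\le s_1\}$ is $\le c_1\|f\|_{\luno}$ by \eqref{h1} and the part on $\{u_n>s_1\}$ is bounded since $h$ is bounded there; hence $T_k(u_n)^{\frac{\gamma_1-1+p}{p}}$ is bounded in $\sob$, and $T_k(u)^{\frac{\gamma_1-1+p}{p}}\in\sob$ for every $k>0$.

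The second step is to upgrade these convergences. Repeating the arguments of Lemma \ref{convgradienti} and Lemma \ref{stronconv} with every test function there multiplied by a cut-off $\varphi\in C^1_c(\Omega)$, with the global energy bounds replaced by the local ones just obtained, and using monotonicity \eqref{cara3}, one proves that $\nabla u_n\to\nabla u$ almost everywhere in $\Omega$ and that $T_k(u_n)\to T_k(u)$ strongly in $W^{1,p}_{\mathrm{loc}}(\Omega)$ for every $k>0$; the additional terms carrying $\nabla\varphi$ are of lower order and are absorbed through \eqref{cara2} and the local weak convergence. Along the way one also obtains, exactly as in \eqref{l1locddo}, that $\int_{\operatorname{supp}\varphi}h_n(u_n)f_n\le C$ for every nonnegative $\varphi\in C^1_c(\Omega)$, which is the local substitute for property \eqref{ent0}.

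With these tools one runs, in localized form, the regularity estimates of Section \ref{sec_reg}: testing \eqref{pbapprox} with $((1+T_\sigma(u_n))^\eta-1)\varphi^p$ and using the bound $\int_{\operatorname{supp}\varphi}h_n(u_n)f_n\le C$ to handle the region where $u_n$ is small (this is precisely where working locally rather than globally is unavoidable), one recovers, under \eqref{conddis}, the local analogues of Lemma \ref{lem_sum_m=1}, Lemma \ref{lem_sum_m>1} and Lemma \ref{crit^2} (all of which use \eqref{condent}). Since \eqref{conddis} is calibrated so that the Marcinkiewicz index of $|\nabla u|$ is at least $p-1$ (cf. the discussion after Lemma \ref{entdis}), this gives $u^{p-1},|\nabla u|^{p-1}\in L^1_{\mathrm{loc}}(\Omega)$ and hence, by \eqref{cara2} and $\ell\in L^{\frac{p}{p-1}}(\Omega)$, that $a(x,u,\nabla u)\in L^1_{\mathrm{loc}}(\Omega)^N$. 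It then remains to pass to the limit in the weak formulation of \eqref{pbapprox} tested against $\psi\in C^1_c(\Omega)$: on the principal-operator side one argues as in the proof of Theorem \ref{teo_ent}, using the almost everywhere convergence of the gradients together with the local strong convergence of the truncations and a Vitali argument; on the lower-order side one splits into $\{u_n>\delta\}$, where $h$ is bounded and the Lebesgue theorem applies, and $\{u_n\le\delta\}$, whose contribution is made uniformly small by a $V_\delta(u_n)$-type test function as in \eqref{stimavdeltaddo}, using that $h(u)f\in L^1_{\mathrm{loc}}(\Omega)$ (which follows a posteriori from the limit equation, as in Lemma \ref{entdisgeneral}) and consequently $\{u=0\}\subset\{f=0\}$ up to a null set. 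The main obstacle is exactly this last passage to the limit in the singular term: for $\gamma_1>1$ the sequence $h_n(u_n)f_n$ is only \emph{locally} equi-integrable, and one must exclude concentration on $\{u=0\}$; the decisive point is that the estimate for $T_k(u)^{\frac{\gamma_1-1+p}{p}}$ forces $f$ to vanish almost everywhere where $u$ does, so that no mass escapes there.
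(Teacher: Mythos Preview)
Your proposal is correct and follows exactly the approach sketched in the paper: local $W^{1,p}$ bounds on truncations via the test function $(T_k(u_n)-k)\varphi^p$, the global boundary datum via $T_k(u_n)^{\gamma_1}$, localized versions of Lemmas \ref{convgradienti} and \ref{stronconv}, and the localized regularity estimates of Section \ref{sec_reg} under \eqref{conddis} to secure $a(x,u,\nabla u)\in L^1_{\rm loc}(\Omega)^N$. One minor correction: in your last paragraph you derive $h(u)f\in L^1_{\rm loc}(\Omega)$ (and hence $\{u=0\}\subset\{f=0\}$) ``a posteriori from the limit equation'', but this is circular since that inclusion is precisely what you need in order to pass to the limit in the singular term; the correct source is the one you already isolated earlier, namely Fatou applied to the localized bound $\int_{\operatorname{supp}\varphi}h_n(u_n)f_n\le C$ (the local analogue of \eqref{l1locddo}), which gives $h(u)f\in L^1_{\rm loc}(\Omega)$ \emph{before} any limit in the weak formulation is taken.
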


\subsection*{Acknowledgements} The first author gratefully thanks the Department of Mathematics ``Ulisse Dini'' of the University of Florence for having partially supported this work with a Research Grant. The authors have been partially supported by GNAMPA of INdAM (Project ``Analisi di fenomeni di wetting in presenza di potenziali singolari'') and by PON Ricerca e Innovazione 2014-2020.

\end{document}